\documentclass[11pt,english,british]{article}
\usepackage[T1]{fontenc}
\usepackage[latin9]{inputenc}
\usepackage{geometry}
\geometry{verbose,tmargin=1in,bmargin=1in,lmargin=1in,rmargin=1in}
\usepackage{mathrsfs}
\usepackage{amsmath}
\usepackage{amsthm}
\usepackage{amssymb}
\usepackage{graphicx}

\makeatletter

\providecommand{\tabularnewline}{\\}

\theoremstyle{plain}
\newtheorem{thm}{\protect\theoremname}[section]
\theoremstyle{plain}
\newtheorem{lem}[thm]{\protect\lemmaname}
\theoremstyle{remark}
\newtheorem{rem}[thm]{\protect\remarkname}
\theoremstyle{plain}
\newtheorem{prop}[thm]{\protect\propositionname}
\theoremstyle{definition}
\newtheorem{example}[thm]{\protect\examplename}

\usepackage[affil-it]{authblk}

\author[*]{Chunrong Feng} 
\author[*,**]{Huaizhong Zhao} 
\author[***]{Johnny Zhong}
\affil[*] {Department of Mathematical Sciences, Durham
University, DH1 3LE, UK}
\affil[**]{Research Centre for Mathematics and Interdisciplinary Sciences, Shandong University, Qingdao 266237, China}
\affil[***]{Department of Mathematical Sciences, Loughborough
University, LE11 3TU, UK}
\affil[ ]{chunrong.feng@durham.ac.uk, huaizhong.zhao@durham.ac.uk, J.Zhong@lboro.ac.uk}
\date{}

\newlength{\bibitemsep}\setlength{\bibitemsep}{.0\baselineskip plus .05\baselineskip minus .05\baselineskip}
\newlength{\bibparskip}\setlength{\bibparskip}{0pt}
\let\oldthebibliography\thebibliography
\renewcommand{\thebibliography}[1]{%
  \oldthebibliography{#1}%
  \setlength{\parskip}{\bibitemsep}%
  \setlength{\itemsep}{\bibparskip}%
}

\raggedbottom

\usepackage{babel}
\providecommand{\examplename}{Example}
\providecommand{\lemmaname}{Lemma}
\providecommand{\propositionname}{Proposition}
\providecommand{\remarkname}{Remark}
\providecommand{\theoremname}{Theorem}

\makeatother

\usepackage{babel}
\addto\captionsbritish{\renewcommand{\examplename}{Example}}
\addto\captionsbritish{\renewcommand{\lemmaname}{Lemma}}
\addto\captionsbritish{\renewcommand{\propositionname}{Proposition}}
\addto\captionsbritish{\renewcommand{\remarkname}{Remark}}
\addto\captionsbritish{\renewcommand{\theoremname}{Theorem}}
\addto\captionsenglish{\renewcommand{\examplename}{Example}}
\addto\captionsenglish{\renewcommand{\lemmaname}{Lemma}}
\addto\captionsenglish{\renewcommand{\propositionname}{Proposition}}
\addto\captionsenglish{\renewcommand{\remarkname}{Remark}}
\addto\captionsenglish{\renewcommand{\theoremname}{Theorem}}
\providecommand{\examplename}{Example}
\providecommand{\lemmaname}{Lemma}
\providecommand{\propositionname}{Proposition}
\providecommand{\remarkname}{Remark}
\providecommand{\theoremname}{Theorem}

\begin{document}
\title{Expected Exit Time for Time-Periodic Stochastic Differential Equations
and Applications to Stochastic Resonance}

\maketitle
\renewcommand{\labelenumi}{(\roman{enumi})}

\numberwithin{equation}{section}
\begin{abstract}
In this paper, we derive a parabolic partial differential equation
for the expected exit time of non-autonomous time-periodic non-degenerate
stochastic differential equations. This establishes a Feynman-Kac
duality between expected exit time of time-periodic stochastic differential
equations and time-periodic solutions of parabolic partial differential
equations. Casting the time-periodic solution of the parabolic partial
differential equation as a fixed point problem and a convex optimisation
problem, we give sufficient conditions in which the partial differential
equation is well-posed in a weak and classical sense. With no known
closed formulae for the expected exit time, we show our method can
be readily implemented by standard numerical schemes. With relatively
weak conditions (e.g. locally Lipschitz coefficients), the method
in this paper is applicable to wide range of physical systems including
weakly dissipative systems. Particular applications towards stochastic
resonance will be discussed.

\textbf{Keywords: }expected exit time; first passage time; time-inhomogeneous
Markov processes; Feynman-Kac duality; stochastic resonance; locally
Lipschitz; time-periodic parabolic partial differential equations. 
\end{abstract}

\section{Introduction}

In many disciplines of sciences, (expected) exit time of stochastic
processes from domains is an important quantity to model the (expected)
time for certain events to occur. For example, time for chemical reactions
to occur \cite{Kramers,Gardiner_StochMethods,Zwanzig}, biological
neurons to fire \cite{Neurons_By_OU,Neurons_By_OU2}, companies to
default \cite{BlackCoxFPM,CreditRisk_FPM}, ions crossing cell membranes
in molecular biology \cite{FirstPassage_Biology} are all broad applications
of exit times. For autonomous stochastic differential equations (SDEs),
the expected exit time from a domain has been well-studied in existing
literature. In particular, it is well-known that the expected exit
time satisfies a second-order linear elliptic partial differential
equation (PDE) \cite{Hasminskii,Gardiner_StochMethods,Zwanzig,Pavliotis_LangevinTextbook,RiskenFP}.
However, in existing literature, it appears that the expected exit
time PDE is absent for non-autonomous SDEs and in particular time-periodic
SDEs. Our novel contribution is the rigorous derivation of a second-order
linear parabolic PDE obeyed by the expected exit time of time-periodic
SDEs as its time-periodic solution. This establishes a Feynman-Kac
duality for time-periodic SDEs for the expected duration. We expect
that our approach and this duality go beyond this current paper to
derive similar parabolic PDEs for other quantities associated to time-periodic
SDEs. Conversely, we expect this duality provides stochastic insight
into existing time-periodic solutions of parabolic PDEs. In this paper,
we also discuss briefly the ill-posedness of the PDE for the general
non-autonomous SDE case and thereby explaining its absence in literature.

With typical and relatively weak SDE conditions e.g. non-degenerate
diffusion and existence of continuous Markov transition density, the
PDE can be rigorously derived. In the interest of many physical systems,
we show that the results readily apply to weakly dissipative SDEs.
The conditions required to solve the PDE are weaker than that to derive
the PDE from the SDE. This is expected because from a PDE perspective,
weak solutions of PDE on bounded domains can often be attained requiring
coefficients to only be $L^{p}$ or Hölder; and classical solutions
may be obtained via Sobolev embedding. On the other hand, as a priori,
it is not known if the process would exit the bounded domain in finite
time or indeed have finite expectation. By considering the SDE and
its Markov transition probability on the entire unbounded domain,
we show that if the exit time has finite second moment then the PDE
derivation can be rigorously justified. We show that irreducibility
and the strong Feller property of the Markov transition probability
are the key ingredients to conclude the exit time has finite second
moment. While it is well-known that the strong Feller property holds
provided the coefficients are globally Hölder and bounded with uniformly
elliptic diffusion \cite{Friedman_ParabolicPDE,Stroock_Varadhan_MDP},
these conditions are too restrictive for the applications from a SDE
perspective. The celebrated Hörmander's condition is a weak condition
to deduce the strong Feller property for autonomous SDE \cite{HormanderBooks,Malliavin_Hypo,Hairer_Hormander}.
In the recent paper \cite{HopfnerLocherbachThieullenTimeDepLocalHormanders},
the authors extended Hörmander's condition to sufficiently imply the
strong Feller property holds for non-autonomous SDEs. The smoothness
SDE conditions of this paper is to invoke the result of \cite{HopfnerLocherbachThieullenTimeDepLocalHormanders}
while flexible enough for applications.

We provide two complementary approaches to prove that the parabolic
PDE has a unique solution in a weak and classical sense. In the proofs,
we keep as much generality as convenient to show the main ingredients
for the well-posedness of the PDE and for straightforward application
to similar problems. In one approach, we show that the time-periodic
solution can be casted as a fixed point of the parabolic PDE evolution
operator after a period. We prove that if the associated bilinear
form is coercive, then the time-periodic solution exists and is unique
by Banach Fixed Point Theorem. As coercivity can be difficult to verify
in practice, we also take a calculus of variation approach. Specifically,
we cast the problem as a convex optimisation problem by defining a
natural cost functional and show that a unique minimiser exists and
satisfies the PDE.

We emphasise that while our core results are theoretical in nature,
the Banach fixed point and convex optimisation approach can be readily
implemented by standard numerical schemes. Acquiring the tools to
numerically compute the expected exit time is vital because explicit
or even approximate closed form formulas for the expected time are
rarely known, even in the autonomous case. The known cases include
(autonomous) one-dimensional gradient SDEs with additive noise, where
the expected exit time can be expressed as a double integral \cite{Gardiner_StochMethods}
and has an approximate closed form solution given by Kramers' time,
when the noise is small \cite{Kramers}. Kramers' time has since been
extended to higher dimensional gradient SDEs \cite{Ber11}. However,
to our knowledge, there are currently no-known exact formulae for
the time-periodic case. Therefore, particularly for applications,
there is an imperative to solving the PDE numerically.

This paper together with the periodic measure concept provides a novel
mathematical approach to stochastic resonance, a phenomena that we
now briefly describe. In a series of papers \cite{BenziStochRes81,BenziStochRes,BenziStochRes83,NicolisPeriodicForcing},
the paradigm of stochastic resonance was introduced to explain Earth's
cyclical ice ages. In particular, the authors proposed a double-well
potential SDE with periodic forcing to model the scientific observation
that Earth's ice age transitions from ``cold'' and ``warm'' climate
occurs abruptly and almost regularly every $10^{5}$ years. The wells
model the two metastable states, where the process typically stays
at for large amount of time. The periodic forcing corresponds to the
annual mean variation in insolation due to changes in ellipticity
of the earth's orbit, while noise stimulates the global effect of
relatively short-term fluctuations in the atmospheric and oceanic
circulations on the long-term temperature behaviour. In the absence
of noise (with or without periodic forcing), these models do not produce
transition between the two metastable states. Similarly, in the absence
of periodic forcing, while the noise induces transitions between the
stable states, the transitions are not periodic. It is the delicate
interplay between periodicity and noise that explains the transitions
between the metastable states to be periodic. Since the seminal papers,
stochastic resonance has found applications in many physical systems
including optics, electronics, neuronal systems, quantum systems amongst
other applications \cite{GHJM98,JungHanggi,ZhouMossJung,JungPerioidcallyDrivenSys,HerrmannImkellerPavly,Lon93}.

The concept of periodic measures and ergodicity introduced in \cite{CFHZ2016}
provides a rigorous framework and new insight for understanding such
physical phenomena. Indeed, in \cite{FZZ19}, broad classes of SDEs
were shown to possess a unique geometric periodic measure and specifically
shown to apply double-well potential SDEs. The uniqueness of geometric
periodic measure implies transition between the wells\textbf{ }occurs
\cite{FZZ19}. While there is no standard definition \cite{JungHanggi,HerrmannImkeller},
stochastic resonance is said to occur if the expected transition time
between the metastable states is (roughly) half the period \cite{CherubiniStochRes}.
Indeed, the transition time between the wells is a special case of
exit time. Applying the theory developed in this paper, we show that
computationally solving the PDE and stochastic simulation for the
expected transition time agrees. We then fine tune the noise intensity
until the system exhibit stochastic resonance. Our PDE results also
show the transition between cold and ward climates is indeed very
abrupt when regime change happens.

Existing stochastic resonance literature often utilise Kramers' time,
note however that Kramers' time applies only to autonomous gradient
SDE case and in the small noise limit. For example in \cite{MW89,SR2state}
reduced the dynamics to \textquotedbl effective dynamics\textquotedbl{}
two-state time-homogeneous Markov process and invoked a time-perturbed
Kramers' time. More generally, utilising large deviation and specifically
Wentzell\textendash Freidlin theory \cite{FW98}, stochastic resonance
and related estimates can be attained in the small noise limit. For
example, \cite{MS01} attained estimates for escape rates, a closely
related quantity to expected transition time. Similarly, in \cite{IP01}
and \cite{HerrmannImkeller,HerrmannImkellerPavly,HerrmannSmallNoise},
the authors obtained estimates for the noise intensity for stochastic
resonance by reducing to two-state Markov process and time-independent
bounds respectively. In this paper, we retain the time-dependence
of the coefficients and furthermore, small and large noise are permissible.
In fact, the noise can even be state-dependent and exact exit time
duration is obtained.

\section{Expected Exit Time and Duration}

\subsection{Introduction}

Consider a stochastic process $(X_{t})_{t\geq s}$ on $\mathbb{R}^{d}$
with continuous sample-paths and an open non-empty (possibly unbounded)
domain $D\subset\mathbb{R}^{d}$ with boundary $\partial D$. Without
loss of generality, we assume throughout this paper that $D$ is connected.
Indeed if $D$ is disconnected, one can solve separately on each connected
subset. We define the first exit time from the domain $D$ (or first
passage time or first hitting time to the boundary) by 
\begin{align}
\eta_{D}(s,x) & :=\inf_{t\geq s}\{X_{t}\notin D|X_{s}=x\}=\inf_{t\geq s}\{X_{t}\in\partial D|X_{s}=x\},\label{eq:FirstExitTime}
\end{align}
where $x\in D$ and the equality holds by sample-path continuity.
We let $\eta_{D}(s,x)=\infty$ if $X_{t}$ never exits $D$. While
the absolute time in (\ref{eq:FirstExitTime}) is important, it is
mathematically convenient and practically useful to study instead
the exit duration 
\begin{align}
\tau_{D}(s,x) & :=\eta_{D}(s,x)-s\label{eq:Tau_Eta}
\end{align}
directly. As $D$ is generally fixed, where unambiguous, we omit the
subscript $D$ i.e. $\eta(s,x)=\eta_{D}(s,x)$ and $\tau(s,x)=\tau_{D}(s,x)$.
By Début theorem, $\eta(s,x)$ is both a hitting time and a stopping
time. In general, $\tau(s,x)$ is not. Thus some proofs and computations
will be first done for $\eta(s,x)$, then related to $\tau(s,x)$
via (\ref{eq:Tau_Eta}). In this paper, we are interested in their
expectations 
\begin{equation}
\bar{\eta}(s,x):=\mathbb{E}[\eta(s,x)],\quad\bar{\tau}(s,x):=\mathbb{E}[\tau(s,x)].\label{eq:TauExpected}
\end{equation}
In conventional notation, one typically writes $\bar{\eta}=\mathbb{E}^{s,x}[\eta]$
and $\bar{\tau}=\mathbb{E}^{s,x}[\tau]$. For subsequent proofs, it
is often more convenient that we keep the explicit dependence on the
random variables.

In this paper, we are specifically interested in the expected exit
and duration time for $T$-periodic non-degenerate SDEs on $\mathbb{R}^{d}$
of the form 
\begin{equation}
\begin{cases}
dX_{t}=b(t,X_{t})dt+\sigma(t,X_{t})dW_{t}, & t\geq s,\\
X_{s}=x, & x\in D,
\end{cases}\label{eq:GeneralSDE_NonAuton}
\end{equation}
where $W_{t}$ is a $d$-dimensional Brownian motion on a probability
space $(\Omega,\mathcal{F},\mathbb{P})$, $b\in C(\mathbb{R}\times\mathbb{R}^{d},\mathbb{R}^{d})$
and $\sigma\in C(\mathbb{R}\times\mathbb{R}^{d},\mathbb{R}^{d\times d})$
are $T$-periodic i.e. 
\[
b(t,\cdot)=b(t+T,\cdot),\quad\sigma(t,\cdot)=\sigma(t+T,\cdot),
\]
such that a unique solution $X_{t}=X_{t}^{s,x}$ exist. To avoid triviality,
we always assume the coefficients collectively have a minimal period
i.e. at least one of the coefficients have a minimal period.

When a unique solution of (\ref{eq:GeneralSDE_NonAuton}) exists,
one can define the Markovian transition probability 
\begin{equation}
P(s,t,x,\Gamma):=\mathbb{P}(X_{t}\in\Gamma|X_{s}=x),\quad s\leq t,\Gamma\in\mathcal{B}(\mathbb{R}^{d}).\label{eq:MarkovTransitionProb}
\end{equation}
If SDE (\ref{eq:GeneralSDE_NonAuton}) is $T$-periodic, then it is
straightforward to show that 
\begin{equation}
P(s,t,x,\cdot)=P(s+T,t+T,x,\cdot),\quad s\leq t.\label{eq:MarkovTransition_ModT}
\end{equation}

We refer to SDEs as non-autonomous when there is an explicit time-dependence,
periodic or otherwise. When the SDE coefficients are time-independent
i.e. $b(t,\cdot)=b(\cdot)$ and $\sigma(t,\cdot)=\sigma(\cdot)$,
then the SDE (\ref{eq:GeneralSDE_NonAuton}) is said to be autonomous.
It is well-known that for autonomous SDEs, the expected exit time
and expected duration coincide \cite{Gardiner_StochMethods,Pavliotis_LangevinTextbook,Zwanzig}.
Denoting both the expected exit and duration time by $\bar{\tau}(x)$,
it is moreover known that $\bar{\tau}(x)$ satisfies the following
second-order elliptic PDE with vanishing boundaries \cite{Hasminskii,Gardiner_StochMethods,Pavliotis_LangevinTextbook,Zwanzig,RiskenFP}
\begin{equation}
\begin{cases}
L\bar{\tau}=-1, & \text{in }D,\\
\bar{\tau}=0 & \text{on }\partial D,
\end{cases}\label{eq:ExpectedHittingPDEAuton}
\end{equation}
where 
\begin{equation}
Lf=\sum_{i=1}^{d}b^{i}(x)\partial_{i}f(x)+\frac{1}{2}\sum_{i,j=1}^{d}a^{ij}(x)\partial_{ij}^{2}f(x),\quad f\in C_{0}^{2}(\mathbb{R}^{d}),\label{eq:AutonStochGenerator}
\end{equation}
is the usual infinitesimal stochastic generator with the conventional
notation $\partial_{i}=\partial_{x_{i}}$ and $a(x)=(\sigma\sigma^{T})(x)$.

For non-autonomous SDEs however, due to the explicit dependence on
time, expected exit time and expected duration no longer coincide.
That is, $\bar{\tau}(s,x)$ generally depends on both initial time
and initial state. In this non-autonomous case, we write explicitly
the time-dependence and define the stochastic infinitesimal generator
of (\ref{eq:GeneralSDE_NonAuton}) by 
\begin{equation}
L(s)f(x)=\sum_{i=1}^{d}b^{i}(s,x)\partial_{i}f(x)+\frac{1}{2}\sum_{i,j=1}^{d}a^{ij}(s,x)\partial_{ij}^{2}f(x),\quad f\in C_{0}^{2}(\mathbb{R}^{d}),\label{eq:SpatialGenerator_fixed_s}
\end{equation}
and its adjoint (on $C_{0}^{2}(\mathbb{R}^{d})$), the Fokker-Planck
operator by 
\begin{equation}
L^{*}(s)f(x)=-\sum_{i=1}^{d}\partial_{i}(b^{i}(s,x)f(x))+\frac{1}{2}\sum_{i,j=1}^{d}\partial_{ij}\left(a^{ij}(s,x)f(x)\right),\quad f\in C_{0}^{2}(\mathbb{R}^{d}).\label{eq:FokkerPlanckGeneral}
\end{equation}

It is easy but important to see that for non-autonomous SDEs, $\bar{\tau}(s,x)$
does not satisfy ($\ref{eq:ExpectedHittingPDEAuton}$) even if $L$
is replaced by $L(s)$. We note also one approach is to consider lifted
coordinates $(t,X_{t}^{s,x})$. This autonomisation approach was noticed
by two of the authors in their work \cite{CFHZ2016}, where the terminology
\textquotedblleft lift\textquotedblright{} was used. In fact, the
lift leads to Markovian RDS (Random Dynamical System) cocycle and
homogeneous Markov semigroup. This allows us to obtain an invariant
measure from the periodic measure. The operator $\partial_{s}+L(s)$
appeared naturally as the infinitesimal generator of the lifted semigroup
and its spectral structure was also analysed to have an infinite number
of equally placed pure imaginary eigenvalues. This very interesting
phenomena is due to the degeneracy of $\partial_{s}+L(s)$ and periodic
boundary condition. This agrees with their spectral structure of ergodic
periodic measures of homogeneous Markov semigroup studied in the first
part of the paper which was already published in \cite{CFHZ2020}.
The lifting was heavily used in our earlier paper \cite{FZZ19}, where
we discussed Fokker-Planck equation for the density of periodic measure.
However for the problem we consider in this paper, we need to come
back to the spatial state space due to technical challenges that arises
when applying the techniques of autonomous system directly to the
lifted coordinates $(t,X_{t})$. We briefly survey these issues below.

We note that while the Markov transition probability of lifted coordinates
$(t,X_{t})$ is time-homogeneous, it immediately loses other important
properties such as irreducibility and strong Feller property. This
means classical results are not applicable. For instance, it is easy
to show the lifted process can never be ergodic. Some papers such
as \cite{BDE01} observed that $(t,X_{t})$ is time-homogeneous and
applied (\ref{eq:ExpectedHittingPDEAuton}) with operator $\partial_{s}+L(s)$.
However, upon close inspection of the derivation of (\ref{eq:ExpectedHittingPDEAuton})
from texts including \cite{Hasminskii,Gardiner_StochMethods,Pavliotis_LangevinTextbook,Zwanzig,RiskenFP},
we note that (\ref{eq:ExpectedHittingPDEAuton}) was derived only
for autonomous systems, rather than non-autonomous systems.

Directly applying the autonomous results for non-autonomous SDEs with
lifted coordinates $(t,X_{t})$ leads to technical boundary conditions
issues. For instance, from a SDE perspective, while (\ref{eq:FirstExitTime})
is well-defined in spatial coordinates, it is not immediately clear
how to define exit time in $(t,X_{t})$ coordinates. Possibly the
most natural choice is $\pi_{x}(t,X_{t})$, where $\pi_{x}$ is the
spatial projection. This adds unnecessary complexity and leads to
further issues from a PDE perspective. Specifically, suppose we can
extend the classic result from \cite{Hasminskii,Gardiner_StochMethods,Pavliotis_LangevinTextbook,Zwanzig,RiskenFP}
directly to $(t,X_{t})$, then it suggests we replace the boundary
conditions of ($\ref{eq:ExpectedHittingPDEAuton}$) with $u(s,x)=0$
on $\partial(\mathbb{R^{+}}\times D)=\left(\{0\}\times D\right)\cup\left(\mathbb{R}^{+}\times\partial D\right)$.
This is problematic. Treated as a parabolic PDE, this suggest that
if the process starts at time $s=0$, then for any $x\in D$, the
exit time to leave the domain is zero. This cannot be the case. If
persisted to interpreted as an elliptic equation, the elliptic PDE
would be degenerate and the boundary conditions is insufficient (as
conditions on $\{T\}\times D$ is missing, assuming we cap $\mathbb{R}^{+}$
to some finite $T$). If we diverge from \cite{Hasminskii,Gardiner_StochMethods,Pavliotis_LangevinTextbook,Zwanzig,RiskenFP}
slightly, the intuitive boundary condition is again $\pi_{x}u(s,x)=0$.
This is problematic in that there are minimal (if any) PDE theory
that deals with such projection boundary conditions therefore making
attempts to prove existence and uniqueness difficult. It is our assumption
that particular to the boundary conditions is why \cite{Hasminskii,Gardiner_StochMethods,Pavliotis_LangevinTextbook,Zwanzig,RiskenFP}
and other similar texts have omitted the non-autonomous SDE case.

The novel contribution of this paper is the rigorous derivation of
the second-order parabolic PDE (\ref{eq:ExpectedDurationPDE_Periodic})
in which $\bar{\tau}$ satisfies for $T$-periodic SDEs. This is complete
with boundary conditions in space and periodicity in the time domain
to show the PDE is well-posed. Furthermore, using the framework we
build in this paper, we provide numerical methods for solving the
periodic solution of the parabolic PDE.

\subsection{Expected Duration PDE Derivation}

To rigorously derive the expected duration PDE, we first fix some
standard nomenclature and notation. For the open domain $D\subset\mathbb{R}^{d}$
and open interval $I\subset\mathbb{R}^{+}$. We define their Cartesian
product by $D_{I}:=I\times D$. When $I=(0,T)$, we define $D_{T}:=(0,T)\times D$.
And we define $B_{r}(y):=\{x\in\mathbb{\mathbb{R}}^{d}|\left\lVert x-y\right\rVert <r\}$
for the open ball of radius $r>0$ centred at $y$, and denote for
convenience $B_{r}:=B_{r}(0)$. On $\mathbb{R}^{d}$, we let $\Lambda$
be the Lebesgue measure. For matrices, we let $L_{2}(\mathbb{R}^{d}):=\{\sigma\in\mathbb{R}^{d\times d}|\lVert\sigma\rVert_{2}<\infty\}$,
where $\lVert\sigma\rVert_{2}=\sqrt{\text{Tr}(\sigma\sigma^{T})}=\sqrt{\sum_{i,j=1}^{d}\sigma_{ij}^{2}}$
is the standard Frobenius norm. For $\theta_{x}\in(0,1]$, denote
by $C^{\theta_{x}}(D)$ the collection of all functions globally $\theta_{x}$-Hölder
continuous on $D$. For $\theta_{t},\theta_{x}\in(0,1]$, denote by
$C^{\theta_{t},\theta_{x}}(I\times D)$ the set of functions $\theta_{t}$-Hölder
and $\theta_{x}$-Hölder functions in the $t$ and $x$ variable respectively.

Let $k_{t},k_{x}\in\mathbb{N}$, we denote by $C^{k_{t},k_{x}}(I\times D)$
to be the space of continuously $k_{t}$-differentiable functions
in $t$ and continuously $k_{x}$-differentiable function in $x$.
For $\theta_{t},\theta_{x}\in(0,1]$, $C^{k_{t}+\theta_{t},k_{x}+\theta_{t}}(I\times D)$
denotes the space of $C^{k_{t},k_{x}}(I\times D)$ functions in which
the $k_{t}$'th $t$-derivative and $k_{x}$'th $x$-derivatives are
$\theta_{t}$ and $\theta_{x}$ are Hölder respectively. We also let
$C_{b}^{\infty}(B_{n})$ denote the space of bounded infinitely differentiable
real-valued functions on $B_{n}$. Define for ease, $\lVert\sigma\rVert_{\infty}:=\sup_{(t,x)\in\mathbb{R}^{+}\times\mathbb{R}^{d}}\lVert\sigma(t,x)\rVert_{2}$.
Following the conditions required of Theorem 1 of \cite{HopfnerLocherbachThieullenTimeDepLocalHormanders},
we say that drift is said to be locally smooth and bounded if for
all $n\in\mathbb{N},$ 
\begin{equation}
b(t,x)+\partial^{\beta}b(t,x)\quad\text{bounded on }\mathbb{R}^{+}\times B_{n},\label{eq:LocallySmoothDrift}
\end{equation}
where $\beta=(\beta_{0},\beta_{1},...,\beta_{d})\in\mathbb{N}^{d+1}$,
$\lvert\beta\rvert:=\sum_{i=0}^{d}\beta_{i}=d$ and $\partial^{\beta}:=\frac{\partial^{\lvert\beta\rvert}}{\partial_{t}^{\beta_{0}}\partial_{x_{1}}^{\beta_{1}}\cdot\cdot\cdot\partial_{x_{d}}^{\beta_{d}}}$.
Note that the partial derivative here refers to derivative in any
spatial direction and in the time direction.

We say the SDE (\ref{eq:GeneralSDE_NonAuton}) satisfies the regularity
condition if its coefficients $b$ and $\sigma$ are locally Lipschitz
and there exists a function $V\in C^{1,2}(\mathbb{R}^{+}\times\mathbb{R}^{d},\mathbb{R}^{+})$
and a constant $c>0$ such that $\lim_{x\rightarrow\infty}V(t,x)=\infty$
for all fixed $t$ and 
\begin{equation}
L(t)V\leq cV,\quad\text{on }\mathbb{R}^{+}\times\mathbb{R}^{d}.\label{eq:RegularityCondition}
\end{equation}
It was shown in \cite{Hasminskii} that if SDE (\ref{eq:GeneralSDE_NonAuton})
satisfies the regularity condition (\ref{eq:RegularityCondition}),
then the process is regular i.e. $\mathbb{P}^{s,x}\{\eta_{\infty}=\infty\}=1$,
where $\eta_{\infty}=\lim_{n\rightarrow\infty}\eta_{B_{n}}$. Moreover,
there exists a unique almost surely finite solution. SDE (\ref{eq:GeneralSDE_NonAuton})
is said to be weakly dissipative if there exists a constant $c\geq0,$
$\lambda>0$ such that 
\begin{equation}
2\langle b(t,x),x\rangle\leq c-\lambda\lVert x\rVert^{2}\quad\text{on \ensuremath{\mathbb{R}^{+}\times\mathbb{R}^{d}}}.\label{eq:WeaklyDissip}
\end{equation}
If $c=0$, then it is said to be dissipative. While weak dissipativity
is a stronger condition than (\ref{eq:RegularityCondition}) and is
also often easier to verify, particularly for many typical physical
systems. It was shown that $T$-periodicity and weak dissipativity
leads to the geometric ergodicity of periodic measures \cite{FZZ19}.

We say $\sigma$ is locally smooth and bounded if for all $n\in\mathbb{N}$
\begin{equation}
\sigma^{ij}\in C_{b}^{\infty}(B_{n}),\quad1\leq i,j\leq d.\label{eq:LocallySmoothSigma}
\end{equation}
Finally, we say $\sigma$ is (globally) bounded with (globally) bounded
inverse if 
\begin{equation}
\max\{\lVert\sigma\rVert_{\infty},\lVert\sigma^{-1}\rVert_{\infty}\}<\infty.\label{eq:Sigma_and_inverse_bounded}
\end{equation}
Observe (\ref{eq:LocallySmoothSigma}) and (\ref{eq:LocallySmoothDrift})
imply the respective functions are locally Lipschitz. Whenever we
assume (\ref{eq:LocallySmoothSigma}), we always demand that $\sigma$
is a function of spatial variables only.

It appears that in numerous existing literature, almost surely finite
exit time is implicitly assumed. Particularly for degenerate noise,
it may well be that the exit time is infinite with positive probability
or indeed almost surely. Utilising asymptotic stability of diffusion
processes, it is easy to construct examples where the process never
leaves a point or domain. We refer readers to \cite{Mao_SDEs} for
examples. In the following lemma, we give verifiable conditions to
imply irreducibility and show further that $\eta$ is almost surely
finite with finite first and second moments.
\begin{lem}
\label{lem:AlmostSurelyFinite_Eta_and_Expectation}Let $D\subset\mathbb{R}^{d}$
be a non-empty open bounded set. Assume that the $T$-periodic SDE
(\ref{eq:GeneralSDE_NonAuton}) satisfies (\ref{eq:RegularityCondition}),
(\ref{eq:LocallySmoothDrift}), (\ref{eq:LocallySmoothSigma}) and
(\ref{eq:Sigma_and_inverse_bounded}). Then $\eta(s,x)$ is finite
almost surely for all $(s,x)\in\mathbb{R}^{+}\times D$. Moreover,
$\eta(s,x)$ has finite first and second moments. 
\end{lem}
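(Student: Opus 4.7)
The plan is to prove geometric tails for $\eta$ by showing that in any period of length $T$ the process has a uniformly positive probability of leaving the bounded set $D$. Once this is in hand, finiteness of all moments (and in particular of the first two claimed) is immediate from the Markov property. The regularity condition (\ref{eq:RegularityCondition}) supplies a global unique solution, so the transition kernel $P(s,t,x,\cdot)$ is well-defined, and by (\ref{eq:MarkovTransition_ModT}) it suffices to control $s\in[0,T]$.

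First I would establish strong Feller and irreducibility of $P(s,s+T,x,\cdot)$. The local smoothness hypotheses (\ref{eq:LocallySmoothDrift})--(\ref{eq:LocallySmoothSigma}) together with uniform ellipticity (\ref{eq:Sigma_and_inverse_bounded}) are exactly what is required to invoke Theorem~1 of \cite{HopfnerLocherbachThieullenTimeDepLocalHormanders}, producing a jointly continuous transition density $p(s,t,x,y)$ for $t>s$. Strict positivity $p(s,s+T,x,y)>0$ then follows either from the same machinery or, more elementarily, from a Girsanov change of drift on $B_{R}\supset\bar D$ that reduces the claim to positivity of the Gaussian heat kernel (local boundedness of $b$ on $B_R$ makes Novikov's condition trivially satisfied after localisation).

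Next I would promote this pointwise positivity to a uniform lower bound on the per-period exit probability. Pick any bounded open $U\subset\mathbb{R}^{d}\setminus\bar{D}$, which is non-empty because $D$ is bounded. Then
\[
(s,x)\longmapsto P(s,s+T,x,U)=\int_{U}p(s,s+T,x,y)\,dy
\]
is continuous and strictly positive on the compact cylinder $[0,T]\times\bar{D}$, hence bounded below by some $p\in(0,1)$; by $T$-periodicity the same bound then holds for every $s\geq 0$. Since $\{X_{s+T}\in U\}\subset\{\eta(s,x)\leq s+T\}$, I obtain $\mathbb{P}(\eta(s,x)-s>T)\leq 1-p$ uniformly in $(s,x)\in\mathbb{R}^{+}\times\bar{D}$.

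An induction on $n$ via the strong Markov property---on $\{\eta(s,x)>s+nT\}$ we have $X_{s+nT}\in\bar{D}$, so the next-period exit probability is again at least $p$---then delivers $\mathbb{P}(\eta(s,x)>s+nT)\leq(1-p)^{n}$, forcing $\eta(s,x)<\infty$ almost surely and $\mathbb{E}[\eta(s,x)^{k}]<\infty$ for every $k\in\mathbb{N}$, which contains the required $k=1,2$. The main obstacle is the first step, namely harvesting joint continuity and strict positivity of $p(s,t,x,y)$ under the deliberately minimal regularity imposed on $b,\sigma$; everything afterwards is compactness and Markov iteration. A fully self-contained alternative that bypasses all Hörmander-type input is to build an explicit supersolution $v(x)=e^{\alpha R}(e^{\alpha R}-e^{\alpha x_{1}})$ on a ball $B_{R}\supset\bar{D}$: uniform ellipticity $a^{11}\geq\lambda_{0}$ from (\ref{eq:Sigma_and_inverse_bounded}) and boundedness of $b$ on $B_R$ from (\ref{eq:LocallySmoothDrift}) force $L(t)v\leq-1$ for $\alpha$ large, so Dynkin's formula applied to $v$ and $\eta_{D}\wedge(s+n)$ (followed by monotone convergence) gives $\mathbb{E}[\eta]<\infty$, and iterating with a $w$ satisfying $L(t)w\leq-v$ gives $\mathbb{E}[\eta^{2}]<\infty$.
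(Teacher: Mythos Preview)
Your primary argument is essentially the paper's own proof: both invoke \cite{HopfnerLocherbachThieullenTimeDepLocalHormanders} (through \cite{FZZ19}) for a continuous positive transition density, use compactness of $\bar D$ to extract a uniform per-period exit probability, and iterate the Markov property for a geometric tail on $\tau$. The differences are cosmetic---the paper bounds $\epsilon:=\sup_{x\in\bar D}P(s,s+T,x,D)<1$ rather than your $p:=\inf P(s,s+T,x,U)>0$, and phrases the iteration via the discrete chain $Z_n=X_{s+nT}$ rather than the strong Markov property directly.

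Your supersolution alternative, on the other hand, is a genuinely different and more elementary route that the paper does not take. The paper needs the density machinery only to obtain the uniform exit probability; your barrier $v(x)=e^{\alpha R}(e^{\alpha R}-e^{\alpha x_1})$ gets $\mathbb{E}[\tau]\leq v(x)$ straight from Dynkin's formula, using only uniform ellipticity from (\ref{eq:Sigma_and_inverse_bounded}) and local boundedness of $b$ on $\bar D$ from (\ref{eq:LocallySmoothDrift}), bypassing strong Feller and irreducibility altogether. This buys robustness (the smoothness hypotheses (\ref{eq:LocallySmoothDrift})--(\ref{eq:LocallySmoothSigma}) are hardly touched) at the price of a slightly vaguer second-moment step: your ``iterate with $w$ such that $L(t)w\leq -v$'' is workable, but the cleanest closure is to observe that the \emph{uniform} bound $\mathbb{E}[\tau(s,x)]\leq C$ together with Markov's inequality and the strong Markov property already yields the geometric tail $\mathbb{P}(\tau>2nC)\leq 2^{-n}$, whence all moments.
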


\begin{proof}
It was shown in \cite{FZZ19} that (\ref{eq:RegularityCondition})
and (\ref{eq:Sigma_and_inverse_bounded}) sufficiently implies $P$
is irreducible i.e. $P(s,t,x,\Gamma)>0$ for all $x\in\mathbb{R}^{d}$,
$0\leq s<t<\infty$ and non-empty open set $\Gamma\in\mathcal{B}(\mathbb{R}^{d})$.
Then for any fixed $s\in\mathbb{R}^{+}$, for all $x\in\mathbb{R}^{d}$,
it follows that there exists an $\epsilon(x)=\epsilon(s,x,D)\in(0,1)$
such that 
\[
\epsilon(x)=P(s,s+T,x,D).
\]
By the results of \cite{HopfnerLocherbachThieullenTimeDepLocalHormanders},
\cite{FZZ19} showed that when conditions (\ref{eq:RegularityCondition}),
(\ref{eq:LocallySmoothDrift}), (\ref{eq:LocallySmoothSigma}) and
(\ref{eq:Sigma_and_inverse_bounded}) hold then $P$ possesses a smooth
density. This implies that $P$ is strong Feller i.e. $P(s,t,\cdot,\Gamma)$
is continuous for all $s<t$ and $\Gamma\in\mathcal{B}(\mathbb{R}^{d})$.
Then it follows from the boundedness of $D$ that the probability
of staying within $D$ in one period is at most 
\[
\epsilon:=\sup_{x\in\bar{D}}\epsilon(x)>0.
\]
Since $P$ is irreducible and $D\subset\mathbb{R}^{d}$ is non-empty
open, we have that $P(s,s+T,x,D^{c})>0$ for any $x\in D$. Further,
since $D$ is bounded, we deduce that $\epsilon<1$. By (\ref{eq:MarkovTransition_ModT}),
$Z^{s,x}=(Z_{n}^{s,x}):=(X_{s+nT}^{s,x})_{n\in\mathbb{N}}$ is time-homogeneous
Markov chain with one-step Markovian transition $P(s,s+T,x,\cdot)$.
Define the exit time 
\[
\eta_{Z}=\eta_{Z}(s,x):=\min\{n\in\mathbb{N}:Z_{n}^{s,x}\notin D\}.
\]
By sample-path continuity of $X_{t}$, it is clear that $X_{t}\notin D$
for at least one $t\in[s+(\eta_{Z}-1)T,s+\eta_{Z}T]$. Hence $\tau(s,x):=\eta(s,x)-s\leq\eta_{Z}(s,x)\cdot T$,
in particular, we have 
\[
\{\eta_{Z}(s,x)\geq n\}\subset\{\eta(s,x)\geq s+nT\}.
\]
Hence if $\mathbb{P}(\eta_{Z}<\infty)=1$ then $\mathbb{P}(\eta<\infty)=1$
i.e. if $Z_{n}^{s}$ leaves $D$ in almost surely finite time then
$X_{t}$ does also. For any $n\in\mathbb{N}$, it is easy to see that
\[
\{\eta_{Z}=n\}=\{Z_{n}^{s}\in D^{c}\}\cap\bigcap_{m=1}^{n-1}\{Z_{m}^{s}\in D\}.
\]
Since $Z_{0}^{s}=x\in D$, by elementary time-homogeneous Markov chain
properties, 
\begin{align}
\mathbb{P}(\eta_{Z}=n) & =\mathbb{P}(Z_{n}^{s}\in D^{c}|\bigcap_{m=0}^{n-1}\{Z_{m}^{s}\in D\})\mathbb{P}(\bigcap_{m=0}^{n-1}\{Z_{m}^{s}\in D\})\nonumber \\
 & =\mathbb{P}(Z_{n}^{s}\in D^{c}|Z_{n-1}^{s}\in D)\prod_{m=1}^{n-1}\mathbb{P}(Z_{m}^{s}\in D|Z_{m-1}^{s}\in D)\nonumber \\
 & \leq\epsilon^{n-1}.\label{eq:Prob(eta=00003D00003Dn)}
\end{align}
This concludes that $\eta$ is almost surely finite. Via (\ref{eq:Prob(eta=00003D00003Dn)}),
it is elementary to show that $\tau$ has finite first and second
moments: 
\[
\mathbb{E}[\tau(s,x)]\le T\mathbb{E}[\eta_{Z}(s,x)]=T\sum_{n=0}^{\infty}n\mathbb{P}(\eta_{Z}=n)\leq T\sum_{n=0}^{\infty}\frac{d}{d\epsilon}\epsilon^{n}=T\frac{d}{d\epsilon}\frac{1}{1-\epsilon}=\frac{T}{\left(1-\epsilon\right)^{2}}<\infty.
\]
Similarly, 
\[
\mathbb{E}[\tau^{2}(s,x)]\leq T^{2}\mathbb{E}[\eta_{Z}^{2}(s,x)]=T^{2}\sum_{n=0}^{\infty}\left[\frac{d^{2}}{d\epsilon^{2}}\epsilon^{n+1}-\frac{d}{d\epsilon}\epsilon^{n}\right]=\frac{T^{2}(1+\epsilon)}{(1-\epsilon)^{3}}<\infty.
\]
It follows that $\eta$ has finite first and second moments.
\end{proof}
\begin{rem}
Observe that Lemma \ref{lem:AlmostSurelyFinite_Eta_and_Expectation}
abstractly holds provided that $P$ is irreducible and strong Feller.
It is well-known that for autonomous SDEs, Hörmander's condition sufficiently
implies the existence of a smooth density for $P$ and therefore implies
the strong Feller property. However, we note that Hörmander's condition
is not sufficient for Lemma \ref{lem:AlmostSurelyFinite_Eta_and_Expectation}
to hold. Firstly, Hörmander's condition is insufficient to imply irreducibility,
we refer readers to Remark 2.2 of \cite{Hairer_Hormander} for a counterexample.
Secondly, Hörmander's condition was classically written for autonomous
systems hence not directly applicable to the current non-autonomous
case. Inclusive of the non-autonomous, it is well-known that density
of $P$ exists for uniformly elliptic diffusions with globally Hölder
and bounded coefficients in $\mathbb{R}^{d}$ \cite{FW98,Stroock_Varadhan_MDP}.
However, these conditions can be too restrictive for applications
from the SDE perspective. On the other hand, Theorem 1 of \cite{HopfnerLocherbachThieullenTimeDepLocalHormanders}
extends Hörmander's condition to the case of non-autonomous SDEs with
the conditions (\ref{eq:LocallySmoothDrift}), (\ref{eq:RegularityCondition})
and (\ref{eq:LocallySmoothSigma}). While these conditions require
more smoothness than globally Hölder mentioned, the coefficients can
be unbounded and is flexible enough for a wide range of SDEs.
\end{rem}

\begin{rem}
It should be clear that Lemma \ref{lem:AlmostSurelyFinite_Eta_and_Expectation}
can be adapted to hold in the more general (not-necessarily $T$-periodic)
non-autonomous case. Namely by picking any fixed $T>0$, define $\epsilon_{n}:=\sup_{x\in\overline{D}}P(s+(n-1)T,s+nT,x,D)$,
then the same calculations via properties of the two-parameter Markov
kernel yields $\mathbb{P}(\eta_{Z}=n)\leq\epsilon^{n}$, where $\epsilon:=\max_{n\in\mathbb{N}}\epsilon_{n}$. 
\end{rem}

The Fokker-Planck equation is a well-known second-order linear parabolic
PDE that describes the time evolution of the probability density function
associated to SDEs \cite{BKRS15,RiskenFP,Hasminskii,Pavliotis_LangevinTextbook,Gardiner_StochMethods,Zwanzig}.
The existence and uniqueness of Fokker-Planck equation have been studied
in many settings including irregular coefficients and time-dependent
coefficients \cite{LL08,BKRS15,RZ10,DR12}. In \cite{FZZ19}, it was
shown that the periodic measure density is necessarily and sufficiently
the time-periodic solution of the Fokker-Planck equation. Existence
of time-periodic solution of the Fokker-Planck has been discussed
in \cite{JungNumericalSchemeForLiftedFokkerPlanck,FP1,FP2}.

To study the exit problem, we study the Fokker-Planck equation in
the domain $D$ and impose absorbing boundaries \cite{RiskenFP,Gardiner_StochMethods,Pavliotis_LangevinTextbook}.
Specifically, let $p_{D}(s,t,x,y)$ denote the probability density
of the process starting at $x$ at time $s$ to $y$ at time $t$
that gets absorbed on $\partial D$. Then the density $p_{D}$ satisfies
the following Fokker-Planck equation 
\begin{equation}
\begin{cases}
\partial_{t}p_{D}(s,t,x,y)=L^{*}(t)p_{D}(s,t,x,y),\\
p_{D}(s,s,x,y)=\delta_{x}(y), & x\in D,\\
p_{D}(s,t,x,y)=0, & \mbox{if }y\in\partial D,t\geq s.
\end{cases}\label{eq:FokkerPlanck_AbsorbingBoundaries}
\end{equation}
Here $L^{*}(t)$ acts on forward variable $y$. To discuss the solvability
of (\ref{eq:FokkerPlanck_AbsorbingBoundaries}) and subsequent PDEs,
we lay out typical PDE conditions that are weaker than conditions
required by Lemma \ref{lem:AlmostSurelyFinite_Eta_and_Expectation}.\\

\textbf{Condition A1}: For some $\theta\in(0,1]$,
\begin{enumerate}
\item Domain $D\in\mathcal{B}(\mathbb{R}^{d})$ is non-empty and open with
boundary $\partial D\in C^{\theta}(\mathbb{R}^{d-1})$. 
\item The coefficients $a^{ij},b^{i}\in C^{\frac{\theta}{2},\theta}(\bar{D}_{T})$. 
\item The matrix $a(s,x)=(a^{ij}(s,x))$ is uniformly elliptic i.e. there
exists $\alpha>0$ such that
\end{enumerate}
\begin{equation}
\langle a(s,x)\xi,\xi\rangle_{\mathbb{R}^{d}}\geq\alpha\left\lVert \xi\right\rVert _{\mathbb{R}^{d}}^{2},\quad(s,x)\in D_{T},\xi\mathbb{\in R}^{d}.\label{eq:UniformlyElliptic}
\end{equation}
Particularly for adjoint operator $L^{*}(t)$ where more differentiability
is required, we consider further\\

\textbf{Condition A2}: For some $\theta\in(0,1]$, Condition A1 holds
and moreover $a^{ij},b^{i}\in C^{1+\theta,2+\theta}(\bar{D}_{T})$
and $\partial D\in C^{2+\theta}(\mathbb{R}^{d-1})$.

It is well-known that if Condition A2 holds, then there exists a unique
solution $p_{D}(s,\cdot,x,\cdot)\in C^{1,2}(D_{T})$ to (\ref{eq:FokkerPlanck_AbsorbingBoundaries}).
Moreover, $p_{D}(s,t,x,y)$ is jointly continuous in $(x,y)$. For
details, we refer readers to Section 7, Chapter 3 in \cite{Friedman_ParabolicPDE}.
The following lemma and its proof are similar to the one presented
in \cite{Gardiner_StochMethods,Pavliotis_LangevinTextbook,RiskenFP}
when the coefficients are time-independent. We prove for the time-dependent
coefficients case. For clarity of the key ingredients of the following
lemma, we assume $\eta$ to have finite second moment rather than
the conditions assumed in Lemma \ref{lem:AlmostSurelyFinite_Eta_and_Expectation}.
\begin{lem}
\label{lem:TauExpression} Assume that Condition A2 holds for SDE
(\ref{eq:GeneralSDE_NonAuton}). Assume further that $\eta$ has finite
second moment. Then 
\begin{equation}
\bar{\tau}(s,x)=\int_{s}^{\infty}\int_{D}p_{D}(s,t,x,y)dydt,\label{eq:TauExpression}
\end{equation}
where $p_{D}(s,\cdot,x,\cdot)$ is the unique solution to (\ref{eq:FokkerPlanck_AbsorbingBoundaries}). 
\end{lem}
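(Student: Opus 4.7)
The plan is to identify the spatial integral $\int_{D}p_{D}(s,t,x,y)\,dy$ with the survival probability $\mathbb{P}^{s,x}(\eta(s,x)>t)$ and then apply the tail formula for the expectation of a non-negative random variable. The identification rests on the fact that, under Condition A2, the classical solution $p_{D}$ of (\ref{eq:FokkerPlanck_AbsorbingBoundaries}) coincides with the transition density of the diffusion killed upon first hitting $\partial D$.

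To make this killed-process identification rigorous, I would define the sub-probability measure $\mu_{t}^{s,x}(A):=\mathbb{P}^{s,x}(X_{t}\in A,\,\eta>t)$ for Borel $A\subset D$, and aim to show $\mu_{t}^{s,x}(dy)=p_{D}(s,t,x,y)\,dy$. One clean route is a duality argument: for each test function $\phi\in C_{c}^{\infty}(D)$, let $v(r,y)$ be the classical solution (existing under Condition A2) of the backward Kolmogorov equation $\partial_{r}v+L(r)v=0$ on $[s,t]\times D$ with terminal data $v(t,\cdot)=\phi$ and vanishing Dirichlet data on $\partial D$. Applying Ito's formula to $r\mapsto v(r,X_{r})$ stopped at $t\wedge\eta$, the Dirichlet condition annihilates the contribution at $r=\eta$; taking expectations yields $\mathbb{E}^{s,x}[\phi(X_{t})\mathbf{1}_{\eta>t}]=v(s,x)$. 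A Green's identity pairing of the forward and backward parabolic operators on $D_{T}$ (valid since both $v$ and $p_{D}$ possess the requisite $C^{1,2}$ regularity under Condition A2) gives $v(s,x)=\int_{D}\phi(y)\,p_{D}(s,t,x,y)\,dy$. As $\phi$ is arbitrary, a monotone approximation of $\mathbf{1}_{D}$ from below yields
\begin{equation*}
\int_{D}p_{D}(s,t,x,y)\,dy=\mathbb{P}^{s,x}(\eta(s,x)>t),\qquad t\geq s.
\end{equation*}

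With the survival probability identified, the conclusion follows from the tail formula for non-negative random variables. Since $\tau(s,x)=\eta(s,x)-s\geq 0$ has finite first moment (implied by the assumed finite second moment),
\begin{equation*}
\bar{\tau}(s,x)=\int_{0}^{\infty}\mathbb{P}(\tau>u)\,du=\int_{s}^{\infty}\mathbb{P}(\eta>t)\,dt=\int_{s}^{\infty}\int_{D}p_{D}(s,t,x,y)\,dy\,dt,
\end{equation*}
where the last interchange of integration order is legitimate by Tonelli's theorem since $p_{D}\geq 0$. The main obstacle is the killed-process identification in the second paragraph: rigorously matching $p_{D}$ with the sub-probability density of the killed diffusion. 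The delicate point is the behaviour of paths near $\partial D$, which is controlled precisely by the zero Dirichlet condition in (\ref{eq:FokkerPlanck_AbsorbingBoundaries}) together with the $C^{2+\theta}$ regularity of $\partial D$ and coefficients in Condition A2; these ensure that the backward solution $v$ and the forward density $p_{D}$ carry enough regularity for both the Ito's formula and the forward-backward duality computations above to be carried out without any localisation hiccups at the boundary.
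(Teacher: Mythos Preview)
Your proof is correct and shares the paper's core strategy: identify $\int_D p_D(s,t,x,y)\,dy$ with the survival probability $\mathbb{P}(\eta>t)$ and then integrate in $t$. The execution of the final step differs, however. You invoke the tail formula $\mathbb{E}[\tau]=\int_0^\infty\mathbb{P}(\tau>u)\,du$ directly, whereas the paper first differentiates $G(s,t,x):=\mathbb{P}(\eta>t)$ in $t$ to obtain the density $p_\eta=-\partial_t G$, writes $\bar\eta=\int_s^\infty t\,p_\eta\,dt$, and integrates by parts---using Chebyshev's inequality with the finite \emph{second} moment to kill the boundary term $tG(s,t,x)\to 0$ as $t\to\infty$. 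Your route is more elementary and shows that the identity~(\ref{eq:TauExpression}) itself only needs the first moment; the paper's route is where the second-moment hypothesis in the statement is actually consumed. You also supply a more careful justification (via backward--forward duality and It\^o's formula up to $\eta$) for the identification of $p_D$ with the killed-diffusion sub-probability density, which the paper essentially takes as the defining property of the absorbing Fokker--Planck solution.
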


\begin{proof}
Let $G(s,t,x)$ be the probability that the process starting at $x$
at time $s$ is still within $D$ at time $t\geq s$. In the derivation
below, we treat $(s,x)$ as fixed parameters so that $G$ is only
a function of $t$. By the absorbing boundary conditions of $p_{D}$,
we have 
\begin{equation}
G(s,t,x)=\int_{D}p_{D}(s,t,x,y)dy.\label{eq:G_expressed_as_integral}
\end{equation}
On the other hand, 
\[
G(s,t,x)=\mathbb{P}(\eta(s,x)>t)=1-\mathbb{P}(\eta(s,x)\leq t).
\]
Then, since $p_{D}$ is $t$-differentiable, by (\ref{eq:G_expressed_as_integral}),
it is clear that a density $p_{\eta}(s,t,x)$ exists for $\eta(s,x)$
given by 
\begin{equation}
p_{\eta}(s,t,x)=-\partial_{t}G(s,t,x).\label{eq:G-density}
\end{equation}
Note that if $x\in D$ then $G(s,s,x)=1$. Note further that by Chebyshev's
inequality, 
\[
G(s,t,x)=\mathbb{P}(\eta(s,x)>t)\leq\frac{1}{t^{2}}\mathbb{E}\left[\eta^{2}(s,x)\right],\quad t>s.
\]
Since $G\geq0$, it follows that $\lim_{t\rightarrow\infty}tG(s,t,x)=0$,
hence the following holds by an integration by parts 
\begin{align*}
\bar{\eta}(s,x) & =\int_{s}^{\infty}tp_{\eta}(s,t,x)dt\\
 & =-\int_{s}^{\infty}t\partial_{t}G(s,t,x)dt\\
 & =-tG(s,t,x)|_{t=s}^{\infty}+\int_{s}^{\infty}G(s,t,x)dt\\
 & =s+\int_{s}^{\infty}G(s,t,x)dt.
\end{align*}
Hence 
\begin{equation}
\bar{\tau}(s,x)=\int_{s}^{\infty}G(s,t,x)dt.\label{eq:Tau_Equals_Integral_G}
\end{equation}
The result follows by (\ref{eq:G_expressed_as_integral}). 
\end{proof}
While finite first moment of $\eta$ was not explicitly used in Lemma
\ref{lem:TauExpression}, we note that it is of course finite since
it has finite second moment and applying Hölder's inequality. It is
then obvious then that (\ref{eq:Tau_Equals_Integral_G}) is finite.

Let $X^{0}$ and $X^{1}$ be two random variables, we write $X^{0}\sim X^{1}$
if they have the same distribution. Then we have the following intuitive
lemma that was proved and presented in \cite{FZZ19}. 
\begin{lem}
\label{lem:DistributionModT} Let Condition A2 hold for $T$-periodic
SDE (\ref{eq:GeneralSDE_NonAuton}). Let $\left(X_{t}^{0}\right)_{t\geq s},\left(X_{t}^{1}\right)_{t\geq s+T}$
be two processes satisfying (\ref{eq:GeneralSDE_NonAuton}). If $X_{s}^{0}\sim X_{s+T}^{1}$
then $X_{s+t}^{0}\sim X_{s+T+t}^{1}$ for all $t\geq0.$ 
\end{lem}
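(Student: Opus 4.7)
The plan is to reduce the statement to the transition-kernel identity already recorded in (\ref{eq:MarkovTransition_ModT}), combined with a conditioning on the initial value. Under Condition A2 the SDE (\ref{eq:GeneralSDE_NonAuton}) admits a unique (strong) solution, so both $X^{0}$ and $X^{1}$ are Markov processes governed by the same two-parameter transition kernel $P(s,t,x,\cdot)$ defined in (\ref{eq:MarkovTransitionProb}). This is the only structural input I need; no smoothness of the density is actually used for this statement.

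First I would introduce $\mu$ as the common law of $X_{s}^{0}$ and of $X_{s+T}^{1}$, guaranteed by the hypothesis $X_{s}^{0}\sim X_{s+T}^{1}$. Conditioning on the starting value and using the Markov property, for any $t\geq 0$ and any $\Gamma\in\mathcal{B}(\mathbb{R}^{d})$ one obtains
\begin{align*}
\mathbb{P}(X_{s+t}^{0}\in\Gamma) &= \int_{\mathbb{R}^{d}} P(s,s+t,x,\Gamma)\,\mu(dx),\\
\mathbb{P}(X_{s+T+t}^{1}\in\Gamma) &= \int_{\mathbb{R}^{d}} P(s+T,s+T+t,x,\Gamma)\,\mu(dx).
\end{align*}

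Next I would apply the $T$-periodicity identity (\ref{eq:MarkovTransition_ModT}) with the pair $(s,s+t)$ shifted by $T$, which gives $P(s,s+t,x,\Gamma)=P(s+T,s+T+t,x,\Gamma)$ pointwise in $x$. Substituting this into the two displays above shows that $X_{s+t}^{0}$ and $X_{s+T+t}^{1}$ assign equal probability to every Borel set, which is precisely the conclusion.

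The only real obstacle is bookkeeping rather than analysis: one must verify that Condition A2 (and the standing assumption that a unique solution exists) actually entails both processes share the same kernel $P$ and that the periodicity identity (\ref{eq:MarkovTransition_ModT}) transfers uniformly in the starting state $x\in\mathbb{R}^{d}$ so that it can be integrated against $\mu$. Once these points are in place, no estimate beyond Fubini is required, which is why the lemma is described as intuitive.
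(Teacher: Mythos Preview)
Your argument is correct and is exactly the natural one: disintegrate the time-$s$ (resp.\ time-$s+T$) marginal against the common initial law $\mu$, apply the Markov property to express each marginal as $\int P(s,s+t,x,\Gamma)\,\mu(dx)$ and $\int P(s+T,s+T+t,x,\Gamma)\,\mu(dx)$, and then invoke the kernel periodicity (\ref{eq:MarkovTransition_ModT}). Note, however, that the paper does not actually supply a proof of Lemma~\ref{lem:DistributionModT}; it simply states that the result ``was proved and presented in \cite{FZZ19}'' and moves on, so there is nothing in the present paper to compare your argument against beyond the citation. Your write-up is self-contained and does not rely on any of the density regularity that Condition~A2 provides, which is consistent with the paper's remark that the lemma is ``intuitive''.
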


For $T$-periodic SDEs, we show in the next lemma, that the expected
duration $\bar{\tau}$ is also $T$-periodic. While this holds in
expectation, the same cannot be said of the sample-path realisations
of $\tau$. This is essentially because the noise realisation is not
periodic! In the context of random dynamical systems, this can be
proven rigorously. Indeed, if $\omega$ denotes the noise realisation
and $\theta_{t}$ to be the Wiener shift, then one has $\tau(s,x,\omega)=\tau(s+T,x,\theta_{T}\omega)$,
see \cite{CFHZ2016} for further details.
\begin{lem}
\label{lem:TauIsPeriodic}Assume that Condition A2 holds for $T$-periodic
SDE (\ref{eq:GeneralSDE_NonAuton}). Assume further that $\eta$ has
finite second moment. Then $\bar{\tau}$ is also $T$-periodic. 
\end{lem}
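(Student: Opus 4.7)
The plan is to combine the integral representation in Lemma \ref{lem:TauExpression} with the $T$-periodicity of the coefficients of (\ref{eq:GeneralSDE_NonAuton}) to obtain $\bar{\tau}(s+T,x) = \bar{\tau}(s,x)$ via a simple change of variables. The conditions of Lemma \ref{lem:TauExpression} are met by hypothesis, so the representation is available for both $\bar{\tau}(s,x)$ and $\bar{\tau}(s+T,x)$.

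The crux of the argument is to establish that the absorbed transition density itself inherits $T$-periodicity in its time arguments:
\[
p_D(s+T,t+T,x,y) \;=\; p_D(s,t,x,y) \qquad \text{for all } t\geq s,\; x,y\in D.
\]
To prove this, I would define $q(s,t,x,y):= p_D(s+T,t+T,x,y)$ and observe that, since the coefficients of $L^{*}(\cdot)$ are $T$-periodic, $L^{*}(t+T)=L^{*}(t)$. Hence $q$ satisfies the Fokker--Planck initial--boundary value problem
\[
\partial_t q(s,t,x,y) = L^{*}(t)q(s,t,x,y), \qquad q(s,s,x,y)=\delta_x(y), \qquad q(s,t,x,y)=0 \text{ for } y\in\partial D,
\]
which is precisely (\ref{eq:FokkerPlanck_AbsorbingBoundaries}). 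Under Condition A2, this problem admits a unique classical solution in $C^{1,2}(D_T)$ (Friedman's parabolic theory, as already invoked), so $q\equiv p_D$, giving the desired identity.

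With the periodicity of $p_D$ in hand, I would finish by applying Lemma \ref{lem:TauExpression} to the shifted initial time and substituting $u=t-T$:
\[
\bar{\tau}(s+T,x) \;=\; \int_{s+T}^{\infty}\!\!\int_D p_D(s+T,t,x,y)\, dy\, dt \;=\; \int_{s}^{\infty}\!\!\int_D p_D(s+T,u+T,x,y)\, dy\, du \;=\; \bar{\tau}(s,x).
\]

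The main (and really only) obstacle is the periodicity of $p_D$, and this is essentially a restatement of (\ref{eq:MarkovTransition_ModT}) for the killed process. A purely probabilistic alternative would apply Lemma \ref{lem:DistributionModT} to the process absorbed at $\partial D$: the two processes $(X_{t}^{s,x})_{t\geq s}$ and $(X_{t+T}^{s+T,x})_{t\geq s}$ have the same law (weak uniqueness under Condition A2), hence so do their first exit times, yielding $\eta(s+T,x)-T\sim\eta(s,x)$ and thus $\bar{\tau}(s+T,x)=\bar{\tau}(s,x)$ after taking expectations. I would state the PDE route as the primary argument to stay within the analytic framework developed by Lemmas \ref{lem:TauExpression}--\ref{lem:DistributionModT}.
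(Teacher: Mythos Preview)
Your proposal is correct and follows essentially the same route as the paper: use the integral representation of Lemma~\ref{lem:TauExpression}, establish $p_D(s+T,t+T,x,y)=p_D(s,t,x,y)$, and conclude by the change of variables $u=t-T$. The only cosmetic difference is that the paper invokes Lemma~\ref{lem:DistributionModT} for the periodicity of $p_D$, whereas you justify it via uniqueness of the Fokker--Planck problem (\ref{eq:FokkerPlanck_AbsorbingBoundaries}) under Condition~A2; you already note this probabilistic alternative yourself, so the two arguments coincide.
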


\begin{proof}
By Lemma \ref{lem:DistributionModT} and (\ref{eq:TauExpression}),
we have 
\begin{align*}
\bar{\tau}(s,x) & =\int_{s}^{\infty}\int_{D}p_{D}(s,r,x,y)dydr\\
 & =\int_{s}^{\infty}\int_{D}p_{D}(s+T,r+T,x,y)dydr\\
 & =\int_{s+T}^{\infty}\int_{D}p_{D}(s+T,r,x,y)dydr\\
 & =\bar{\tau}(s+T,x).
\end{align*}
\end{proof}
For the following theorem, we recall Kolmogorov's backward equation
\begin{equation}
\partial_{s}p(s,t,x,y)+L(s)p(s,t,x,y)=0,\label{eq:KolmBackEqn}
\end{equation}
where $L(s)$ acts on $x$ variable.

We are now ready to derive the PDE in which $\bar{\tau}(s,x)$ satisfies.
When the SDE is $T$-periodic, we show $\bar{\tau}(s,x)$ is the $T$-periodic
solution of a second-order linear parabolic PDE. This contrasts with
the autonomous case where the expected exit time satisfies the second-order
linear elliptic PDE (\ref{eq:ExpectedHittingPDEAuton}). To our knowledge
the derived PDE and particularly its interpretation is new in literature.
We note further that the following theorem establishes a Feynman-Kac
duality for time-periodic SDEs for the expected duration. 
\begin{thm}
\label{thm:ExpectedDurationPDE_Theorem}Assume $T$-periodic SDE (\ref{eq:GeneralSDE_NonAuton})
satisfies the same conditions as Lemma \ref{lem:AlmostSurelyFinite_Eta_and_Expectation}.
Then the expected duration $\bar{\tau}$ is the periodic solution
of the following partial differential equation of backward type

\begin{equation}
\begin{cases}
\partial_{s}u(s,x)+L(s)u(s,x)=-1, & \text{in }D_{T},\\
u=0, & \mbox{on }[0,T]\times\partial D,\\
u(0,\cdot)=u(T,\cdot). & \text{on }D.
\end{cases}\label{eq:ExpectedDurationPDE_Periodic}
\end{equation}
\end{thm}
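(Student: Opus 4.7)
The plan is to differentiate the integral representation
\[
\bar{\tau}(s,x)=\int_{s}^{\infty}\int_{D}p_{D}(s,t,x,y)\,dy\,dt
\]
supplied by Lemma \ref{lem:TauExpression} in the backward variables $(s,x)$ and to apply Kolmogorov's backward equation (\ref{eq:KolmBackEqn}) to $p_{D}$. First I would check that the hypotheses of Lemma \ref{lem:AlmostSurelyFinite_Eta_and_Expectation} combined with (\ref{eq:LocallySmoothDrift})--(\ref{eq:Sigma_and_inverse_bounded}) imply Condition A2 (or at least an interior version sufficient for classical differentiation on every compact subset of $D_{T}$), so that $p_{D}$ is a classical $C^{1,2}$ solution of (\ref{eq:FokkerPlanck_AbsorbingBoundaries}) and simultaneously satisfies $\partial_{s}p_{D}+L(s)p_{D}=0$ in $(s,x)$ with vanishing boundary values on $\partial D$.

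Set $G(s,t,x):=\int_{D}p_{D}(s,t,x,y)\,dy$, so that $\bar{\tau}(s,x)=\int_{s}^{\infty}G(s,t,x)\,dt$. Integrating the backward equation in $y$ over $D$ gives $\partial_{s}G(s,t,x)+L(s)G(s,t,x)=0$. By Leibniz's rule applied to the moving lower limit,
\[
\partial_{s}\bar{\tau}(s,x)=-G(s,s,x)+\int_{s}^{\infty}\partial_{s}G(s,t,x)\,dt=-1+\int_{s}^{\infty}\partial_{s}G(s,t,x)\,dt,
\]
using $G(s,s,x)=\int_{D}\delta_{x}(y)\,dy=1$ for $x\in D$. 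Differentiation under the integral in $x$ yields $L(s)\bar{\tau}(s,x)=\int_{s}^{\infty}L(s)G(s,t,x)\,dt$, and adding the two identities and invoking $\partial_{s}G+L(s)G=0$ produces $\partial_{s}\bar{\tau}+L(s)\bar{\tau}=-1$ on $D_{T}$. The boundary condition $\bar{\tau}=0$ on $[0,T]\times\partial D$ follows from $\eta(s,x)=s$ almost surely when $x\in\partial D\subset D^{c}$ (equivalently, from the absorbing condition on $p_{D}$), and the periodicity $\bar{\tau}(0,\cdot)=\bar{\tau}(T,\cdot)$ is immediate from Lemma \ref{lem:TauIsPeriodic}.

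The main obstacle is rigorously justifying the interchange of derivatives with the improper integral over $[s,\infty)$, which requires integrable dominants for $G$, $\partial_{s}G$, and the first two $x$-derivatives of $G$. Pointwise tail decay of $G$ is already provided by the bound $\mathbb{P}(\eta>s+nT)\leq\epsilon^{n}$ established in the proof of Lemma \ref{lem:AlmostSurelyFinite_Eta_and_Expectation}, which gives $G(s,t,x)\lesssim\epsilon^{\lfloor(t-s)/T\rfloor}$ with $\epsilon\in(0,1)$. To propagate this geometric decay to the derivatives I would apply interior parabolic Schauder estimates on cylinders of the form $[s+nT,s+(n+1)T]\times K$ with $K\Subset D$, bounding $\lVert G\rVert_{C^{1,2}}$ on each cylinder by $\lVert G\rVert_{C^{0}}$ on a slightly larger cylinder; summing the resulting geometric series yields the uniform integrability needed to justify the two interchanges.

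A safer alternative, which avoids tail bounds on derivatives, is to work with the truncated quantity $u^{N}(s,x):=\int_{s}^{s+N}G(s,t,x)\,dt$ and verify that it satisfies
\[
\partial_{s}u^{N}(s,x)+L(s)u^{N}(s,x)=-1+G(s,s+N,x)
\]
by the same Leibniz calculation on a finite interval, where all interchanges are unproblematic. One then lets $N\to\infty$, using the exponential decay of $G(s,s+N,\cdot)$ and parabolic interior regularity (to secure convergence of $u^{N}$ in $C^{1,2}_{\mathrm{loc}}(D_{T})$) to deduce that the limit $\bar{\tau}$ inherits the PDE, at which point the boundary and periodicity conditions are appended as above.
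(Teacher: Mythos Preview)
Your proposal is correct and follows essentially the same route as the paper: both arguments start from the integral representation $\bar{\tau}(s,x)=\int_{s}^{\infty}G(s,t,x)\,dt$ of Lemma \ref{lem:TauExpression}, apply Leibniz's rule (the moving lower limit producing $-G(s,s,x)=-1$), invoke Kolmogorov's backward equation to identify $L(s)\bar{\tau}$ with $-\int_{s}^{\infty}\partial_{s}G\,dt$, and then sum and appeal to Lemma \ref{lem:TauIsPeriodic} for the periodicity. The paper carries out this computation formally without discussing the interchange of derivatives and the improper $t$-integral; your additional paragraphs on geometric tail decay, interior Schauder estimates, and the truncation $u^{N}$ supply exactly the justification the paper omits, so in that respect your argument is more complete.
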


\begin{proof}
By Lemma \ref{lem:AlmostSurelyFinite_Eta_and_Expectation}, $\eta$
has finite second moment and Condition A2 holds. Hence Lemma \ref{lem:TauExpression}
holds. Thus, by (\ref{eq:Tau_Equals_Integral_G}), observe that for
any $\delta>0$, 
\begin{align*}
\bar{\tau}(s+\delta,x)-\bar{\tau}(s,x) & =\int_{s+\delta}^{\infty}\left(G(s+\delta,t,x)-G(s,t,x)\right)dt-\mathcal{G}(s+\delta),
\end{align*}
where for clarity, $\mathcal{G}(r):=\int_{s}^{r}G(s,t,x)dt$. It follows
by the fundamental theorem of calculus that 
\begin{align*}
\partial_{s}\bar{\tau}(s,x) & =\int_{s}^{\infty}\partial_{s}G(s,t,x)dt-\mathcal{G}'(s)\\
 & =\int_{s}^{\infty}\int_{D}\partial_{s}p_{D}(s,t,x,y)dydt-G(s,s,x)\\
 & =\int_{s}^{\infty}\int_{D}\partial_{s}p_{D}(s,t,x,y)dydt-1,
\end{align*}
where recall that $G$ is expressed by (\ref{eq:G_expressed_as_integral})
and $G(s,s,x)=1$ since $x\in D$. Acting $L(s)$ on $\bar{\tau}$
by (\ref{eq:TauExpression}) and (\ref{eq:KolmBackEqn}), we have
\begin{align*}
L(s)\bar{\tau}(s,x) & =\int_{s}^{\infty}\int_{D}L(s)p_{D}(s,t,x,y)dydt=-\int_{s}^{\infty}\int_{D}\partial_{s}p_{D}(s,t,x,y)dydt.
\end{align*}
Summing these quantities yields 
\begin{equation}
(\partial_{s}+L(s))\bar{\tau}(s,x)=-1.\label{eq:ExpecedDurationPDE_General}
\end{equation}
For $T$-periodic systems, Lemma \ref{lem:TauIsPeriodic} showed that
$\bar{\tau}(s,\cdot)=\bar{\tau}(s+T,\cdot)$ for all $s\in\mathbb{R}^{+}$
hence deducing $\bar{\tau}$ satisfies (\ref{eq:ExpectedDurationPDE_Periodic})
and $u$ is $T$-periodic. By Lemma \ref{lem:TauIsPeriodic}, this
is sufficient by imposing $u(0,\cdot)=u(T,\cdot)$ and the result
follows.
\end{proof}
\begin{rem}
\label{rem:Insolvability_of_general_nonauton} In the proof of Theorem
\ref{thm:ExpectedDurationPDE_Theorem}, note that $T$-periodicity
was not assumed until (\ref{eq:ExpecedDurationPDE_General}). This
suggests that for general non-autonomous (not necessarily periodic)
SDEs, $\bar{\tau}$ satisfies (\ref{eq:ExpecedDurationPDE_General}).
However, as (\ref{eq:ExpecedDurationPDE_General}) is a parabolic
PDE, in the absence of initial (or terminal) conditions, PDE (\ref{eq:ExpecedDurationPDE_General})
alone is generally ill-posed. Indeed, in the general non-autonomous
non-periodic case, both the time and the initial/terminal condition
is part of the unknown. Indeed, if $\bar{\tau}(0,\cdot)$ is known,
then this implies we already know the expected exit time when the
system starts at time $s=0$, however it is an unknown to-be solved.
Resultantly, in some works such as \cite{BDE01}, it was assumed that
for sufficiently large $\tilde{T}$ that $u(x,y,\tilde{T})=0$ for
all $(x,y)\in D$. This is to impose some terminal condition of the
parabolic PDE. This however suggests that for all $(x,y)\in D$, the
expected exit time is a constant time $\tilde{T}$. While this assumption
may hold in certain systems, it does not hold in general. For example,
in the stochastic resonance problem that we shall discuss in Section
\ref{sec:NumericalSolutionOfPde} with Figure \ref{SDE_vs_PDE_plot},
(approximately) constant expected exit time only holds for a subdomain
rather than across the entire domain. In fact, in these papers, one
usually chooses $\tilde{T}>\sup\tau(x,y,t)$ by repeatedly running
SDE simulations to find a sufficiently large $\tilde{T}$ greater
than sample expected exit times. The results of this paper show that
one can avoid simulations altogether and only solve the periodic solution
of the PDE to attain the expected exit time. For time-periodic SDEs,
the boundary conditions issue are partially resolved by Lemma \ref{lem:TauIsPeriodic},
where initial and terminal conditions coincide, albeit unknown.
\end{rem}

\begin{rem}
It should be clear that for coefficients with non-trivial time-dependence,
the parabolic PDE (\ref{eq:ExpectedDurationPDE_Periodic}) would generally
imply that $\bar{\tau}(s,x)-\bar{\tau}(s',x)\neq(s-s')$ for $s\neq s'$.
That is, the difference in initial starting time does not imply the
same difference in expected time. This reinforce that initial time
generally plays a non-trivial role in the expected duration.
\end{rem}

As mentioned in the introduction, numerically solving PDE (\ref{eq:ExpectedDurationPDE_Periodic})
can be an appealing alternative to stochastic simulations of the expected
hitting time. We note further that solving (\ref{eq:ExpectedDurationPDE_Periodic})
solves the expected hitting time for all initial starting points.
On the other hand, direct simulation would (naively) require many
simulations for each starting point.

Assuming a priori that the expected exit time is finite, then we can
prove a converse of Theorem \ref{thm:ExpectedDurationPDE_Theorem}
via Dynkin's formula. In passing, this reassures that Theorem \ref{thm:ExpectedDurationPDE_Theorem}
is correct. 
\begin{prop}
Assume $\eta$ associated to $T$-periodic (\ref{eq:GeneralSDE_NonAuton})
has finite expectation. Then if (\ref{eq:ExpectedDurationPDE_Periodic})
has a solution $u$. Then $u(s,x)=\bar{\tau}(s,x)$. 
\end{prop}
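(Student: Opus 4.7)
The plan is to prove this converse via Dynkin's formula (i.e. It\^o's formula applied to $u(t,X_t)$ up to an appropriate stopping time), which is the standard probabilistic verification argument for boundary value problems.

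First I would extend the solution $u$ to a $T$-periodic function on $\mathbb{R}^+\times \bar D$ using the matching condition $u(0,\cdot)=u(T,\cdot)$, so that $\partial_s u + L(s)u = -1$ holds on $\mathbb{R}^+\times D$ and $u=0$ on $\mathbb{R}^+\times \partial D$. Since $u$ is continuous and $T$-periodic in $s$ on the compact set $[0,T]\times \bar D$ (standard parabolic regularity under Condition A2 furnishes $u\in C^{1,2}(D_T)\cap C(\bar D_T)$), $u$ is bounded uniformly on $\mathbb{R}^+\times \bar D$, say by some constant $M>0$. Fix $(s,x)\in \mathbb{R}^+\times D$ and let $X_t=X_t^{s,x}$ denote the solution to the SDE. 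Let $\eta=\eta(s,x)$ and define the localising stopping times $\eta_N := \eta \wedge N \wedge \zeta_N$, where $\zeta_N$ is the first exit time of $X_t$ from a ball $B_N \supset \bar D$ (to ensure local boundedness of the martingale integrand); for $N$ large enough $\zeta_N > \eta$ almost surely.

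Next I would apply It\^o's formula to $u(t,X_t)$ on $[s,\eta_N]$ and, using the PDE identity $\partial_t u + L(t)u = -1$ inside $D$, obtain
\begin{equation*}
u(\eta_N, X_{\eta_N}) = u(s,x) - (\eta_N - s) + \int_s^{\eta_N} \nabla u(t,X_t)^T \sigma(t,X_t)\, dW_t.
\end{equation*}
Taking expectations kills the martingale term (it is a true martingale up to $\eta_N$ because $\nabla u$ and $\sigma$ are bounded on the compact set $[0,N]\times \bar B_N$), giving
\begin{equation*}
\mathbb{E}[u(\eta_N,X_{\eta_N})] = u(s,x) - \mathbb{E}[\eta_N - s].
\end{equation*}

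Finally I would pass to the limit $N\to\infty$. Since $\eta<\infty$ almost surely and $\zeta_N\to\infty$ by regularity, $\eta_N\to\eta$ almost surely, and sample-path continuity gives $X_{\eta_N}\to X_\eta \in \partial D$, so $u(\eta_N,X_{\eta_N})\to u(\eta,X_\eta)=0$ almost surely by the boundary condition and continuity of $u$ up to $\partial D$. The left side converges to $0$ by dominated convergence (using $|u|\le M$), and the right side converges to $u(s,x)-\bar\tau(s,x)$ by monotone convergence together with the hypothesis $\mathbb{E}[\eta]<\infty$. Hence $u(s,x)=\bar\tau(s,x)$.

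The main obstacle is the usual technicality that It\^o's formula is applied on $D_T$ while $u$ is only guaranteed smooth on the open set, so boundary behaviour of $u$ and control of the martingale term must be handled via the localisation $\eta_N$; the integrability hypothesis $\mathbb{E}[\eta]<\infty$ (together with boundedness of $u$ from $T$-periodicity) is exactly what allows the two limits in the final step to be taken rigorously without needing any finite-second-moment assumption.
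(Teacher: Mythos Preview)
Your proposal is correct and follows essentially the same approach as the paper: both apply It\^o/Dynkin's formula to $u(t,X_t)$ and use the PDE identity $(\partial_t+L(t))u=-1$ together with the boundary condition $u|_{\partial D}=0$ to conclude $u(s,x)=\bar\tau(s,x)$. Your version is in fact more detailed than the paper's, which simply invokes Dynkin's formula directly; the localisation $\eta_N$, the boundedness of $u$ from $T$-periodicity, and the dominated/monotone convergence arguments you supply are precisely the justifications the paper suppresses.
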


\begin{proof}
Since $\eta$ is a stopping time and has finite expectation, by Itô's
and Dynkin's formula, then 
\[
\mathbb{E}^{s,x}\left[\varphi(\eta,X_{\eta})\right]=\varphi(s,x)+\mathbb{E}^{s,x}\left[\int_{s}^{\eta}(\partial_{t}+L(t))\varphi(t,X_{t})dt\right],\quad\varphi\in C_{0}^{1,2}(\mathbb{R}^{+}\times\mathbb{R}^{d}).
\]
Remark \ref{rem:Insolvability_of_general_nonauton} implies that there
does not generally exist a $u\in C^{1,2}(D)$ such that $(\partial_{s}+L(s))u(s,x)=-1$
and vanishes on $\partial D$ until we impose $T$-periodicity of
$u$. Therefore if such a $u$ exists, we have by (\ref{eq:Tau_Eta})
\begin{align*}
0 & =\mathbb{E}^{s,x}\left[u(\eta,X_{\eta})\right]=u(s,x)+\mathbb{E}^{s,x}\left[\int_{s}^{\eta}-1dt\right]=u(s,x)-\bar{\tau}(s,x).
\end{align*}
i.e. $u(s,x)=\bar{\tau}(s,x)$ and so the results follows.
\end{proof}

\section{\label{sec:PDE_Methods}Well-Posedness of Expected Duration PDE}

\subsection{\label{subsec:IBVPMethods}Fixed Point of an Initial Value Problem}

In this section, utilising classical results for the well-posedness
of initial value parabolic PDEs, we will show the existence of a unique
solution to the expected duration PDE (\ref{eq:ExpectedDurationPDE_Periodic})
for the associated $T$-periodic SDE. As mentioned in the introduction,
we solve (\ref{eq:ExpectedDurationPDE_Periodic}) with typical PDE
conditions rather than the stronger SDE\emph{ }conditions required
for the rigorous derivation of the PDE. This has the advantage of
a clearer exposition and key elements to solve the PDE.

In this subsection, we associate (\ref{eq:ExpectedDurationPDE_Periodic})
with an initial value boundary PDE problem and show that (\ref{eq:ExpectedDurationPDE_Periodic})
can be rewritten as a fixed point problem. We note however that (\ref{eq:ExpectedDurationPDE_Periodic}),
as an initial value problem, is a backward parabolic equation. Such
equations are known to be generally ill-posed in typical PDE spaces.
By reversing the time, we introduce a minus sign thus PDE is uniformly
elliptic and hence more readily solvable in typical function spaces.

We give a general uniqueness and existence result via a spectral result
of \cite{Hess_PeriodicBook} in $L^{p}(D)$. Specifically on $L^{2}(D)$,
we show that if the associated bilinear form is coercive then one
can apply a Banach fixed point argument to deduce the existence and
uniqueness. This yields a practical way to numerically compute the
desired solution.

To discuss the well-posedness of (\ref{eq:ExpectedDurationPDE_Periodic}),
we recall some standard Borel measurable function spaces. For any
$1\leq p<\infty$, we denote the Banach space $L^{p}(D)$ to be the
space of functions $f:D\rightarrow\mathbb{R}$ such that its norm
$\lVert f\rVert_{L^{p}(D)}:=\left(\int_{D}\lvert f(x)\rvert^{p}dx\right)^{1/p}<\infty$.
For $k\in\mathbb{N},$ we define as usual the Sobolev space $W^{k,p}(D)$
to contain all functions $f$ in which its norm $\lVert f\rVert_{W^{k,p}(D)}:=\left(\sum_{\lvert\beta\rvert\leq k}\lVert\partial^{\beta}f\rVert_{L^{p}(D)}^{p}\right)^{1/p}<\infty$.
We let $W_{0}^{k,p}(D)=\{f\in W^{k,p}(D)|f=0\quad\text{on }\partial D\}$.
For $p=2$, $L^{2}(D)$ and $H_{0}^{k}(D):=W_{0}^{k,2}(D)$ are Hilbert
spaces with inner-product $\langle f,g\rangle_{L^{2}(D)}:=\left(\int_{D}f(x)g(x)dx\right)^{1/2}$
and $\langle f,g\rangle_{H_{0}^{k}(D)}:=\sum_{\lvert\beta\rvert\leq k}\sum_{i=1}^{d}\langle\partial^{\beta}f,\partial^{\beta}g\rangle_{L^{2}(D)}$
respectively. Occasionally, we let $(H,\lVert\cdot\rVert_{H})$ denote
a generic Hilbert space. To avoid any possible confusion, we will
be verbose with the norms and inner-products.

We begin by fixing $1<p<\infty$ and define the time-reversed uniformly
elliptic operator associated to (\ref{eq:SpatialGenerator_fixed_s})
by

\begin{equation}
L_{R}(s):=L(T-s)=\sum_{i=1}^{d}b^{i}(T-s,x)\partial_{i}+\frac{1}{2}\sum_{i,j=1}^{d}a^{ij}(T-s,x)\partial_{ij}^{2},\quad s\in[0,T].\label{eq:ReversedOperator}
\end{equation}
Note that $\mathcal{D}(L_{R}(s))=W^{2,p}(D)\cap W_{0}^{1,p}(D)\subset L^{p}(D)$
for all $s\in[0,T]$. As mentioned, the initial boundary value problem
(IBVP) associated to (\ref{eq:ExpectedDurationPDE_Periodic}) is a
backward hence ill-posed in $L^{p}(D)$. Suppose that $u$ satisfies
(\ref{eq:ExpectedDurationPDE_Periodic}), consider the the time-reversed
solution $v(s,x)=u(T-s,x)$. Then $v$ satisfies 
\begin{equation}
\begin{cases}
\partial_{s}v-L_{R}(s)v=f, & \text{in }D_{T},\\
v=0 & \text{on }[0,T]\times\partial D,\\
v(0,\cdot)=v(T,\cdot),
\end{cases}\label{eq:ReversedPDE_Periodic}
\end{equation}
where $f\equiv1$. Clearly the solvability of (\ref{eq:ExpectedDurationPDE_Periodic})
is equivalent to (\ref{eq:ReversedPDE_Periodic}) up to time-reversal.
Hence, for the rest of the paper we focus on showing existence and
uniqueness of a solution to (\ref{eq:ReversedPDE_Periodic}).

Due to the general applicability of the methods presented in this
section, where possible, we retain a general inhomogeneous function
$f:[0,T]\rightarrow L^{p}(D)$. We expect that this generality benefits
some readers for solving similar problems.

The following IBVP associated to (\ref{eq:ReversedPDE_Periodic}),

\begin{equation}
\begin{cases}
(\partial_{s}-L_{R}(s))v=f, & \text{in }D_{T},\\
v=0, & \text{on [0,T]\ensuremath{\times\partial}D,}\\
v(0,\cdot)=v_{0} & \text{on }D,
\end{cases}\label{eq:Inhomo_Reversed_IBVP}
\end{equation}
is a ``forward'' parabolic equation and is readily solvable. We
say that $v$ is a generalised solution of (\ref{eq:Inhomo_Reversed_IBVP})
if $v\in C([0,T],W^{2,p}(D)\cap W_{0}^{1,p}(D))$, its derivative
$\frac{\partial v}{\partial s}\in C((0,T),L^{p}(D))$ exists and $v$
satisfies (\ref{eq:Inhomo_Reversed_IBVP}) in $L^{p}(D)$ \cite{Pazy_Semigroup,Amann_Parabolic,Daners_Medina}.
Consider also $\phi(s,x)$ satisfying the homogeneous PDE of (\ref{eq:Inhomo_Reversed_IBVP})
i.e. 
\begin{equation}
\begin{cases}
(\partial_{s}-L_{R}(s))\phi=0, & \text{in }(r,T)\times D,\\
\phi=0 & \mbox{on }[r,T]\times\partial D,\\
\phi(r,\cdot)=\phi_{r}, & \text{in }D.
\end{cases}\label{eq:TimeReversedHomo_IBVP}
\end{equation}
Given $\phi_{r}\in L^{p}(D)$, (\ref{eq:TimeReversedHomo_IBVP}) is
well-posed, we can define the evolution operator 
\begin{equation}
\Phi(r,s):L^{p}(D)\rightarrow W^{2,p}(D)\cap W_{0}^{1,p}(D),\quad r\leq s\leq T,\label{eq:EvolOpMapping}
\end{equation}
by 
\begin{equation}
\Phi(r,s)\phi_{r}:=\phi(s).\label{eq:EvolutionOperator}
\end{equation}
It is known that $\Phi(s,r)$ satisfies the semigroup property $\Phi(r,r)=Id$
and $\Phi(r,s)=\Phi(s,t)\Phi(r,s)$ for $r\leq s\leq t$. We refer
readers to \cite{Pazy_Semigroup} for regularity properties of $\Phi$.
When (\ref{eq:Inhomo_Reversed_IBVP}) is well-posed, it is well-known
that by a variation of constants or Duhamel's formula \cite{Amann_Parabolic,Daners_Medina,Pazy_Semigroup},
the solution to inhomogeneous problem (\ref{eq:Inhomo_Reversed_IBVP})
satisfies 
\begin{equation}
v(s)=\Phi(r,s)v_{r}+\int_{r}^{s}\Phi(r',s)f(r')dr'.\label{eq:InhomoIntegralEquation}
\end{equation}
It is well-known that if Condition A1 holds and $f\in C^{\gamma}(0,T;L^{p}(D))$
for some $\gamma\in(0,1)$, then (\ref{eq:Inhomo_Reversed_IBVP})
is well-posed \cite{Pazy_Semigroup,Amann_Parabolic}. Furthermore,
we can define the solution operator after one period $\mathcal{A}:L^{p}(D)\rightarrow W^{2,p}(D)\cap W_{0}^{1,p}(D)$
by 
\begin{equation}
\mathcal{A}\varphi:=\Phi(0,T)\varphi+\int_{0}^{T}\Phi(r,T)f(r)dr.\label{eq:OperatorA}
\end{equation}
We discuss further conditions for regular solutions. Theorem 24.2
of \cite{Daners_Medina} employed Schauder estimates and Sobolev embedding
to show that if $p>d/2$, $\partial D\in C^{2}(\mathbb{R}^{d-1})$
then the solution to IBVP (\ref{eq:Inhomo_Reversed_IBVP}) with initial
condition $v_{0}\in W_{0}^{2,p}(D)$ satisfies the following regularity
\begin{equation}
v\in C(\bar{D}_{T})\cap C^{1+\frac{\theta}{2},2+\theta}((0,T]\times\bar{D}).\label{eq:RegularSoln}
\end{equation}
Furthermore, if $d<p<\infty$, by Sobolev embedding, then $v\in C^{\frac{1+\xi}{2},1+\xi}(\bar{D}_{T})\cap C^{1+\frac{\theta}{2},2+\theta}((0,T]\times\bar{D})$
for some $\xi\in(0,1)$, see \cite{Hess_PeriodicBook}. Thus, we write
our first existence and uniqueness result. 
\begin{prop}
\label{prop:ExistenceUniquness_PeriodicSoln} Assume Condition A1
holds. Assume that $d<p<\infty$, $\partial D\in C^{2}(\mathbb{R}^{d-1})$
and $f\in C^{\gamma}(0,T;L^{p}(D))$ for some $\gamma\in(0,1)$. Then
there exists a unique regular solution satisfying (\ref{eq:ReversedPDE_Periodic}).
Moreover, if $f\neq0$ then the solution is non-trivial.
\end{prop}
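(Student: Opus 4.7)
The plan is to recast (\ref{eq:ReversedPDE_Periodic}) as a fixed-point equation for the period map $\mathcal{A}$ of (\ref{eq:OperatorA}). A $T$-periodic generalised solution $v$ corresponds bijectively to an initial datum $v_{0}:=v(0,\cdot)\in L^{p}(D)$ satisfying $\mathcal{A}v_{0}=v_{0}$; by (\ref{eq:InhomoIntegralEquation}) this is equivalent to
\[
(I-\Phi(0,T))v_{0}=\int_{0}^{T}\Phi(r,T)f(r)\,dr.
\]
Thus existence and uniqueness of a periodic solution reduce to showing that $I-\Phi(0,T)$ is invertible on $L^{p}(D)$, equivalently that $1\notin\sigma(\Phi(0,T))$.

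Next I would verify the structural properties of the monodromy operator. By (\ref{eq:EvolOpMapping}), $\Phi(0,T)$ maps $L^{p}(D)$ into $W^{2,p}(D)\cap W_{0}^{1,p}(D)$; composing with the compact Rellich--Kondrachov embedding $W^{2,p}(D)\hookrightarrow L^{p}(D)$ (valid because $D$ is bounded), $\Phi(0,T)$ is compact on $L^{p}(D)$. Applying the parabolic maximum principle to (\ref{eq:TimeReversedHomo_IBVP}) shows that $\Phi(0,T)$ is a positive operator on $L^{p}(D)$ (with respect to the natural ordering).

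The crucial step is then to prove the spectral radius bound $r(\Phi(0,T))<1$. I would invoke the periodic-parabolic spectral framework of \cite{Hess_PeriodicBook} together with the Krein--Rutman theorem for compact positive operators: these yield that $r(\Phi(0,T))$ is a simple algebraically principal eigenvalue with strictly positive eigenfunction and that $r(\Phi(0,T))=e^{-\mu_{0}T}$, where $\mu_{0}$ is the principal eigenvalue of the periodic-parabolic problem $(\partial_{s}-L_{R}(s))\varphi=\mu\varphi$ on $D_{T}$ with zero Dirichlet data and time-periodic boundary condition. Because $L_{R}$ carries no zero-order term, the uniform ellipticity in Condition A1 combined with the Poincar\'e inequality on the bounded domain $D$ forces $\mu_{0}>0$, and hence $r(\Phi(0,T))<1$. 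Consequently $I-\Phi(0,T)$ is invertible on $L^{p}(D)$ by the Neumann series (or by the Fredholm alternative, since $\Phi(0,T)$ is compact), yielding a unique $v_{0}\in L^{p}(D)$ and therefore a unique periodic solution $v$ of (\ref{eq:ReversedPDE_Periodic}).

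Finally, for the regularity I would read off from (\ref{eq:RegularSoln}) that because $v_{0}$ lies in the range of $\Phi(0,T)\subset W_{0}^{2,p}(D)$ and $d<p<\infty$, the associated inhomogeneous IBVP (\ref{eq:Inhomo_Reversed_IBVP}) produces a regular solution in $C(\bar{D}_{T})\cap C^{1+\theta/2,\,2+\theta}((0,T]\times\bar{D})$, with H\"older-in-time boundary regularity coming from Sobolev embedding as recalled before the statement. Non-triviality when $f\neq0$ is immediate: $v\equiv0$ substituted into the PDE would force $f\equiv0$. The principal obstacle in this plan is the strict spectral radius bound $r(\Phi(0,T))<1$; everything else is routine once the periodic problem has been cast as a fixed-point equation. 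This step genuinely uses the absence of a zero-order term in $L_{R}$ together with uniform ellipticity to guarantee $\mu_{0}>0$, and would fail without careful attention to sign conditions if such a term were present.
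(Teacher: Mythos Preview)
Your proposal is correct and follows essentially the same route as the paper: cast the periodic problem as the fixed-point equation $(I-\Phi(0,T))v_{0}=\int_{0}^{T}\Phi(r,T)f(r)\,dr$, invoke the Krein--Rutman/periodic-parabolic spectral theory of \cite{Hess_PeriodicBook} to obtain $\rho(\Phi(0,T))<1$, invert $I-\Phi(0,T)$, and then read off regularity from (\ref{eq:RegularSoln}) via Sobolev embedding. The only difference is that you unpack the spectral-radius bound (compactness, positivity, $\rho(\Phi(0,T))=e^{-\mu_{0}T}$ with $\mu_{0}>0$ since $L_{R}$ has no zero-order term) where the paper simply cites \cite{Hess_PeriodicBook}, and the paper carries out the inversion on $W_{0}^{2,p}(D)$ rather than on $L^{p}(D)$; neither alters the substance of the argument.
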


\begin{proof}
Since $f\in C^{\gamma}(0,T;L^{p}(D))$, by Condition A1, IBVP (\ref{eq:Inhomo_Reversed_IBVP})
is well-posed for any $v_{0}\in L^{p}(D)$. Hence the evolution operator
$\Phi$ defined by (\ref{eq:EvolutionOperator}) is well-defined.
In general, to solve $T$-periodic PDE (\ref{eq:ReversedPDE_Periodic}),
by Duhamel's formula (\ref{eq:InhomoIntegralEquation}), one wishes
to find existence and uniqueness of a $v_{0}\in L^{p}(D)$ such that
\begin{equation}
v_{0}=\mathcal{A}v_{0}.\label{eq:ImposedPeriodForParabolicPDE}
\end{equation}
For initial conditions in $W_{0}^{2,p}(D)$, by rearranging from (\ref{eq:OperatorA}),
we have 
\begin{equation}
(I-\Phi(0,T))v_{0}=\int_{0}^{T}\Phi(r,T)f(r)dr,\label{eq:(I-U)v0}
\end{equation}
where $\Phi(0,s):W_{0}^{2,p}(D)\rightarrow W_{0}^{2,p}(D)$ and $I:W_{0}^{2,p}(D)\rightarrow W_{0}^{2,p}(D)$
is the identity operator. With the current conditions, via Krein-Rutman
theorem, it was shown in \cite{Hess_PeriodicBook} that $\lambda=\rho(\Phi(0,T))\in(0,1)$,
where $\lambda$ denotes the spectral radius of $\Phi(0,T)$. This
implies that $1$ is in the resolvent i.e. $(I-\Phi(0,T)):W_{0}^{2,p}(D)\rightarrow W_{0}^{2,p}(D)$
is invertible. It follows that 
\begin{equation}
v_{0}=(I-\Phi(0,T))^{-1}\int_{0}^{T}\Phi(r,T)f(r)dr,\label{eq:Solution_by_inverting_(Id-U)-1}
\end{equation}
uniquely solves (\ref{eq:ImposedPeriodForParabolicPDE}). By Sobolev
embedding, 
\begin{equation}
v(s,\cdot)=\Phi(0,s)v_{0}+\int_{0}^{s}\Phi(r,s)f(r)dr,\quad s\in(0,T],\label{eq:SolnToPeriodicPDE_Variation_of_Const}
\end{equation}
is a regular solution to (\ref{eq:ReversedPDE_Periodic}). It is easy
to see that (\ref{eq:ReversedPDE_Periodic}) does not admit trivial
solutions since ($D$ is non-empty and) $v\equiv0$ cannot satisfy
(\ref{eq:ReversedPDE_Periodic}) for $f\neq0$. 
\end{proof}
As noted in \cite{Daners_Medina}, via the semigroup property, one
can approximate $\Phi(0,T)\simeq\prod_{n=0}^{N-1}\Phi(t_{n},t_{n+1})$
for $0=t_{0}<t_{1}<...<t_{N}=T$. Hence one can approximate the inverse
in (\ref{eq:Solution_by_inverting_(Id-U)-1}) by 
\begin{equation}
(I-\Phi(0,T))^{-1}\simeq(I-\prod_{n=0}^{N-1}\Phi(t_{n},t_{n+1}))^{-1}.\label{eq:InvertingProdMatrix}
\end{equation}
We note however computing (\ref{eq:InvertingProdMatrix}) is generally
computationally expensive.

We can gain more from (\ref{eq:Solution_by_inverting_(Id-U)-1}).
We recall the weak maximum principle: if the solution is regular and
$f\geq0$, then 
\begin{equation}
\min_{(s,x)\in\bar{D}_{T}}v(s,x)=\min_{x\in\bar{D}}v_{r}(x)\label{eq:MinimumPrinciple}
\end{equation}
holds. We have seen that, by (\ref{eq:(I-U)v0}), the existence and
uniqueness of $v_{0}\in L^{2}(D)$ satisfying (\ref{eq:ImposedPeriodForParabolicPDE})
requires the invertibility of $I-\Phi(0,T)$. By von Neumann series,
we have 
\[
(I-\Phi(0,T))^{-1}=\sum_{k=0}^{\infty}\Phi^{k}(0,T),
\]
where $\Phi^{k}(0,T)$ denotes the composition of the operator $\Phi(0,T)$.

It is well-known that parabolic PDEs experience parabolic smoothing
(see e.g. \cite{Pazy_Semigroup,Evans_PDEs}) i.e. the solution of
parabolic equations are as smooth as the coefficients and initial
data. For example if $p>d/2$ and $f\in C^{\gamma}(0,T;W^{2,p}(D))$,
then $\Phi(s,t)f$ is a regular solution by (\ref{eq:RegularSoln}).
Moreover, if $f\geq0$, by the maximum principle, $\Phi(s,t)f\geq0$
for all $0\leq s\leq t\leq T$. It follows that $\mathcal{I}:=\int_{0}^{T}\Phi(r,T)f(r)dr\geq0$
and $\Phi^{k}(0,T)\mathcal{I}\geq0$ for all $k\in\mathbb{N}$. Moreover,
it follows from (\ref{eq:Solution_by_inverting_(Id-U)-1}) that 
\begin{align}
v_{0} & =\sum_{k=0}^{\infty}\Phi^{k}(0,T)\mathcal{I}\geq0,\label{eq:v0_is_nonnegative}
\end{align}
i.e. the solution to (\ref{eq:ImposedPeriodForParabolicPDE}) is non-negative.
Furthermore, if the coefficients and $f$ are smooth then condition
$p>d/2$ can be dropped and the same conclusion holds with a smooth
solution \cite{Pazy_Semigroup}. In particular, since $1\in C^{\infty}((0,T)\times D)$
is non-negative, this aligns with physical reality that expected duration
time $\bar{\tau}(0,\cdot)=v_{0}$ indeed is non-negative.

To gain further insight into solving (\ref{eq:ReversedPDE_Periodic})
from both a theoretically and computational viewpoint, we progress
our study with Hilbert spaces i.e. $p=2$ and forego some of the regularity
gained from Sobolev embedding e.g. (\ref{eq:RegularSoln}). The following
approach allows us to study (\ref{eq:ReversedPDE_Periodic}).

We start with a standard framework to deduce the existence and uniqueness
of (\ref{eq:ReversedPDE_Periodic}) on the Hilbert space $L^{2}(D)$.
For convenience, we define the bilinear form $B_{R}:H_{0}^{1}(D)\times H_{0}^{1}(D)\rightarrow\mathbb{R}$
associated to $-L_{R}$ defined by 
\begin{equation}
B_{R}[\varphi,\psi;s]=-\sum_{i=1}^{d}\int_{D}\tilde{b}^{i}(T-s,x)\partial_{i}\varphi(x)\psi(x)dx+\frac{1}{2}\sum_{i,j=1}^{d}\int_{D}a^{ij}(T-s,x)\partial_{i}\varphi(x)\partial_{j}\psi(x)dx,\label{eq:ReveresdBilinear}
\end{equation}
where $\tilde{b}^{i}(s,x)=b^{i}(s,x)+\sum_{j=1}^{d}\partial_{j}a^{ij}(s,x)$
for each $1\le i\leq d$. We recall that a bilinear form $B_{R}:H_{0}^{1}(D)\times H_{0}^{1}(D)\rightarrow\mathbb{R}$
is coercive if there exists a constant $\alpha>0$ such that 
\begin{equation}
B_{R}[\varphi,\varphi;s]\geq\alpha\left\lVert \varphi\right\rVert _{H_{0}^{1}(D)}^{2},\quad\varphi\in H_{0}^{1}(D),s\in[0,T].\label{eq:Coercive}
\end{equation}

Assuming coercivity, we give the following existence and uniqueness
theorem to (\ref{eq:ReversedPDE_Periodic}).
\begin{thm}
\label{thm:ExistenceUniquenessResult_Coercive} Assume that $a^{ij},b^{i}\in L^{\infty}(D_{T})$
and $a(\cdot,\cdot)$ satisfies uniformly elliptic condition (\ref{eq:UniformlyElliptic})
and furthermore (\ref{eq:ReveresdBilinear}) is coercive for $s\in[0,T]$.
Then for any $f\in L^{2}(0,T;L^{2}(D))$, there exists a unique solution
$v\in C([0,T],H_{0}^{1}(D))$to (\ref{eq:ReversedPDE_Periodic}).
If $f\neq0$, then the solution is non-trivial. 
\end{thm}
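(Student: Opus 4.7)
The plan is to cast (\ref{eq:ReversedPDE_Periodic}) as a Banach fixed-point problem on $L^2(D)$ for the one-period solution map $\mathcal{A}$ defined in (\ref{eq:OperatorA}), using coercivity to extract the contraction constant. Under the stated hypotheses (uniform ellipticity, $L^\infty$ coefficients, and $f\in L^2(0,T;L^2(D))$), the standard variational/semigroup theory for parabolic equations guarantees that the inhomogeneous IBVP (\ref{eq:Inhomo_Reversed_IBVP}) admits, for every initial datum $v_0\in L^2(D)$, a unique weak solution
\begin{equation*}
v\in L^2(0,T;H_0^1(D))\cap C([0,T],L^2(D)),\qquad \partial_s v\in L^2(0,T;H^{-1}(D)).
\end{equation*}
This makes $\Phi(r,s)$ and hence $\mathcal{A}$ well-defined continuous affine maps on $L^2(D)$.

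Next, I would extract the contraction rate of $\Phi(0,T)$ from the coercivity assumption via a standard energy identity. Testing the homogeneous equation against $v$ itself yields $\tfrac12\tfrac{d}{ds}\lVert v(s)\rVert_{L^2(D)}^2 + B_R[v(s),v(s);s]=0$; coercivity (\ref{eq:Coercive}) and the trivial bound $\lVert v\rVert_{H_0^1(D)}^2\geq\lVert v\rVert_{L^2(D)}^2$ then give
\begin{equation*}
\tfrac{d}{ds}\lVert v(s)\rVert_{L^2(D)}^2\leq -2\alpha\,\lVert v(s)\rVert_{L^2(D)}^2,
\end{equation*}
so Gr\"onwall's inequality delivers $\lVert\Phi(0,T)\varphi\rVert_{L^2(D)}\leq e^{-\alpha T}\lVert\varphi\rVert_{L^2(D)}$. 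Since $\mathcal{A}$ is affine with linear part $\Phi(0,T)$ and $e^{-\alpha T}<1$, Banach's fixed-point theorem produces a unique $v_0\in L^2(D)$ solving $\mathcal{A}v_0=v_0$, and the Duhamel formula $v(s):=\Phi(0,s)v_0+\int_0^s\Phi(r,s)f(r)\,dr$ then furnishes a weak $T$-periodic solution, with $v(T)=\mathcal{A}v_0=v_0=v(0)$.

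The main obstacle I anticipate is upgrading the regularity from $C([0,T],L^2(D))$ to $C([0,T],H_0^1(D))$, since the fixed-point construction a priori only places $v_0$ in $L^2(D)$. The strategy is to exploit parabolic smoothing together with $T$-periodicity: for any $s_0\in(0,T)$, the standard $L^2\!\to\!H_0^1$ smoothing estimate for the IBVP gives $v(s_0)\in H_0^1(D)$, and by periodicity (shifting the equation to $[s_0,s_0+T]$ and identifying the endpoints) one concludes $v_0\in H_0^1(D)$. Once $v_0\in H_0^1(D)$ is established, the higher-regularity theorem for parabolic PDEs (obtained by testing against $\partial_s v$ and using $a^{ij},b^i\in L^\infty$) yields $v\in L^2(0,T;H^2(D)\cap H_0^1(D))$ with $\partial_s v\in L^2(0,T;L^2(D))$; the standard interpolation embedding $L^2(H^2)\cap H^1(L^2)\hookrightarrow C([0,T];H_0^1(D))$ then gives the claimed continuity. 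Uniqueness of the periodic solution follows from uniqueness of the Banach fixed point, and non-triviality from the observation that $v\equiv 0$ forces $f=0$ in the PDE.
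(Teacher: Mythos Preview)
Your proposal is correct and follows essentially the same route as the paper: both apply Banach's fixed-point theorem to the one-period map $\mathcal{A}$ on $L^{2}(D)$, deriving the contraction constant $e^{-\alpha T}$ for $\Phi(0,T)$ from the coercivity of $B_{R}$ via the energy identity and Gr\"onwall, and then bootstrapping $v_{0}$ from $L^{2}(D)$ into $H_{0}^{1}(D)$ using parabolic smoothing. The only cosmetic difference is that the paper dispatches the regularity upgrade in one line by observing $\mathcal{A}:L^{2}(D)\to H_{0}^{1}(D)$ so that $v_{0}=\mathcal{A}v_{0}\in H_{0}^{1}(D)$ immediately, whereas you spell out the smoothing-plus-periodicity argument and the interpolation embedding more explicitly.
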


\begin{proof}
It is well-known (e.g. \cite{Evans_PDEs}) that there exists a unique
weak solution $v$ to the IBVP (\ref{eq:Inhomo_Reversed_IBVP}) i.e.
$v\in C([r,T];L^{2}(D))\cap L^{2}(r,T;H_{0}^{1}(D))$ such that $v(r)=v_{r}$,
$\partial_{s}v\in L^{2}(r,T;H^{-1}(D))$ and for almost every $s\in[r,T]$,
\begin{equation}
\langle\partial_{s}v(s),\varphi\rangle_{H^{-1}(D)\times H_{0}^{1}(D)}+B_{R}[v,\varphi;s]=\langle f(s),\varphi\rangle_{H^{-1}(D)\times H_{0}^{1}(D)},\quad\varphi\in H_{0}^{1}(D)\text{,}\label{eq:WeakFormulation}
\end{equation}
where $H^{-1}(D)$ is the space of linear functionals of the subspace
$H_{0}^{1}(D)$ on $L^{2}(D)$ and $\langle\cdot,\cdot\rangle_{H^{-1}(D)\times H_{0}^{1}(D)}:H^{-1}(D)\times H_{0}^{1}(D)\rightarrow\mathbb{R}$
denotes the duality pairing between $H^{-1}(D)$ and $H_{0}^{1}(D)$.
To prove our result, it is sufficient to assume $f\in L^{2}(D)$.
To cast (\ref{eq:ImposedPeriodForParabolicPDE}) in terms of a self-mapping,
consider $\bar{\Phi}(0,T):L^{2}(D)\rightarrow L^{2}(D)$ as the operator
$\Phi(0,T)$ with its range enlarged to $L^{2}(D)$ and define $\mathcal{\bar{A}}:L^{2}(D)\rightarrow L^{2}(D)$
by 
\begin{equation}
\mathcal{\bar{A}}\varphi:=\bar{\Phi}(0,T)\varphi+\int_{0}^{T}\bar{\Phi}(r,T)f(r)dr.\label{eq:Operator_A_for_FP}
\end{equation}
We show there exists a unique fixed point of operator $\bar{\mathcal{A}}$.
By Banach fixed point theorem, it suffices to show $\mathcal{\bar{A}}$
is a contraction on $L^{2}(D)$. Observe that this is sufficient provided
$\bar{\Phi}(0,T)$ is a contraction mapping on $L^{2}(D)$ since 
\begin{align*}
\left\lVert \mathcal{\bar{A}}\varphi-\bar{\mathcal{A}}\psi\right\rVert _{L^{2}(D)} & =\left\lVert \bar{\Phi}(0,T)(\varphi-\psi)\right\rVert _{L^{2}(D)}\leq\lVert\bar{\Phi}(0,T)\rVert\left\lVert \varphi-\psi\right\rVert _{L^{2}(D)},\quad\varphi,\psi\in L^{2}(D).
\end{align*}
In fact, we show that $\bar{\Phi}(0,s)$ is a contraction for any
$s>0$.

From (\ref{eq:EvolOpMapping}), for initial condition $\phi_{0}\in L^{2}(D)$,
the homogeneous solution, $\phi(s)\in H^{2}(D)\cap H_{0}^{1}(D)$
satisfies (\ref{eq:TimeReversedHomo_IBVP}). Then from (\ref{eq:WeakFormulation}),
one has by coercivity 
\begin{align*}
0 & =\langle\partial_{s}\phi(s),\phi(s)\rangle+B_{R}[\phi(s),\phi(s);s]\\
 & \geq\frac{1}{2}\frac{d}{ds}\left\lVert \phi(s)\right\rVert _{L^{2}(D)}^{2}+\alpha\left\lVert \phi(s)\right\rVert _{H_{0}^{1}(D)}^{2}\\
 & \geq\frac{1}{2}\frac{d}{ds}\left\lVert \phi(s)\right\rVert _{L^{2}(D)}^{2}+\alpha\left\lVert \phi(s)\right\rVert _{L^{2}(D)}^{2}.
\end{align*}
Gronwall's inequality then yields 
\begin{align*}
\left\lVert \phi(s)\right\rVert _{L^{2}(D)}^{2} & \leq e^{-2\alpha s}\left\lVert \phi_{0}\right\rVert _{L^{2}(D)}^{2},\quad s\geq0.
\end{align*}
Hence indeed 
\begin{align}
\left\lVert \bar{\Phi}(0,s)\right\rVert := & \sup_{\phi_{0}\in L^{2}(D)}\frac{\left\lVert \bar{\Phi}(0,s)\phi_{0}\right\rVert _{L^{2}(D)}}{\left\lVert \phi_{0}\right\rVert _{L^{2}(D)}}\leq e^{-\alpha s}<1,\quad s>0,\label{eq:U_is_contraction_on_L2}
\end{align}
i.e. $\bar{\Phi}(0,s)$ is a contraction on $L^{2}(D)$. Therefore
there exists a unique $v_{0}\in L^{2}(D)$ satisfying (\ref{eq:Operator_A_for_FP}).
Since $\mathcal{A:}L^{2}(D)\rightarrow H_{0}^{1}(D)\subsetneq L^{2}(D)$,
then by the right hand side of (\ref{eq:ImposedPeriodForParabolicPDE}),
it is easy to deduce that $v_{0}\in H_{0}^{1}(D)$. Define $v$ by
(\ref{eq:SolnToPeriodicPDE_Variation_of_Const}), then $v\in C([0,T],H_{0}^{1}(D))$
is the unique solution to (\ref{eq:ReversedPDE_Periodic}). Lastly,
if $0\neq f\in L^{2}(D)$, then $v$ is non-trivial.
\end{proof}
Theorem \ref{thm:ExistenceUniquenessResult_Coercive} offers not only
a theoretical existence and uniqueness result on the solution to (\ref{eq:ReversedPDE_Periodic}),
by Banach fixed point, Theorem \ref{thm:ExistenceUniquenessResult_Coercive}
immediately offers an iterative numerical approach to the solution.
To numerically computing the next Banach fixed point iterate, one
only requires to solve a IBVP for the parabolic PDE. Compared to (\ref{eq:InvertingProdMatrix}),
there are well-established numerical schemes for parabolic PDEs with
known order of convergences.

We remark that coercivity is actually stronger than required. In the
proof of Theorem \ref{thm:ExistenceUniquenessResult_Coercive}, it
is sufficient that $B[\varphi,\varphi;s]\geq\alpha\lVert\varphi\rVert_{L^{2}(D)}^{2}$.
We give an example where coercivity is shown. We consider the example
of a one-dimensional Brownian motion with periodic drift. 
\begin{example}
\label{exa:BasicCoerciveExample}Let $S\in C^{1}(\mathbb{R}^{+})$
be a $T$-periodic function and $\sigma\neq0$ and consider the one-dimensional
$T$-periodic SDE 
\[
dX_{t}=S(t)dt+\sigma dW_{t},
\]
on some bounded interval $D$. Clearly Condition A2 is satisfied.
By Theorem \ref{thm:ExistenceUniquenessResult_Coercive}, it is sufficient
to show the associated (time-reversed) bilinear form 
\[
B_{R}[\varphi,\psi;s]=-\int_{D}S(T-s)\partial_{x}\varphi(x)\psi(x)dx+\frac{\sigma^{2}}{2}\int_{D}\partial_{x}\varphi(x)\partial_{x}\psi(x)dx,\quad\varphi,\psi\in H_{0}^{1}(D),
\]
is coercive. This is obvious by an integration by parts with vanishing
boundaries and applying the Poincaré inequality 
\begin{align*}
B_{R}[\varphi,\varphi;s] & =-\frac{S(T-s)}{2}\int_{D}\partial_{x}(\varphi^{2}(x))dx+\frac{\sigma^{2}}{2}\lVert\partial_{x}\varphi\rVert_{L^{2}(D)}^{2}\\
 & \geq-\frac{S(T-s)}{2}\varphi^{2}(x)\rvert_{\partial D}+\frac{\sigma^{2}}{4}\left\lVert \partial_{x}\varphi\right\rVert _{L^{2}(D)}^{2}+\frac{\sigma^{2}}{4C_{D}}\left\lVert \varphi\right\rVert _{L^{2}(D)}^{2}\\
 & \geq\alpha\lVert\varphi\rVert_{H_{0}^{1}(D)}^{2},
\end{align*}
where $C_{D}$ denotes the Poincaré constant for the domain $D$ such
that $\left\lVert \varphi\right\rVert _{L^{2}(D)}^{2}\leq C_{D}\left\lVert \partial_{x}\varphi\right\rVert _{L^{2}(D)}^{2}$
and $\alpha=\min(\frac{\sigma^{2}}{4},\frac{\sigma^{2}}{4C_{D}})>0$.
Hence by Theorem \ref{thm:ExistenceUniquenessResult_Coercive}, there
exists a unique solution to (\ref{eq:ExpectedDurationPDE_Periodic}). 
\end{example}

\subsection{Convex Optimisation}

In Section \ref{subsec:IBVPMethods}, we showed that if the bilinear
form associated to the PDE is coercive, then Theorem \ref{thm:ExistenceUniquenessResult_Coercive}
yields a unique solution to (\ref{eq:ReversedPDE_Periodic}). However,
in general, coercivity of the associated bilinear form can be difficult
to verify. Instead, we now seek to solve (\ref{eq:ReversedPDE_Periodic})
by casting it as a convex optimisation problem with a natural cost
functional. Convex optimisation has been a standard method to study
solutions of elliptic PDEs.

In the presented convex optimisation framework, we show that the unique
minimiser of the cost functional is a solution to (\ref{eq:ReversedPDE_Periodic}).
In this approach, coercivity of the functional holds almost immediately,
provided the maximum principle holds. When the maximum principle holds,
it follows that the solution is a classical/strong solution as opposed
to the weak solution given in Theorem \ref{thm:ExistenceUniquenessResult_Coercive}.Furthermore,
we show that the convex optimisation problem can be implemented readily
by standard gradient methods.

We begin with a standard convex optimisation result on Hilbert spaces.
Let $(H,\lVert\cdot\rVert_{H}$) be a Hilbert space, $\mathscr{C}\subseteq H$
be a closed convex subset and $F:H\rightarrow\mathbb{R}$ be a functional.
The functional $F$ is said to be norm-like (or coercive) over $\mathscr{C}$
if 
\[
F(\varphi)\rightarrow\infty,\quad\text{as }\lVert\varphi\rVert_{H}\rightarrow\infty,\quad\varphi\in\mathscr{C}.
\]
The functional $F$ is Gateaux differentiable at $\varphi\in H$ if
for any $\phi_{0}\in H$, the directional derivative of $F$ at $\varphi$
in the direction $\phi_{0}$, denoted by $DF(\varphi)(\phi_{0})$,
given by 
\begin{equation}
DF(\varphi)(\phi_{0})=\lim_{\epsilon\rightarrow0}\frac{F(\varphi+\epsilon\phi_{0})-F(\varphi)}{\epsilon}\label{eq:DirectionDerivDefn}
\end{equation}
exists. By definition, the gradient of $F$ is $\frac{\delta F}{\delta\varphi}\in L^{2}(D)$
satisfying
\begin{equation}
\langle\frac{\delta F}{\delta\varphi},\phi_{0}\rangle=DF(\varphi)(\phi_{0}),\quad\phi_{0}\in H.\label{eq:GradFunctional}
\end{equation}
We shall use the following standard result from convex optimisation
theory (see e.g. \cite{ConvexAnalysis,TroltzschOptimalControlPDE}). 
\begin{lem}
\label{lem:ConvexLemma} Let $H$ be a Hilbert space and $\mathscr{C}\subseteq H$
be a closed convex set. Let $F:H\rightarrow\mathbb{R}$ be a functional
such that $F$ is convex and norm-like over $\mathscr{C}$. Assume
further that $F$ is a (lower semi)continuous and bounded from below.
Then there exists at least one $v_{0}\in\mathscr{C}$ such that $F(v_{0})=\inf_{\varphi\in\mathscr{C}}F(\varphi)$.
If $F$ is Gateaux differentiable, then for any such $v_{0}$, $DF(v_{0})(\cdot)=0$.
If $F$ is strictly convex then $v_{0}$ is unique. 
\end{lem}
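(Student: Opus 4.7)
The plan is to apply the \emph{direct method of the calculus of variations}, which is the textbook route to such existence results. First, since $F$ is bounded below on $\mathscr{C}$, the infimum $m := \inf_{\varphi \in \mathscr{C}} F(\varphi)$ is finite, so I can extract a minimizing sequence $\{\varphi_n\} \subset \mathscr{C}$ with $F(\varphi_n) \to m$. The norm-like hypothesis forces this sequence to be bounded in $H$: otherwise, along a subsequence $\lVert \varphi_{n_k} \rVert_H \to \infty$ would drive $F(\varphi_{n_k}) \to \infty$, contradicting convergence to the finite infimum $m$.

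Next, I exploit that Hilbert spaces are reflexive, so by the Banach--Alaoglu theorem the bounded minimizing sequence has a weakly convergent subsequence $\varphi_{n_k} \rightharpoonup v_0$ in $H$. Because $\mathscr{C}$ is closed and convex, Mazur's theorem gives that $\mathscr{C}$ is weakly closed, hence $v_0 \in \mathscr{C}$. The decisive step is upgrading strong lower semicontinuity to \emph{weak} lower semicontinuity, which is valid precisely because $F$ is convex (again via Mazur, which produces strong convergence of convex combinations of the $\varphi_{n_k}$). This yields
\[
F(v_0) \;\leq\; \liminf_{k \to \infty} F(\varphi_{n_k}) \;=\; m,
\]
and since $v_0 \in \mathscr{C}$ the reverse inequality $F(v_0) \geq m$ is trivial. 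This establishes the existence of a minimizer.

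For the Gateaux condition, fix any admissible direction $\phi_0$ and consider $g(\epsilon) := F(v_0 + \epsilon \phi_0)$, which attains a minimum at $\epsilon = 0$. Differentiability at $0$ combined with the one-sided bounds $g(\epsilon) \geq g(0)$ and $g(-\epsilon) \geq g(0)$ forces $g'(0) = DF(v_0)(\phi_0) = 0$ via (\ref{eq:DirectionDerivDefn}); the vector-space structure of admissible perturbations (or $v_0$ being in the interior relative to $\mathscr{C}$) ensures both $\pm \phi_0$ can be used. Finally, for uniqueness under strict convexity, if $v_0, \tilde{v}_0 \in \mathscr{C}$ were two distinct minimizers, convexity of $\mathscr{C}$ puts the midpoint $\tfrac{1}{2}(v_0 + \tilde{v}_0)$ in $\mathscr{C}$, while strict convexity yields $F\bigl(\tfrac{1}{2}(v_0 + \tilde{v}_0)\bigr) < \tfrac{1}{2} F(v_0) + \tfrac{1}{2} F(\tilde{v}_0) = m$, a contradiction.

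The main technical obstacle is the weak-lower-semicontinuity step: strong lower semicontinuity alone does not survive passing to weak limits, and the conversion requires convexity together with Mazur's lemma. All other steps (boundedness from coercivity, weak compactness from reflexivity, closedness of $\mathscr{C}$ in the weak topology, and the midpoint argument for uniqueness) are essentially bookkeeping given the standard functional-analytic toolkit.
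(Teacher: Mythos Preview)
The paper does not supply its own proof of this lemma: it is stated as a standard result from convex optimisation, with references to \cite{ConvexAnalysis,TroltzschOptimalControlPDE}. Your argument via the direct method---minimising sequence, boundedness from the norm-like condition, weak compactness from reflexivity, Mazur for weak closedness of $\mathscr{C}$ and for upgrading lower semicontinuity to weak lower semicontinuity---is precisely the textbook route found in those references, so there is nothing to compare.

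One caveat worth flagging, which you half-acknowledge: the conclusion $DF(v_{0})(\phi_{0})=0$ for \emph{all} $\phi_{0}\in H$ is only guaranteed when $v_{0}$ lies in the interior of $\mathscr{C}$ (or when $\mathscr{C}=H$). At a boundary point one generically obtains only the variational inequality $DF(v_{0})(\phi-v_{0})\geq 0$ for $\phi\in\mathscr{C}$, since $v_{0}+\epsilon\phi_{0}$ may leave $\mathscr{C}$ for one sign of $\epsilon$. Your parenthetical about ``$v_{0}$ being in the interior relative to $\mathscr{C}$'' gestures at this but does not resolve it. That said, the lemma as stated in the paper carries the same imprecision, and in the paper's application (Theorem~\ref{thm:ConvexThm}) the issue is effectively bypassed because the minimiser is shown a posteriori to satisfy $F(v_{0})=0$, which forces the fixed-point relation directly.
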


We now focus specifically on using Lemma \ref{lem:ConvexLemma} to
solve (\ref{eq:ReversedPDE_Periodic}). Recall that if Condition A1
holds then (\ref{eq:Inhomo_Reversed_IBVP}) is well-posed. We then
associate to (\ref{eq:Inhomo_Reversed_IBVP}) the natural cost functional
$F:L^{2}(D)\rightarrow\mathbb{R}$ defined by 
\begin{equation}
F(\varphi)=\frac{1}{2}\lVert\mathcal{A}\varphi-\varphi\rVert_{L^{2}(D)}^{2}=\frac{1}{2}\int_{D}[(\mathcal{A}\varphi)(x)-\varphi(x)]^{2}dx,\label{eq:CostFunctional}
\end{equation}
where $\mathcal{A}$ is given by (\ref{eq:OperatorA}). This functional
is natural to our periodic problem because if there exists $v_{0}\in L^{2}(D)$
which minimises the functional to zero, it is a solution to (\ref{eq:ReversedPDE_Periodic})
i.e. 
\[
F(v_{0})=0\quad\iff\quad\mathcal{A}v_{0}=v_{0},
\]
i.e. $v_{0}$ solves (\ref{eq:ImposedPeriodForParabolicPDE}) and
therefore is a (possibly weak) solution to (\ref{eq:ReversedPDE_Periodic}).

Optimisation briefly aside, we recommend using the cost functional
$F$ to quantify the convergence of the Banach iterates of Theorem
\ref{thm:ExistenceUniquenessResult_Coercive}.

In order to apply Lemma \ref{lem:ConvexLemma} on $F$, we recall
some properties associated to linear parabolic PDEs. Suppose that
(\ref{eq:Inhomo_Reversed_IBVP}) is well-posed. Since PDE (\ref{eq:Inhomo_Reversed_IBVP})
is linear, by the superposition principle, $\Phi(\cdot,\cdot)$ is
a linear operator, i.e. $\Phi(s,t)[\lambda_{1}\phi_{1}+\lambda_{2}\phi_{2}]=\lambda_{1}\Phi(s,t)\phi_{1}+\lambda_{2}\Phi(s,t)\phi_{2}$.
However, due to the inhomogeneous term, observe that $\mathcal{A}$
is not linear. Instead, if $\lambda_{1},\lambda_{2}\geq0$ such that
$\lambda_{1}+\lambda_{2}=1$ then 
\begin{align}
\mathcal{A}(\lambda_{1}\varphi+\lambda_{2}\psi) & =\Phi(0,T)[\lambda_{1}\varphi+\lambda_{2}\psi]+(\lambda_{1}+\lambda_{2})\int_{0}^{T}\Phi(r,T)f(r)dr=\lambda_{1}\mathcal{A}\varphi+\lambda_{2}\mathcal{A}\psi.\label{eq:Linearity_of_A}
\end{align}
Since $\bar{\tau}$ is non-negative, we consider 
\[
\mathscr{C}(D):=\{\varphi\in H_{0}^{2}(D)|\varphi\geq0\}.
\]
It is easy to verify that $\mathscr{C}(D)$ is a closed convex Hilbert
subspace of $L^{2}(D)$. 
\begin{thm}
\label{thm:FunctionalUniqueSoln}Let Condition A1 hold, $f\in C^{\gamma}(0,T;L^{2}(D))$
for some $\gamma\in(0,1)$, $f\geq0$ and $d\leq3$. Let $F:\mathscr{C}(D)\subset L^{2}(D)\rightarrow\mathbb{R}$
be defined by (\ref{eq:CostFunctional}) . Then there exists a unique
$v_{0}\in\mathscr{C}(D)$ minimising $F$. 
\end{thm}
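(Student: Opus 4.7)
The strategy is to verify the hypotheses of Lemma \ref{lem:ConvexLemma} applied to $F$ on the closed convex set $\mathscr{C}(D)$, viewed as sitting inside the Hilbert space $H_{0}^{2}(D)$ (or alternatively $L^{2}(D)$, via $L^{2}$-closure of the cone). The restriction $d\leq 3$ enters in two essential ways: Sobolev embedding $H_{0}^{2}(D)\hookrightarrow C(\bar{D})$ gives pointwise meaning to the constraint $\varphi\geq 0$, and the Rellich--Kondrachov embedding $H_{0}^{2}(D)\hookrightarrow L^{2}(D)$ is compact. The functional $F$ is manifestly bounded below by $0$, so that hypothesis is automatic.

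\textbf{Convexity and continuity.} Set
\[
\mathcal{I}:=\int_{0}^{T}\Phi(r,T)f(r)\,dr\in L^{2}(D),
\]
which is well-defined under Condition A1 and $f\in C^{\gamma}(0,T;L^{2}(D))$. The decisive structural observation is that although $\mathcal{A}$ is \emph{not} linear on $L^{2}(D)$, equation (\ref{eq:Linearity_of_A}) shows it is \emph{affine} when restricted to the convex cone $\mathscr{C}(D)$. Writing
\[
\mathcal{A}\varphi-\varphi=-(I-\Phi(0,T))\varphi+\mathcal{I},
\]
the functional $F$ is the squared $L^{2}$-norm of an affine expression in $\varphi$, hence convex on $\mathscr{C}(D)$. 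Continuity of $F$ on $L^{2}(D)$ (a fortiori lower semicontinuity) follows from boundedness of $\Phi(0,T)$ on $L^{2}(D)$ by standard linear parabolic theory under Condition A1, together with $\mathcal{I}\in L^{2}(D)$.

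\textbf{Norm-likeness (the main obstacle).} I need $F(\varphi)\to\infty$ as $\|\varphi\|\to\infty$ in $\mathscr{C}(D)$. By the reverse triangle inequality,
\[
\|\mathcal{A}\varphi-\varphi\|_{L^{2}(D)}\geq\|(I-\Phi(0,T))\varphi\|_{L^{2}(D)}-\|\mathcal{I}\|_{L^{2}(D)},
\]
so it suffices to show that $I-\Phi(0,T)$ is bounded below on $L^{2}(D)$, equivalently that $1$ lies in the resolvent set of $\Phi(0,T)$. The plan is to replicate the Krein--Rutman argument already used in Proposition \ref{prop:ExistenceUniquness_PeriodicSoln}, but on $L^{2}(D)$ instead of $W_{0}^{2,p}(D)$: parabolic smoothing yields $\Phi(0,T):L^{2}(D)\to H_{0}^{2}(D)$ boundedly, and composing with the compact embedding $H_{0}^{2}(D)\hookrightarrow L^{2}(D)$ (using $d\leq 3$) shows $\Phi(0,T)$ is compact on $L^{2}(D)$; positivity preservation by the weak maximum principle together with irreducibility (from the surrounding SDE assumptions or direct parabolic Harnack) then lets Krein--Rutman conclude $\rho(\Phi(0,T))\in(0,1)$. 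Consequently $I-\Phi(0,T)$ is invertible with bounded inverse, so $\|(I-\Phi(0,T))\varphi\|_{L^{2}}\geq c\|\varphi\|_{L^{2}}$ for some $c>0$, and norm-likeness follows.

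\textbf{Conclusion and uniqueness.} Lemma \ref{lem:ConvexLemma} then yields existence of a minimizer $v_{0}\in\mathscr{C}(D)$. For uniqueness, injectivity of $I-\Phi(0,T)$ (a consequence of its invertibility) makes the affine map $\varphi\mapsto -(I-\Phi(0,T))\varphi+\mathcal{I}$ injective on $\mathscr{C}(D)$, so $F$ is strictly convex there, and Lemma \ref{lem:ConvexLemma} delivers uniqueness. The anticipated difficulty is really only in the norm-like step: all other ingredients are immediate from linearity/affinity of the evolution operator and standard parabolic regularity; obtaining the spectral gap for $\Phi(0,T)$ on $L^{2}(D)$ rather than on $W_{0}^{2,p}(D)$ is where the hypothesis $d\leq 3$ is genuinely used.
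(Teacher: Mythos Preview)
Your proposal is essentially correct but departs from the paper's argument in the crucial norm-likeness step, and this departure is worth flagging because it partially undermines the raison d'\^etre of the convex-optimisation section.

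The paper does \emph{not} invoke Krein--Rutman or any spectral bound to get coercivity of $F$. Instead it uses the cone structure directly: for $\varphi\in\mathscr{C}(D)$ one has $\varphi\geq 0$, and since $d\leq 3$ gives $p=2>d/2$ in (\ref{eq:RegularSoln}), the solution is regular enough for the weak maximum principle to yield $\mathcal{A}\varphi\geq 0$. With both $\varphi$ and $\mathcal{A}\varphi$ pointwise nonnegative, Young's inequality $2(\mathcal{A}\varphi)\varphi\leq \epsilon\varphi^{2}+\epsilon^{-1}(\mathcal{A}\varphi)^{2}$ is applied inside the integrand to obtain
\[
F(\varphi)\;\geq\;\tfrac{1-\epsilon}{2}\,\lVert\varphi\rVert_{L^{2}(D)}^{2}+\tfrac{1-\epsilon^{-1}}{2}\,\lVert\mathcal{A}\varphi\rVert_{L^{2}(D)}^{2},
\]
from which the paper concludes $F(\varphi)\to\infty$. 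Strict convexity is argued directly from the strict convexity of $t\mapsto t^{2}$ combined with the affine identity (\ref{eq:Linearity_of_A}), without appealing to injectivity of $I-\Phi(0,T)$.

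Your route---establishing $\rho(\Phi(0,T))<1$ on $L^{2}(D)$ via Krein--Rutman and then using the resulting lower bound for $I-\Phi(0,T)$---is legitimate, but it is precisely the spectral argument of Proposition~\ref{prop:ExistenceUniquness_PeriodicSoln} transplanted to $L^{2}$. Once you have that, the fixed point $v_{0}=(I-\Phi(0,T))^{-1}\mathcal{I}$ follows immediately and the variational detour through Lemma~\ref{lem:ConvexLemma} becomes superfluous. The paper's intent in this subsection is to offer an approach that \emph{avoids} verifying the spectral gap (or coercivity of $B_{R}$), replacing it with the more elementary maximum-principle observation on the positive cone; your proof does not preserve that separation of methods.

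One small correction: the compact embedding $H_{0}^{2}(D)\hookrightarrow L^{2}(D)$ holds by Rellich--Kondrachov in every dimension for bounded $D$, so that is not where $d\leq 3$ enters. In the paper the restriction $d\leq 3$ is used solely to place $p=2$ above $d/2$, so that (\ref{eq:RegularSoln}) furnishes enough regularity for the classical maximum principle to apply to $\mathcal{A}\varphi$.
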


\begin{proof}
Since Condition A1 holds, the IBVP (\ref{eq:Inhomo_Reversed_IBVP})
is well-posed. Hence the operators $\mathcal{A}$ and $\Phi(s,t)$
and thus $F$ are all well-defined. We show that $F$ satisfies the
assumptions of Lemma \ref{lem:ConvexLemma}. Obviously $F\geq0$ and
hence bounded from below. By the well-posedness of (\ref{eq:Inhomo_Reversed_IBVP}),
it is clear that $\varphi\rightarrow\mathcal{\mathcal{A\varphi}}$
and moreover $\varphi\rightarrow\mathcal{\mathcal{A\varphi}}-\varphi$
are continuous from $L^{2}(D)$ to $L^{2}(D)$. It follows that $F$
is continuous. Utilising the strong convexity of the quadratic function
and (\ref{eq:Linearity_of_A}), we show the strong convexity of $F$:
for any $\lambda\in(0,1)$ and $\varphi,\psi\in\mathscr{C}(D)$, we
have that 
\begin{align*}
F(\lambda\varphi+(1-\lambda)\psi) & =\frac{1}{2}\int_{D}[\mathcal{A}(\lambda\varphi+(1-\lambda)\psi)-(\lambda\varphi+(1-\lambda)\psi)]^{2}dx\\
 & =\frac{1}{2}\int_{D}[\lambda\left(\mathcal{A}\varphi-\varphi\right)+(1-\lambda)(\mathcal{A}\psi-\psi)]^{2}dx\\
 & <\frac{1}{2}\int_{D}\left[\lambda\left(\mathcal{A}\varphi-\varphi\right)^{2}+(1-\lambda)(\mathcal{A}\psi-\psi)^{2}\right]dx\\
 & =F(\varphi)+(1-\lambda)F(\psi).
\end{align*}
Since $d\leq3$, by Sobolev embedding and Schauder estimates, it follows
from (\ref{eq:RegularSoln}) (as $\frac{d}{2}<p=2$) that for any
$\varphi\in\mathscr{C}(D)\subset H^{2}(D)$ and $f\geq0$, the solution
to (\ref{eq:Inhomo_Reversed_IBVP}) with initial condition $\varphi$
is regular. Therefore, the maximum principle (\ref{eq:MinimumPrinciple})
applies. Hence together with the homogeneous Dirichlet boundary conditions,
it follows that $\mathcal{A}\varphi\geq0$. Therefore, for any $x\in D$
and $\epsilon\in(0,1)$, Young's inequality yields that 
\begin{align*}
F(\varphi) & =\frac{1}{2}\int_{D}((\mathcal{A}\varphi)^{2}-2(\mathcal{A}\varphi)\varphi+\varphi^{2})dx\\
 & \geq\frac{1}{2}\int_{D}((1-\epsilon)\varphi^{2}+(1-\epsilon^{-1})(\mathcal{A}\varphi)^{2})dx\\
 & =\frac{1-\epsilon}{2}\lVert\varphi\rVert_{L^{2}(D)}^{2}+\frac{1-\epsilon^{-1}}{2}\lVert\mathcal{A}\varphi\rVert_{L^{2}(D)}^{2}.
\end{align*}
Hence it follows that $F(\varphi)\rightarrow\infty$ as $\lVert\varphi\rVert_{L^{2}(D)}\rightarrow\infty$.
Then Lemma \ref{lem:ConvexLemma} yields a unique solution $v_{0}\in\mathscr{C}(D)$
minimising $F$. 
\end{proof}
In the following proposition, we derive an expression for the directional
derivative $DF(\varphi)(\phi_{0})$. While it is then straightforward
to apply the maximum principle to show that $DF(\varphi)(\cdot)$
is a linear continuous operator to deduce existence and uniqueness
of the gradient (via Riesz representation theorem), we employ numerical
analysts' adjoint state method (see e.g. \cite{AdjointIntro,AdjointRigour,AdjointStateMethodEfficient,AdjointStateMethodReview})
to attain an expression for the gradient directly. From a numerical
perspective, the gradient allows us to apply gradient methods to iteratively
minimise $F$. Numerically, we note that it is not necessary to use
adjoint state method to compute the gradient. However, it is well-known
that adjoint state method is (generally) computationally efficient
see e.g. \cite{AdjointStateMethodEfficient}. It is noted that comparing
to Banach fixed point iterates of Theorem \ref{thm:ExistenceUniquenessResult_Coercive},
the adjoint state method is computationally less efficient because
a pair of IBVP is required to be solved rather than one.

To employ the adjoint state method, we recall that $L^{*}(s)$ is
the Fokker-Planck operator given by (\ref{eq:FokkerPlanckGeneral}).
Akin to (\ref{eq:FokkerPlanck_AbsorbingBoundaries}), if Condition
A2 holds, then 
\begin{equation}
\begin{cases}
\partial_{s}w=L^{*}(s)w & \text{in }D_{T},\\
w=0 & \text{on }[0,T]\times\partial D\\
w(0,\cdot)=w_{0}, & \text{on }D,
\end{cases},\label{eq:PDE_for_w}
\end{equation}
is well-posed for any $w_{0}\in L^{2}(D)$\cite{Pazy_Semigroup,Evans_PDEs,Amann_Parabolic}.
Hence, akin to (\ref{eq:TimeReversedHomo_IBVP}) and (\ref{eq:EvolOpMapping}),
we define the evolution operator $W$ for (\ref{eq:PDE_for_w}) i.e.
\begin{equation}
w(s,\cdot)=W(0,s)w_{0},\quad s\geq0,\label{eq:OperatorW}
\end{equation}
where $W(0,s):L^{2}(D)\rightarrow H^{2}(D)\cap H_{0}^{1}(D)$.

The following proposition was inspired by \cite{AmbroseWilkening_NumericalGradientMethod,BristeauGlowPer_NumericalGradient}
in employing the adjoint state method for periodic solutions of the
Benjamin-Ono and autonomous evolution equations respectively. It plays
a significant role to prove Theorem \ref{thm:ConvexThm}.
\begin{prop}
\label{prop:FunctionalDerivatives}Let Condition A2 hold and $F$
be defined by (\ref{eq:CostFunctional}). Then for any $\varphi\in L^{2}(D)$,
we have the expressions for the directional derivative 
\begin{equation}
DF(\varphi)(\phi_{0})=\int_{D}(\mathcal{A}\varphi-\varphi)(\Phi(0,T)\phi_{0}-\phi_{0})dx,\quad\phi_{0}\in L^{2}(D),\label{eq:DirectionalDerivOfF}
\end{equation}
and the gradient 
\begin{equation}
\frac{\delta F}{\delta\varphi}=W(0,T)w_{0}-w_{0},\label{eq:Gradient_Functional_Expression}
\end{equation}
with initial condition $w_{0}=\mathcal{A}\varphi-\varphi$.
\end{prop}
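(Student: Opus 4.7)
My plan has two parts: the directional derivative formula (\ref{eq:DirectionalDerivOfF}) follows by a direct limit computation, while the gradient formula (\ref{eq:Gradient_Functional_Expression}) is obtained via an adjoint state argument in the spirit of \cite{AmbroseWilkening_NumericalGradientMethod,BristeauGlowPer_NumericalGradient}. First, I would exploit the partial linearity (\ref{eq:Linearity_of_A}), or more directly the fact that the inhomogeneous term in (\ref{eq:OperatorA}) is independent of its argument so that $\mathcal{A}(\varphi + \epsilon\phi_0) - \mathcal{A}\varphi = \epsilon\Phi(0,T)\phi_0$, to expand
\[
\frac{F(\varphi + \epsilon\phi_0) - F(\varphi)}{\epsilon} = \int_D (\mathcal{A}\varphi - \varphi)(\Phi(0,T)\phi_0 - \phi_0)\, dx + \frac{\epsilon}{2}\int_D (\Phi(0,T)\phi_0 - \phi_0)^2\, dx.
\]
Sending $\epsilon\to 0$ yields (\ref{eq:DirectionalDerivOfF}); the only check is $\mathcal{A}\varphi - \varphi \in L^2(D)$, which is immediate from the $L^2$-well-posedness of IBVP (\ref{eq:Inhomo_Reversed_IBVP}) under Condition A2.

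The crux is identifying the gradient. Observing (\ref{eq:DirectionalDerivOfF}), if I can rewrite $\int_D (\mathcal{A}\varphi - \varphi)\Phi(0,T)\phi_0\, dx$ as $\langle \Psi, \phi_0\rangle_{L^2(D)}$ for some $\Psi \in L^2(D)$, then (\ref{eq:GradFunctional}) together with Riesz representation gives $\frac{\delta F}{\delta\varphi} = \Psi - w_0$ with $w_0 := \mathcal{A}\varphi - \varphi$. To this end, I would define $w(s)$ to be the solution of the Fokker-Planck IBVP (\ref{eq:PDE_for_w}) with initial data $w_0$, so that $w(s) = W(0,s)w_0$, and let $\phi(s) := \Phi(0,s)\phi_0$. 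Consider the pairing
\[
I(s) := \int_D w(T-s)\,\phi(s)\, dx, \qquad s \in [0,T].
\]
Differentiating in $s$ and using $\partial_s\phi = L_R(s)\phi$, $\partial_s w = L^*(s)w$, together with the key identity $L_R^*(s) = L^*(T-s)$ (which is immediate from (\ref{eq:ReversedOperator}) and (\ref{eq:FokkerPlanckGeneral})), two integrations by parts with all boundary terms vanishing by the homogeneous Dirichlet conditions give $I'(s) = 0$. Therefore $I(0) = I(T)$, i.e.
\[
\int_D W(0,T)w_0 \cdot \phi_0\, dx = \int_D w_0 \cdot \Phi(0,T)\phi_0\, dx = \int_D (\mathcal{A}\varphi - \varphi)\Phi(0,T)\phi_0\, dx.
\]
Substituting this into (\ref{eq:DirectionalDerivOfF}) yields $DF(\varphi)(\phi_0) = \int_D [W(0,T)w_0 - w_0]\phi_0\, dx$, and (\ref{eq:Gradient_Functional_Expression}) follows from (\ref{eq:GradFunctional}).

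The main technical obstacle is justifying $I'(s) = 0$ and the integrations by parts rigorously. For any $s \in (0,T)$, parabolic smoothing under Condition A2 places $w(T-s)$ and $\phi(s)$ in $H^2(D)\cap H_0^1(D)$, so the two integrations by parts pick up no boundary terms and all derivatives lie in $L^2(D)$; this makes $I'(s) = 0$ genuine on $(0,T)$. At the endpoints $s = 0$ and $s = T$, we only have $L^2(D)$-convergence of $\phi(s)$ and $w(T-s)$ to their initial data, so $I(0)$ and $I(T)$ must be interpreted via continuity of the map $s \mapsto (w(T-s), \phi(s)) \in L^2(D)\times L^2(D)$, which is precisely what the parabolic well-posedness theory provides. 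Once this is in place, the duality argument is routine and the proposition follows.
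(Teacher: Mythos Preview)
Your proposal is correct and follows essentially the same adjoint-state route as the paper: the paper introduces an auxiliary backward problem $-\partial_s w_R = L_R^*(s)w_R$ with terminal data $w_T=\mathcal{A}\varphi-\varphi$, multiplies the homogeneous equation for $\phi$ by $w_R$ and integrates by parts over $D_T$ to obtain the duality identity (\ref{eq:ByPartsTerm_w0}), whereas you reach the same identity by pairing $\phi(s)$ with $w(T-s)$ and showing $I'(s)=0$ via $L_R^*(s)=L^*(T-s)$. These are the same argument in slightly different packaging, and your remarks on endpoint regularity make explicit a point the paper leaves tacit.
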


\begin{proof}
Utilising the linearity properties of $\mathcal{A}$ and $\Phi(0,T)$,
from (\ref{eq:OperatorA}) and (\ref{eq:DirectionDerivDefn}), we
have 
\begin{align*}
DF(\varphi)(\phi_{0}) & =\lim_{\epsilon\rightarrow0}\frac{1}{2}\int_{D}\frac{(\mathcal{A}(\varphi+\epsilon\phi_{0})-(\varphi+\epsilon\phi_{0}))^{2}-(\mathcal{A}\varphi-\varphi)^{2}}{\epsilon}dx\\
 & =\lim_{\epsilon\rightarrow0}\frac{1}{2}\int_{D}\frac{((\mathcal{A}\varphi-\varphi)+\epsilon(\Phi(0,T)\phi_{0}-\phi_{0}))^{2}-(\mathcal{A}\varphi-\varphi)^{2}}{\epsilon}dx.
\end{align*}
Hence (\ref{eq:DirectionalDerivOfF}) follows by collecting terms
and taking the limit. We now wish to find $\frac{\delta F}{\delta\varphi}\in L^{2}(D)$
such that

\[
DF(\varphi)(\phi_{0})=\langle\frac{\delta F}{\delta\varphi},\phi_{0}\rangle=\int_{D}\frac{\delta F}{\delta\varphi}(x)\phi_{0}(x)dx,\quad\phi_{0}\in L^{2}(D).
\]
To compute the gradient, consider $w_{R}(s,x)$ satisfying the adjoint
equation of PDE (\ref{eq:TimeReversedHomo_IBVP})

\begin{equation}
\begin{cases}
-\partial_{s}w_{R}=L_{R}^{*}(s)w_{R} & \text{in }D_{T},\\
w_{R}=0, & \text{on }[0,T]\times\partial D,\\
w_{R}(T,\cdot)=w_{T}, & \text{on }D.
\end{cases}\label{eq:PDE_for_wR}
\end{equation}
for some terminal condition $w_{T}\in L^{2}(D)$. Note that (\ref{eq:PDE_for_wR})
is a backward equation. However, as terminal conditions are provided,
(\ref{eq:PDE_for_wR}) is well-posed provided Condition A2 are satisfied.
This contrasts to the backward equation associated to (\ref{eq:ExpectedDurationPDE_Periodic})
as a IBVP with initial conditions. For clarity, we introduce $w(s,\cdot)=w_{R}(T-s,\cdot)$.
Then it is clear that $w$ satisfies (\ref{eq:PDE_for_w}), since
$L_{R}^{*}(T-s)=L^{*}(T-(T-s))=L^{*}(s)$ by (\ref{eq:ReversedOperator}).
In this form, it is clear that (\ref{eq:PDE_for_w}) and equivalently
(\ref{eq:PDE_for_wR}) are well-posed provided Condition A2 is satisfied.
With the repeated time-reversal, $w(s,x)$ is understood to solve
the Fokker-Planck equation forward in time.

Let $\phi$ be the homogeneous solution satisfying (\ref{eq:TimeReversedHomo_IBVP})
with initial conditions $\phi(0,\cdot)=\phi_{0}$. By multiplying
$\phi$ by $w_{R}$ and integrating by parts over $D_{T}$, we have
by (\ref{eq:PDE_for_wR}) 
\begin{align*}
0 & =\int_{0}^{T}\int_{D}(\partial_{s}\phi(s,x)-L_{R}(s)\phi(s,x))\cdot w_{R}(s,x)dxds\\
 & =\int_{D}\left[\phi(s,x)w_{R}(s,x)\right]_{s=0}^{T}dx-\int_{0}^{T}\int_{D}\phi(s,x)\partial_{s}w_{R}(s,x)dxds-\int_{0}^{T}\int_{D}\phi(s,x)L_{R}^{*}(s)w_{R}(s,x)dxds\\
 & =\int_{D}\phi(T,x)w_{R}(T,x)dx-\int_{D}\phi_{0}(x)w_{R}(0,x)dx.
\end{align*}
That is, in terms of $w$ and $\Phi$, 
\begin{equation}
\int_{D}\Phi(0,T)\phi_{0}(x)\cdot w(0,x)dx=\int_{D}\phi_{0}(x)w(T,x)dx.\label{eq:ByPartsTerm_w0}
\end{equation}
Impose the initial condition, 
\begin{equation}
w(0,\cdot)=w_{T}=\mathcal{A}\varphi-\varphi.\label{eq:InitialCdn_w0}
\end{equation}
Then it follows from (\ref{eq:DirectionalDerivOfF}), (\ref{eq:ByPartsTerm_w0})
and (\ref{eq:InitialCdn_w0}) that 
\begin{align}
DF(\varphi)(\phi_{0})= & \int_{D}w_{0}(x)\Phi(0,T)\phi_{0}(x)dx-\int_{D}w_{0}(x)\phi_{0}(x)dx=\int_{D}\left[w(T,x)-w_{0}(x)\right]\phi_{0}(x)dx.\label{eq:DirectionalDeriv_adjointed}
\end{align}
Since $\phi_{0}$ was arbitrary, by (\ref{eq:OperatorW}), we attain
(\ref{eq:Gradient_Functional_Expression}).
\end{proof}
We note that while Lemma \ref{lem:ConvexLemma} yields a unique minimiser,
it was not immediate whether $F$ was minimised to zero. In the following
theorem, we show indeed that the unique minimiser of $F$ indeed minimises
$F$ to zero. 
\begin{thm}
\label{thm:ConvexThm} Let Condition A2 hold and $d\leq3$, $f\in C^{\gamma}(0,T;H^{2}(D))$
for some $\gamma\in(0,1)$ and $f\geq0$. Then $v_{0}\in\mathscr{C}(D)$
obtained in Theorem \ref{thm:FunctionalUniqueSoln} is the unique
function in $L^{2}(D)$ satisfying (\ref{eq:ImposedPeriodForParabolicPDE}).
Moreover there exists a unique $v\in C(\bar{D}_{T})\cap C^{1,2}(D_{T})$
satisfying (\ref{eq:ReversedPDE_Periodic}). 
\end{thm}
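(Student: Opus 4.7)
The plan is to combine the first-order optimality condition for the minimiser $v_{0}$ from Theorem \ref{thm:FunctionalUniqueSoln} with the adjoint-based gradient formula of Proposition \ref{prop:FunctionalDerivatives}, then invoke a Krein--Rutman type spectral argument (in the spirit of Proposition \ref{prop:ExistenceUniquness_PeriodicSoln}) to force $F(v_{0})=0$. Uniqueness of the fixed point in $L^{2}(D)$ will reduce to a second spectral-radius argument, and classical regularity of $v$ then follows directly from the parabolic smoothing statement (\ref{eq:RegularSoln}).

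First I would note that Proposition \ref{prop:FunctionalDerivatives} delivers Gateaux differentiability of $F$, so Lemma \ref{lem:ConvexLemma} applied to the minimiser $v_{0}\in\mathscr{C}(D)$ from Theorem \ref{thm:FunctionalUniqueSoln} yields $DF(v_{0})(\phi_{0})=0$ for every $\phi_{0}\in L^{2}(D)$. Writing $w_{0}:=\mathcal{A}v_{0}-v_{0}$, the gradient formula (\ref{eq:Gradient_Functional_Expression}) recasts this stationarity as the fixed point identity $W(0,T)w_{0}=w_{0}$ in $L^{2}(D)$. The crucial step is to conclude $w_{0}=0$: under Condition A2 the operator $W(0,T)$ is the one-period evolution operator of the Fokker--Planck PDE (\ref{eq:PDE_for_w}) with absorbing boundaries, hence compact, positive, and dissipating mass through $\partial D$. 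The same Krein--Rutman argument used for $\Phi(0,T)$ in Proposition \ref{prop:ExistenceUniquness_PeriodicSoln} therefore applies to $W(0,T)$ --- essentially the $L^{2}$-adjoint of $\Phi(0,T)$, hence sharing its spectrum --- giving $\rho(W(0,T))\in(0,1)$. Thus $1$ lies in the resolvent set of $W(0,T)$, $w_{0}=0$, and therefore $\mathcal{A}v_{0}=v_{0}$, i.e. $F(v_{0})=0$.

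For uniqueness in $L^{2}(D)$, any other fixed point $v_{0}'\in L^{2}(D)$ of $\mathcal{A}$ satisfies $v_{0}-v_{0}'=\Phi(0,T)(v_{0}-v_{0}')$ by the partial linearity (\ref{eq:Linearity_of_A}), and the Krein--Rutman statement for $\Phi(0,T)$ already recorded in Proposition \ref{prop:ExistenceUniquness_PeriodicSoln} forces $v_{0}=v_{0}'$. Finally, I would define $v$ by the Duhamel formula (\ref{eq:SolnToPeriodicPDE_Variation_of_Const}). Since $v_{0}\in\mathscr{C}(D)\subset H_{0}^{2}(D)$, $d\leq 3$ so $p=2>d/2$, $\partial D\in C^{2+\theta}$, and $f\in C^{\gamma}(0,T;H^{2}(D))$, the regularity result (\ref{eq:RegularSoln}) yields $v\in C(\bar{D}_{T})\cap C^{1+\theta/2,2+\theta}((0,T]\times\bar{D})$; the $T$-periodic matching $v(T,\cdot)=v_{0}=v(0,\cdot)$ together with the $T$-periodicity of the coefficients then extends this interior smoothness across the seam $s=0$, giving $v\in C^{1,2}(D_{T})$.

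The main obstacle I foresee is transferring the Krein--Rutman spectral statement of Proposition \ref{prop:ExistenceUniquness_PeriodicSoln}, formulated in the $L^{p}$ setting with $p>d$, across to $W(0,T)$ acting on $L^{2}(D)$. I expect this to be routine via duality between $\Phi(0,T)$ and $W(0,T)$ combined with the parabolic smoothing that immediately lifts any $L^{2}$ eigenfunction into the smoother Sobolev scale on which the Hess--Krein--Rutman framework was originally set up.
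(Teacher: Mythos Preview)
Your proposal is correct and follows the paper's overall architecture (first-order optimality $\Rightarrow$ $w_{0}$ is a fixed point of $W(0,T)$ $\Rightarrow$ $w_{0}=0$ $\Rightarrow$ regularity via (\ref{eq:RegularSoln})), but the two arguments diverge at the key step of forcing $w_{0}=0$ and at the uniqueness step.

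The paper does \emph{not} invoke Krein--Rutman for $W(0,T)$. Instead it argues directly in $L^{\infty}$: after bootstrapping $w_{0}$ to $L^{\infty}(D)$ via (\ref{eq:RegularSoln}), it writes $w_{0}(x)=\int_{D}p_{D}(0,T,x,y)w_{0}(y)\,dy$ with $p_{D}$ the absorbing-boundary Fokker--Planck kernel, observes $\int_{D}p_{D}(0,T,x,y)\,dy\le P(0,T,x,D)\le\epsilon<1$ with $\epsilon$ the probabilistic constant from Lemma \ref{lem:AlmostSurelyFinite_Eta_and_Expectation}, and concludes $\lVert w_{0}\rVert_{\infty}\le\epsilon\lVert w_{0}\rVert_{\infty}$. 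For uniqueness in $L^{2}(D)$ the paper likewise avoids a spectral argument: it bootstraps any competing fixed point $\tilde{v}_{0}$ into $H_{0}^{2}(D)$ via (\ref{eq:RegularSoln}), then uses the von Neumann / maximum-principle representation (\ref{eq:v0_is_nonnegative}) to show $\tilde{v}_{0}\ge 0$, forcing $\tilde{v}_{0}\in\mathscr{C}(D)$ where uniqueness is already known.

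Your route---identifying $W(0,T)=\Phi(0,T)^{*}$ on $L^{2}(D)$ via (\ref{eq:ByPartsTerm_w0}) and then importing $\rho(\Phi(0,T))<1$ from Proposition \ref{prop:ExistenceUniquness_PeriodicSoln}---is cleaner and handles both $w_{0}=0$ and uniqueness with a single tool; the obstacle you flag (Hess's result is stated for $p>d$) is indeed minor, since $\Phi(0,T)$ is compact and any $L^{2}$-eigenfunction is immediately smoothed into $W_{0}^{2,p}$. The paper's route, by contrast, is more self-contained within the SDE--PDE duality theme: it ties the contraction constant back to the probabilistic mass-loss bound of Lemma \ref{lem:AlmostSurelyFinite_Eta_and_Expectation} rather than to an abstract spectral statement.
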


\begin{proof}
By Theorem \ref{thm:FunctionalUniqueSoln}, the functional $F$ has
a unique minimiser $v_{0}\in\mathscr{C}(D)$. By Lemma \ref{lem:ConvexLemma}
and (\ref{eq:DirectionalDeriv_adjointed}), it follows that 
\[
DF(v_{0})(\phi_{0})=\int_{D}(w(T,x)-w_{0}(x))\phi_{0}(x)dx=0,\quad\phi_{0}\in L^{2}(D).
\]
By the fundamental lemma of calculus of variations, we have by (\ref{eq:OperatorW})
\[
0\equiv w(T,\cdot)-w_{0}(\cdot)=W(0,T)w_{0}-w_{0},
\]
i.e. $w_{0}$ is a fixed point of $W(0,T)$. Clearly $w_{0}\equiv0$
is a fixed point of $W(0,T)$. Let $w_{0}\in H^{2}(D)$ be another
fixed point solution to $W(0,T)$ and define $w(s,\cdot)$ by (\ref{eq:OperatorW}).
With $d\leq3$, by (\ref{eq:RegularSoln}), it follows that $w\in C(\bar{D}_{T})\cap C^{1+\frac{\theta}{2},2+\theta}((0,T]\times\bar{D})$.
In fact, since $D$ is bounded, $w(s,\cdot)\in L^{\infty}(D)$ for
$s\in[0,T]$. Note that $p_{D}$ of (\ref{eq:FokkerPlanck_AbsorbingBoundaries})
is a fundamental solution of (\ref{eq:PDE_for_w}), hence since $w_{0}$
is a fixed point of $W(0,T)$, it follows that 
\[
w_{0}(x)=\int_{D}p_{D}(0,T,x,y)w_{0}(y)dy,\quad x\in D.
\]
Due to the absorbing boundaries of (\ref{eq:PDE_for_w}), note that
for any $\Gamma\in\mathcal{B}(\mathbb{R}^{d})$, 
\begin{align*}
P_{D}(s,t,x,\Gamma) & :=\int_{D\cap\Gamma}p_{D}(s,t,x,y)dy\\
 & =\mathbb{P}(\{X_{t}^{s,x}\in D\cap\Gamma\}\cap\bigcap_{r=s}^{t}\{X_{r}^{s,x}\in D\})\\
 & \leq\mathbb{P}(\{X_{t}^{s,x}\in D\cap\Gamma\})\\
 & =P(s,t,x,D).
\end{align*}
Hence with $\epsilon\in(0,1)$ from Lemma \ref{lem:AlmostSurelyFinite_Eta_and_Expectation},
it follows that 
\[
\lVert w_{0}\rVert_{\infty}\leq\lVert w_{0}\rVert_{\infty}\int_{D}p(0,T,x,y)dy\leq\epsilon\lVert w_{0}\rVert_{\infty}.
\]
Thereby deducing $0\in H^{2}(D)$ is the only fixed point of $W(0,T)$.
Therefore, from (\ref{eq:InitialCdn_w0}), $v_{0}\in\mathscr{C}(D)$
is the unique minimiser such that $\mathcal{A}v_{0}=v_{0}$ and $F(v_{0})=0$.
It follows then from (\ref{eq:RegularSoln}) that 
\[
v(s,x):=\Phi(0,s)v_{0}(x)+\int_{0}^{s}\Phi(r,s)f(r)dr\in C(\bar{D}_{T})\cap C^{1+\frac{\theta}{2},2+\theta}((0,T]\times\bar{D})
\]
satisfies (\ref{eq:ReversedPDE_Periodic}). We show that $v_{0}$
is the unique fixed point of $\mathcal{A}$ in the entire $L^{2}(D)$.
Indeed suppose there exists another solution $\tilde{v}_{0}\in L^{2}(D)\backslash\mathscr{C}(D)$
such that $\tilde{v}_{0}=\mathcal{A}\tilde{v}_{0}$. By (\ref{eq:RegularSoln}),
$\mathcal{A}\tilde{v}_{0}\in C^{2+\theta}(\bar{D})\subset H^{2}(\bar{D})$
and satisfies the boundary conditions i.e. $\tilde{v}_{0}\in H_{0}^{2}(D)$.
Since $H^{2}(D)\ni f\geq0$, it follows from (\ref{eq:v0_is_nonnegative})
that $\tilde{v}_{0}\geq0$ i.e. $\tilde{v}_{0}\in\mathscr{C}(D)$.
Since uniqueness already holds in $\mathscr{C}(D)$, we conclude the
uniqueness of $v_{0}$ satisfying (\ref{eq:ImposedPeriodForParabolicPDE})
extends to $L^{2}(D)$.
\end{proof}
We summarise the PDE results in the context of expected duration $\bar{\tau}$
of SDEs. It will be clear that the SDE coefficients assumptions sufficiently
implies Conditions A2 required for Theorem \ref{thm:ConvexThm}. 
\begin{thm}
Let $d\le3$ and $D\subset\mathbb{R}^{d}$ be a non-empty open bounded
set. Assume that the $T$-periodic SDE (\ref{eq:GeneralSDE_NonAuton})
satisfies (\ref{eq:RegularityCondition}), (\ref{eq:LocallySmoothDrift}),
(\ref{eq:LocallySmoothSigma}) and (\ref{eq:Sigma_and_inverse_bounded}).
Then the expected duration $\bar{\tau}$ exists and is unique in $C(0,T;L^{2}(D))$.
In fact, $\bar{\tau}\in C(\bar{D}_{T})\cap C^{1,2}(D_{T})$ is non-negative
and non-trivial. 
\end{thm}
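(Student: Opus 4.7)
The plan is to assemble the pieces already established in the paper in the natural order: (i) derive the PDE satisfied by $\bar{\tau}$, (ii) rephrase the problem in the time-reversed setting where the PDE well-posedness theory of Section 3 applies, and (iii) invoke Theorem \ref{thm:ConvexThm} to obtain existence, uniqueness and regularity of the solution, and therefore of $\bar{\tau}$.

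First I would observe that the SDE hypotheses $(\ref{eq:RegularityCondition})$, $(\ref{eq:LocallySmoothDrift})$, $(\ref{eq:LocallySmoothSigma})$ and $(\ref{eq:Sigma_and_inverse_bounded})$ are exactly those of Lemma \ref{lem:AlmostSurelyFinite_Eta_and_Expectation}, so $\eta(s,x)$ is almost surely finite and has finite first and second moments on the bounded domain $D$. By Theorem \ref{thm:ExpectedDurationPDE_Theorem}, $\bar{\tau}$ is then a $T$-periodic solution of $(\ref{eq:ExpectedDurationPDE_Periodic})$ with source $-1$; setting $v(s,x)=\bar{\tau}(T-s,x)$ converts this into the forward periodic problem $(\ref{eq:ReversedPDE_Periodic})$ with $f\equiv 1$.

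Next I would verify that the SDE conditions imply Condition A2 on the bounded set $\bar{D}$ needed to apply Theorem \ref{thm:ConvexThm}. Since $D$ is bounded, $\bar{D}\subset B_n$ for some $n$, so $(\ref{eq:LocallySmoothSigma})$ makes the entries $a^{ij}=(\sigma\sigma^T)^{ij}$ smooth and bounded on $\bar{D}_T$, and $(\ref{eq:LocallySmoothDrift})$ together with $(\ref{eq:RegularityCondition})$ provides the required H\"older regularity of $b^i$ on $\bar{D}_T$. Uniform ellipticity $(\ref{eq:UniformlyElliptic})$ follows from $(\ref{eq:Sigma_and_inverse_bounded})$, because $\lVert\sigma^{-1}\rVert_\infty<\infty$ yields the lower bound $\alpha=\lVert\sigma^{-1}\rVert_\infty^{-2}>0$ on the spectrum of $a(s,x)$. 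The remaining piece is the boundary regularity $\partial D\in C^{2+\theta}$; this is a standard assumption on the exit domain and, although not explicitly restated, is used implicitly throughout Section 3 and therefore taken as given for the exit problem. Finally, $f\equiv 1\in C^{\gamma}(0,T;H^2(D))$ is trivially nonnegative, and $d\le 3$ is assumed directly.

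With Condition A2, $d\le 3$, and $0\le f\in C^\gamma(0,T;H^2(D))$ satisfied, Theorem \ref{thm:ConvexThm} yields a unique $v\in C(\bar{D}_T)\cap C^{1,2}(D_T)$ solving $(\ref{eq:ReversedPDE_Periodic})$. Reversing time gives a $\tilde v(s,x):=v(T-s,x)$ in the same regularity class that satisfies $(\ref{eq:ExpectedDurationPDE_Periodic})$; since $\bar{\tau}$ is also a $T$-periodic solution by Lemma \ref{lem:TauIsPeriodic}, uniqueness in $C(0,T;L^2(D))$ forces $\bar{\tau}=\tilde v$, and hence $\bar{\tau}\in C(\bar{D}_T)\cap C^{1,2}(D_T)$. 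Non-negativity is immediate since $\bar{\tau}(s,x)=\mathbb{E}[\tau(s,x)]\ge 0$ (also consistent with $v_0\in\mathscr{C}(D)$ in Theorem \ref{thm:FunctionalUniqueSoln}), and non-triviality follows from the non-triviality clause of Theorem \ref{thm:ExistenceUniquenessResult_Coercive} applied to the inhomogeneity $f\equiv 1\ne 0$.

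The main obstacle is the bookkeeping step of translating the SDE-level hypotheses into Condition A2, in particular checking that the local smoothness and boundedness conditions $(\ref{eq:LocallySmoothDrift})$--$(\ref{eq:LocallySmoothSigma})$ genuinely supply the H\"older regularity $a^{ij},b^i\in C^{1+\theta,2+\theta}(\bar{D}_T)$ once restricted to the bounded set $\bar{D}$, and clarifying the boundary regularity assumption on $\partial D$. All of the analytic content is already proved in the earlier sections; this final theorem is essentially a packaging statement.
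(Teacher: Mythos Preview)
Your proposal is correct and follows essentially the same route as the paper: verify that the SDE hypotheses yield Condition~A2 on the bounded domain, apply Theorem~\ref{thm:ConvexThm} with $f\equiv 1\in C^{\gamma}(0,T;H^{2}(D))$ and $d\le 3$ to obtain the unique regular solution $v$ of (\ref{eq:ReversedPDE_Periodic}), and time-reverse to recover $\bar{\tau}$. The paper's proof is terser---it simply defines $\bar{\tau}(s,\cdot):=v(T-s,\cdot)$ rather than first invoking Theorem~\ref{thm:ExpectedDurationPDE_Theorem} and then matching via uniqueness as you do---but the substance is the same, and your extra bookkeeping (checking Condition~A2 from (\ref{eq:LocallySmoothDrift})--(\ref{eq:Sigma_and_inverse_bounded}) and flagging the implicit $\partial D\in C^{2+\theta}$ assumption) is appropriate.
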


\begin{proof}
Specific to expected duration, we let $f(s,x)=I_{D}(x)$, the indicator
function on $D$ for all $s\in[0,T]$. Since $D$ is bounded, obviously
$f\in C^{\gamma}(0,T;H^{2}(D))$ is non-negative. Then by Theorem
\ref{thm:ConvexThm}, there exists a unique non-trivial non-negative
solution $v\in C(\bar{D}_{T})\cap C^{1,2}(D_{T})$ satisfying (\ref{eq:ReversedPDE_Periodic}).
Then by time-reversal, 
\[
\bar{\tau}(s,\cdot):=v(T-s,\cdot)=\Phi(0,T-s)v_{0}+\int_{0}^{T-s}\Phi(r,T-s)I_{D}dr,\quad s\in[0,T],
\]
satisfies (\ref{eq:ExpectedDurationPDE_Periodic}). 
\end{proof}

\section{Applications}

\subsection{Numerical Considerations}

As discussed in the introduction, expected exit times have a range
of applications including modelling the occurrence of certain events.
Depending on the context, the problem is typically phrased as the
stochastic process hitting a barrier or a threshold. While many physical
problems have naturally bounded domains, some applications have unbounded
domains. For example, $D=(0,\infty)$ is a typical unbounded domain
for species population or a wealth process, and exit from $D$ implies
extinction and bankruptcy respectively.

However, unbounded domain brings various technical difficulties for
the expected duration PDE. Particularly from a computational viewpoint,
any numerical PDE scheme requires a bounded domain. In the following
remark, we show that the recurrency condition (\ref{eq:NegativeOutsideB})
below is sufficient to approximate the expectation duration from an
unbounded domain to a bounded domain.
\begin{rem}
\label{rem:UnboundedToBounded}Suppose that there exists a radius
$r_{*}>0$ and $\epsilon>0$ such that the coefficients of SDE (\ref{eq:GeneralSDE_NonAuton})
satisfies 
\begin{equation}
2\langle b(t,x),x\rangle+\lVert\sigma\rVert_{2}^{2}\leq-\epsilon\quad\text{on }\mathbb{R}^{+}\times B_{r_{*}}^{c}.\label{eq:NegativeOutsideB}
\end{equation}
Note that if $b$ is continuous, then there exists a constant $M\geq-\epsilon$
such that $2\langle b(t,x),x\rangle+\lVert\sigma\rVert_{2}\leq M$.
Let $D$ be an unbounded domain that is bounded in at least one direction
hence $r_{D}:=\inf_{y\in\partial D}\lVert y\rVert$ is finite. We
suppose initial condition $x\in B_{r_{I}}\cap D$ for some fixed $r_{I}\geq0$.

For any fixed $R_{*}>\max\{r_{*},r_{D},r_{I}\}$, define $\tilde{D}:=D\cap B_{R_{*}}$.
Note that $\tilde{D}$ is an open bounded domain with boundary $\partial\tilde{D}=\partial\tilde{D}_{1}\cup\partial\tilde{D}_{2}$,
where $\partial\tilde{D}_{1}:=\partial D\cap B_{R_{*}}$ and $\partial\tilde{D}_{2}:=D\cap\partial B_{R_{*}}$
are the subset of original boundary and ``artificial'' boundary
to ``close up'' the original boundaries respectively. Observe that
$\lVert x\rVert\in(r_{D},R_{*})$ for $x\in\partial\tilde{D}_{1}$
and $\lVert x\rVert=R_{*}$ for $x\in\partial\tilde{D}_{2}$.

For $\eta_{\tilde{D}}$ as defined by (\ref{eq:FirstExitTime}) for
the domain $\tilde{D}$, by Itô's formula, we have

\[
\lVert X_{t\wedge\eta_{\tilde{D}}}\rVert^{2}=\lVert x\rVert^{2}+\int_{s}^{t\wedge\eta_{\tilde{D}}}2\langle b(r,X_{r}),X_{r}\rangle+\lVert\sigma\rVert_{2}^{2}dr+\int_{s}^{t\wedge\eta_{\tilde{D}}}\langle X_{r},\sigma dW_{r}\rangle.
\]
Under the assumption (\ref{eq:NegativeOutsideB}), Corollary 3.2 of
\cite{Hasminskii} implies that $\mathbb{E}^{s,x}(\eta_{\tilde{D}}-s)\leq\frac{\lVert x\rVert^{2}}{\epsilon}$.
Since $X_{r\wedge\eta_{\tilde{D}}}$ is bounded for $r\in[s,t]$,
it follows by taking expectations that 
\[
\mathbb{E}^{s,x}\lVert X_{t\wedge\eta_{\tilde{D}}}\rVert^{2}\leq\lVert x\rVert^{2}+M\mathbb{E}^{s,x}[\eta_{\tilde{D}}-s]\leq(1+\frac{1}{\epsilon})\lVert x\rVert^{2}.
\]
By Markov's inequality, it follows that $\mathbb{P}(\lVert X_{\eta_{\tilde{D}}}\rVert^{2}\geq R_{*}^{2})\leq R_{*}^{-2}(1+\frac{1}{\epsilon})\lVert x\rVert^{2}\leq R_{*}^{-2}(1+\frac{1}{\epsilon})r_{I}^{2}\rightarrow0$
as $R_{*}\rightarrow\infty$. This implies that for sufficiently large
$R_{*}$, the process exits $\tilde{D}$ via $\partial\tilde{D}_{1}$
rather than $\partial\tilde{D}_{2}$, thus 
\[
\bar{\tau}_{D}\rvert_{\tilde{D}}(s,\cdot)\simeq\bar{\tau}_{\tilde{D}}(s,\cdot),
\]
where $\bar{\tau}_{D}\rvert_{\tilde{D}}$ denotes $\bar{\tau}_{D}$
restricted to $\tilde{D}$. In practice, $R_{*}=2\max\{r_{*},r_{D},r_{I}\}$
is sufficient for weakly dissipative SDEs. We consider two examples
and assume for simplicity that $r_{I}=r_{*}$.
\end{rem}

It was shown in \cite{FZZ19} that the periodic Ornstein-Uhlenbeck
process possesses a geometric periodic measure \cite{CFHZ2016}, furthermore
it has a periodic mean reversion property akin to its classical counterpart
with (constant) mean reversion. In applications, these properties
are desirable for processes with underlying periodicity or seasonality.
Indeed electricity prices in economics \cite{PeriodicOU1,PeriodicOU2}
and daily temperature \cite{PeriodicOU3} were modelled by periodic
Ornstein-Uhlenbeck processes. In \cite{PeriodicOU_Neuroscience},
the authors performed statistical inference of biological neurons
modelled by Ornstein-Uhlenbeck processes with periodic forcing. In
such models, one may be interested in the expected time in which a
threshold is reached. For model parameter estimation, we refer readers
to \cite{PeriodicOU_drift_estimation}. 
\begin{example}
\label{exa:WeaklyDissip_OU} Consider the periodically forced multi-dimensional
Ornstein-Uhlenbeck process 
\[
dX_{t}=\left(S(t)-AX_{t}\right)dt+\sigma dW_{t},
\]
where $S\in C(\mathbb{R}^{+},\mathbb{R}^{d})$ is $T$-periodic and
$\sigma,A\in\mathbb{R}^{d\times d}$ with $A$ positive definite i.e.
there exists a constant $\alpha>0$ such that $\langle Ax,x\rangle\geq\alpha\lVert x\rVert^{2}$
for all $x\in\mathbb{R}^{d}$. Denote $\lVert S\lVert_{\infty}=\sup_{t\in[0,T]}\lVert S(t)\lVert$.
By Cauchy-Schwarz inequality and Young's inequality, it follows that
\begin{align*}
2\langle S(t)-Ax,x\rangle & \leq2\lVert S\lVert_{\infty}\lVert x\lVert-2\alpha\lVert x\rVert^{2}\leq\frac{\lVert S\lVert_{\infty}^{2}}{\alpha}-\alpha\lVert x\rVert^{2},
\end{align*}
i.e. weakly dissipative with coefficients $c=\frac{\lVert S\lVert_{\infty}^{2}}{\alpha}$
and $\lambda=\alpha$. Then 
\[
r_{*}^{2}=\frac{1}{\alpha}\left(\frac{\lVert S\lVert_{\infty}^{2}}{\alpha}+\lVert\sigma\rVert_{2}\right).
\]
Remark \ref{rem:UnboundedToBounded} suggests one can reasonably approximate
the unbounded domain $D=(0,\infty)^{d}$ by the bounded domain $\tilde{D}=(0,2r_{*})^{d}$.
We remark also, to our current knowledge, there is no closed-form
formula to solve (\ref{eq:ExpectedDurationPDE_Periodic}) even in
this simple example. As such, we provide the numerical solution to
(\ref{eq:ExpectedDurationPDE_Periodic}) in Section \ref{sec:NumericalSolutionOfPde}.
\end{example}

\begin{example}
\label{exa:DuffOsc_is_WeaklyDissip}Consider the stochastic overdamped
Duffing oscillator 
\begin{equation}
dX_{t}=\left(X_{t}-X_{t}^{3}+A\cos(\omega t)\right)dt+\sigma dW_{t},\label{eq:DuffingOsc}
\end{equation}
where $A,\omega\in\mathbb{R}$ and $\sigma\in\mathbb{R\backslash}\{0\}$
are (typically small) parameters. By elementary calculus, it is straightforward
to show (\ref{eq:DuffingOsc}) satisfies the weakly dissipative condition
for any fixed $\lambda\in(0,2)$ and $c=\frac{1}{2-\lambda}+2\lvert A\rvert+\frac{\lambda}{4}$
for small $A$. Thus 
\[
r_{*}^{2}=\frac{1}{\lambda}\left(\frac{1}{2-\lambda}+2\lvert A\rvert+\lVert\sigma\rVert_{2}\right)+\frac{1}{4}.
\]
For concreteness, suppose that $A=0.12$, $\omega=10^{-3}$, $\sigma=2.85$
and $\lambda=1$, then $r_{*}=\sqrt{1.57}=1.25$ (2 dp). Remark \ref{rem:UnboundedToBounded}
suggests that the process exiting $D=(-1,\infty)$ can be approximated
by the bounded domain $\tilde{D}=(-1,2r_{*})$.

Via Monte Carlo simulations, we numerically demonstrate Remark \ref{rem:UnboundedToBounded}
for (\ref{eq:DuffingOsc}) to estimate $\bar{\tau}_{\tilde{D}}(0,x)$
for different bounded domains $\tilde{D}$. We partition $\tilde{D}$
into $N_{x}^{\text{sde}}=100$ uniform initial conditions. For each
fixed initial condition $x\in\tilde{D}$, we employ Euler\textendash Maruyama
method with time intervals of $\Delta t=5\cdot10^{-3}$ to generate
1000 sample-paths of $X_{t}$ until it exits $\tilde{D}$. We record
and average the sample-path exit time to yield an estimate for $\bar{\tau}_{\tilde{D}}(0,x)$.
Figure \ref{DifferentDomainGraph} shows that the estimation of $\bar{\tau}_{\tilde{D}}$
are ``stable'' for bounded domain $\tilde{D}=(-1,2)$ and larger.
Where the differences between these curves can be explained by the
randomness of Monte Carlo simulations and sample size. On the other
hand, the estimation of $\bar{\tau}_{\tilde{D}}$ differs significantly
for $\tilde{D}=(-1,1.5)$. Physically, this is interpreted as the
artificial boundary $R_{*}=1.5$ set too low and many sample-paths
leave via this artificial boundary rather than via $-1$.

\begin{figure}[h]
\begin{centering}
\includegraphics[scale=0.55]{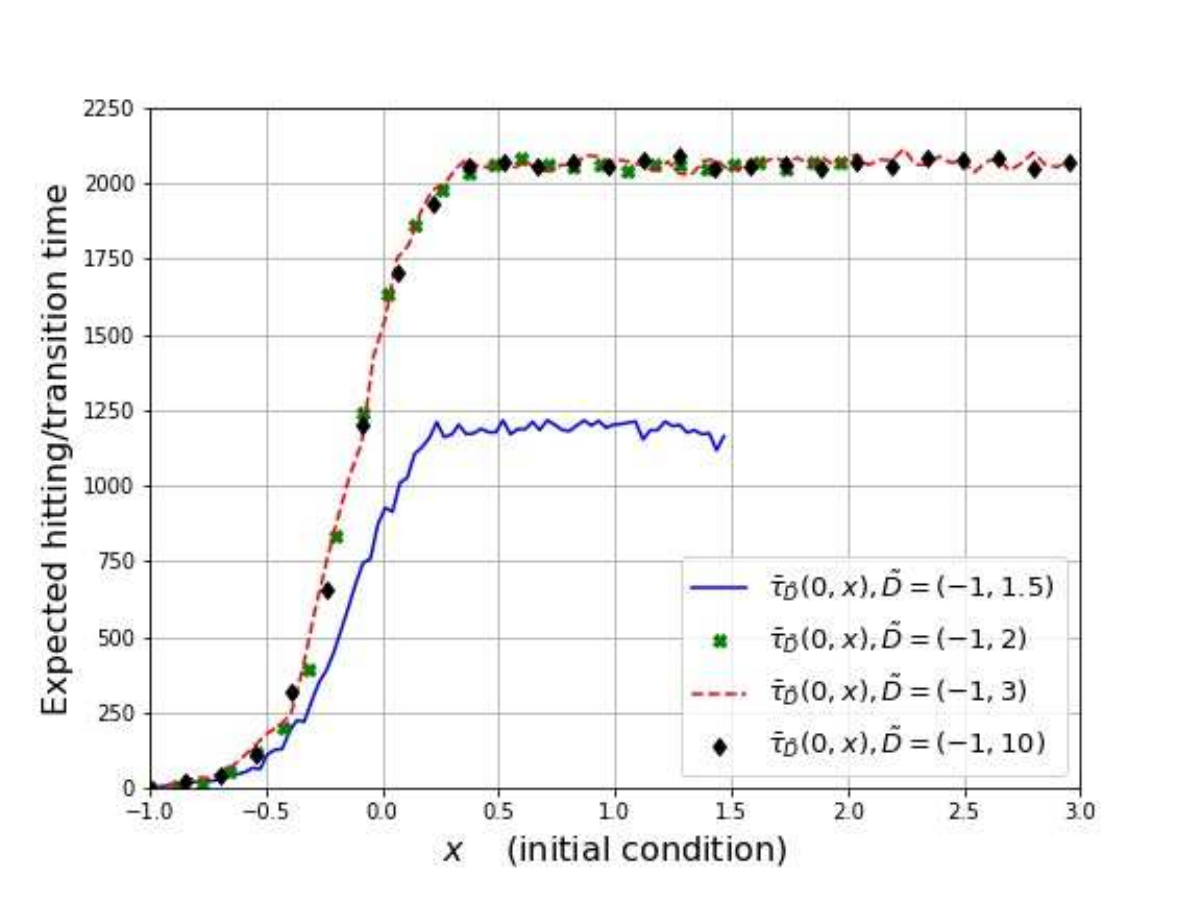} 
\par\end{centering}
\caption{Monte Carlo estimation of $\bar{\tau}_{\tilde{D}}(0,x)$ with different
$\tilde{D}$, plotted for $x\in(-1,3)\cap\tilde{D}$.}
\label{DifferentDomainGraph} 
\end{figure}

Finally, for subsequent analysis, while $\tilde{D}=(-1,2)$ is sufficient,
we will reduce $D$ to $\tilde{D}=(-1,3)$. We pick this larger domain
to accommodate when we use $\sigma=1$ where $r_{*}=1.58$ (2 dp).
\end{example}

\begin{rem}
\label{rem:shapeOfTau}We remark that in Figure \ref{DifferentDomainGraph}
for stochastic resonance problem such as (\ref{eq:DuffingOsc}), $\bar{\tau}(s,x)$
visually resembles a step-function is intuitively expected. We indeed
expect $\bar{\tau}(s,x)$ small when the process starts near the left
well because the process does not need to cross the potential well
to exit. On the other hand, $\bar{\tau}(s,x)$ is large when the process
starts in the right well. By the well-posedness of the PDE, $\bar{\tau}(s,x)$
is continuous and hence we expect a sharp increase between the two
wells. Therefore, for the stochastic resonance problem, we expect
$\bar{\tau}(s,x)$ to appear like a step-function.
\end{rem}

\subsection{Numerical Solution of Expected Duration PDE\label{sec:NumericalSolutionOfPde}}

In this section, we describe how one may numerically solve the PDE
(\ref{eq:ReversedPDE_Periodic}) using Theorem \ref{thm:ExistenceUniquenessResult_Coercive}
and Theorem \ref{thm:ConvexThm}. For simplicity of this exposition,
we do this in the one-dimensional case only. We remark that the presented
numerical procedure does not claim originality nor optimal efficiency
nor optimal accuracy as it is not the purpose of the current paper.
We state and prove the following lemma that would be useful in applying
Theorem \ref{thm:ConvexThm} numerically. It provides an analytical
step size in numerically computing the next iterate, given the initial
point and iteration direction.
\begin{lem}
\label{lem:GradientStepSize}The function $\Psi(\gamma):=F(v+\gamma\varphi)$
for $v,\phi\in L^{2}(D)$ is minimised when

\begin{equation}
\gamma=-\frac{\langle\mathcal{A}v-v,\Phi(0,T)-I)\phi\rangle_{L^{2}(D)}}{\lVert(\Phi(0,T)-I)\phi\rVert_{L^{2}(D)}^{2}}.\label{eq:gradientStepSize}
\end{equation}
\end{lem}

\begin{proof}
Using the identities (\ref{eq:DirectionalDerivOfF}) and $\mathcal{A}(v+\gamma\phi)=\mathcal{A}v+\gamma\Phi(0,T)\phi$
as well as the linearity property of $\Phi(0,T)$, we have that
\begin{align*}
\dot{\Psi}(\gamma) & =DF(v+\gamma\phi)(\phi)\\
 & =\int_{D}(\mathcal{A}(v+\gamma\phi)-(v+\gamma\phi))(\Phi(0,T)-I)\phi dx\\
 & =\int_{D}\left(\mathcal{A}v-v\right)(\Phi(0,T)-I)\phi)dx+\gamma\int_{D}(\Phi(0,T)-I)\phi)^{2}dx.
\end{align*}
It follows that $\dot{\Psi}(\gamma)=0$ when $\gamma$ is given by
(\ref{eq:gradientStepSize}). Note in fact, $F(v)=0$ is already at
a minimum when $\mathcal{A}v=v$ and hence $\gamma=0$ using (\ref{eq:gradientStepSize}).
\end{proof}
We now describe the numerical PDE scheme to solve (\ref{eq:ReversedPDE_Periodic}).
For given spatial domain $D=(d_{\min},d_{\max})$, we partition it
into $N_{x}^{\text{pde}}=500$ uniform interior nodes and the time
interval $[0,T]$ into $N_{t}^{\text{pde}}=500$ uniform points. To
evolve the parabolic IBVPs (\ref{eq:TimeReversedHomo_IBVP}) and (\ref{eq:PDE_for_w})
in time, we implement a backward Euler scheme with central differencing.
We note that in problems such as stochastic resonance where the period
$T$ is large, backward Euler method is preferred over Crank-Nicholson
method to enable larger time steps and less prone to spurious oscillations.

To apply Banach Fixed Point numerically of Theorem \ref{thm:ExistenceUniquenessResult_Coercive},
we seek an finite representation of $\mathcal{A}$ given by (\ref{eq:OperatorA}).
We first discretise the continuous solution $v(t,x)$ to the IBVP
(\ref{eq:Inhomo_Reversed_IBVP}) with 
\[
v_{i}^{n}:=v(t^{n},x_{i}),\quad0\leq n\leq N_{t}^{\text{pde}},0\leq i\leq N_{x}^{\text{pde}},
\]
where $x_{i}=d_{\min}+i\Delta x,\quad i=0,..,N_{x}^{\text{pde}}+1$,
$\Delta x=\frac{d_{\max}-d_{\min}}{N_{x}^{\text{pde}}+1}$, $t^{n}=n\Delta t,\quad n=0,..,N_{t}^{\text{pde}}$
and $\Delta t=\frac{T}{N_{t}^{\text{pde}}}$. Using backward time-differencing
and central spatial differencing, it is straightforward to attain
the following parabolic PDE discretisation for (\ref{eq:TimeReversedHomo_IBVP})

\begin{align*}
\left(1+a_{i}^{n}\frac{\Delta t}{(\Delta x)^{2}}\right)u_{i}^{n+1} & =u_{i}^{n}+\left(\frac{a_{i}^{n}}{2}\frac{\Delta t}{(\Delta x)^{2}}+\frac{b_{i}^{n}}{2}\frac{\Delta t}{\Delta x}\right)u_{i+1}^{n+1}+\left(\frac{a_{i}^{n}}{2}\frac{\Delta t}{(\Delta x)^{2}}-\frac{b_{i}^{n}}{2}\frac{\Delta t}{\Delta x}\right)u_{i-1}^{n+1},
\end{align*}
for $1\leq i\leq N_{x}^{\text{pde}},0\leq n\leq N_{t}^{\text{pde}}$
and $a_{i}^{n}=a(T-t^{n},x_{i})$ and $b_{i}^{n}=b(T-t^{n},x_{i})$
are the time-reversed coefficients evaluated at the grid points. We
then define the tridiagonal matrix $\hat{\Phi}^{-1}(n)\in\mathbb{R}^{N_{x}^{\text{pde}}\times N_{x}^{\text{pde}}}$
for $1\leq n\le N_{t}^{\text{pde}}$ by 
\[
\hat{\Phi}^{-1}(n):=\begin{cases}
i\text{'th diagonal: \ensuremath{\quad1+a_{i}^{n}\frac{\Delta t}{(\Delta x)^{2}},}}\\
i\text{'th superdiagonal: }\ensuremath{\quad-\frac{a_{i+1}^{n}}{2}\frac{\Delta t}{(\Delta x)^{2}}-\frac{b_{i+1}^{n}}{2}\frac{\Delta t}{\Delta x}}\ensuremath{,}\\
i\text{'th subdiagonal: \ensuremath{\quad}}\frac{b_{i-1}^{n}}{2}\frac{\Delta t}{\Delta x}-\frac{a_{i-1}^{n}}{2}\frac{\Delta t}{(\Delta x)^{2}}.
\end{cases}
\]

Hence, for a given $f\in C^{\gamma}(0,T;L^{p}(D))$, define $\hat{f}^{n}=(f(T-t^{n},x_{1}),...,f(T-t^{n},x_{N_{x}^{\text{pde}}}))^{T}$,
together with homogeneous Dirichlet boundary conditions, we have the
following linear system for (\ref{eq:Inhomo_Reversed_IBVP})
\begin{equation}
\hat{\Phi}^{-1}(n)\hat{v}^{n+1}=\hat{v}^{n}+\Delta t\cdot\hat{f}^{n},\quad n=1,...,N_{t}^{\text{pde}}.\label{eq:tridiagonalEqn}
\end{equation}

Equation (\ref{eq:tridiagonalEqn}) implies that $\hat{v}^{n+1}=\hat{\Phi}(n)(v+\Delta t\cdot\hat{f}^{n})$
and so $\hat{\Phi}(n)$ is the finite dimensional analogue of $\Phi(t^{n},t^{n+1})$
given by (\ref{eq:EvolutionOperator}). Since $\hat{\Phi}^{-1}(n)$
is tridiagonal, equation (\ref{eq:tridiagonalEqn}) can be efficiently
solved by utilising lower decomposition or iterative methods which
are generally more efficient than explicitly inverting the matrix.
By iteratively solving, one evolves the IBVP (\ref{eq:TimeReversedHomo_IBVP})
from initial time $t=0$ to time $t=T$ for some given initial conditions.
Define $\hat{\mathcal{A}}^{-1}:=\hat{\Phi}^{-1}(1)\hat{\Phi}^{-1}(2)\cdot\cdot\cdot\hat{\Phi}^{-1}(N_{t}^{\text{pde}})$,
then it follows that

\begin{equation}
\hat{v}^{N_{t}^{\text{pde}}}=\hat{\mathcal{A}}\hat{v}^{0},\quad\hat{v}^{0}\in\mathbb{R}^{N_{x}^{\text{pde}}},\label{eq:EvolveParabolic}
\end{equation}
numerically solves (\ref{eq:Inhomo_Reversed_IBVP}) from initial time
$t=0$ to time $t=T$. Note further that $\hat{\mathcal{A}}$ is the
finite counterpart of (\ref{eq:OperatorA}), the period solution map
of (\ref{eq:Inhomo_Reversed_IBVP}). Given $\hat{A}\hat{v}$, we define
the finite dimensional cost functional 
\[
\hat{F}(\hat{v}):=\frac{\sqrt{\Delta x}}{2}\lVert\hat{\mathcal{A}}\hat{v}-\hat{v}\rVert_{\mathbb{R}^{N_{x}^{\text{pde}}}}^{2},
\]
where $\lVert\cdot\rVert_{\mathbb{R}^{N_{x}^{\text{pde}}}}$ is the
standard $\mathbb{R}^{N_{x}^{\text{pde}}}$ Euclidean norm. Thus,
to apply Banach Fixed Point iteration of Theorem \ref{thm:ExistenceUniquenessResult_Coercive},
one can arbitrary choose initial vector $\hat{v}_{0}\in\mathbb{R}^{N_{x}^{\text{pde}}}$
and iterate $\hat{v}_{n+1}=\hat{\mathcal{A}}\hat{v}_{n}$ until $\hat{F}(\hat{v}_{n})\leq10^{-6}$.
While $\hat{v}_{0}$ can be chosen arbitrarily, since $\bar{\tau}(s,x)\geq0$,
see (\ref{eq:v0_is_nonnegative}), it is recommended to choose $\hat{v}_{0}\geq0$
also. Particular to stochastic resonance problems, by Remark \ref{rem:shapeOfTau},
it is recommended to pick $\hat{v}_{0}$ by a step-like function such
as a scaled sigmoid function $\frac{1}{\sigma^{2}(1+e^{-x})}$. The
scaling is due to our anticipation that the expected duration time
is inversely proportional to the diffusion coefficient.

Given that the $F$ is strictly convex and we have the gradient (\ref{eq:Gradient_Functional_Expression}),
we can numerically solve Theorem \ref{thm:ConvexThm} by a gradient
descent method scheme. For simplicity we assume $a=\text{const}$
and again apply backward time differencing and central spatial differencing,
we have the discretisation for (\ref{eq:PDE_for_w})

\begin{align*}
 & \left(1+a(t^{n},x_{i})\frac{\Delta t}{(\Delta x)^{2}}+\partial_{x}b(t^{n},x_{i})\Delta t\right)u_{i}^{n+1}\\
 & =u_{i}^{n}+\left(\frac{a(t^{n},x_{i})}{2}\frac{\Delta t}{(\Delta x)^{2}}-\frac{b(t^{n},x_{i})}{2}\frac{\Delta t}{\Delta x}\right)u_{i+1}^{n+1}+\left(\frac{a(t^{n},x_{i})}{2}\frac{\Delta t}{(\Delta x)^{2}}+\frac{b(t^{n},x_{i})}{2}\frac{\Delta t}{\Delta x}\right)u_{i-1}^{n+1}.
\end{align*}
Note here, the coefficients are not time-reversed. We similarly define
the following tridiagonal matrices $W^{-1}(n)\in\mathbb{R}^{N_{x}^{\text{pde}}\times N_{x}^{\text{pde}}}$for
$n=0,...,N_{t}^{\text{pde}}$ by
\[
\hat{W}^{-1}(n):\begin{cases}
i\text{'th diagonal: \ensuremath{\quad1+a(t^{n},x_{i})\frac{\Delta t}{(\Delta x)^{2}}+\partial_{x}b(t^{n},x_{i})\Delta t,}}\\
i\text{'th superdiagonal:}\ensuremath{\quad-\frac{a(t^{n},x_{i+1})}{2}\frac{\Delta t}{(\Delta x)^{2}}}\ensuremath{,}\\
i\text{'th subdiagonal:\ensuremath{\quad}}-b(t^{n},x_{i-1})\frac{\Delta t}{\Delta x}-\frac{a(t^{n},x_{i-1})}{2}\frac{\Delta t}{(\Delta x)^{2}}.
\end{cases}
\]
We analogously define $\hat{W}^{-1}(n,m):=\hat{W}^{-1}(n)\hat{\Phi}_{W}^{-1}(n+1)\cdot\cdot\cdot\hat{W}^{-1}(m)$,
where $0\leq n<m<N_{t}^{\text{pde}}$ hence $\hat{W}(1,N_{t}^{\text{pde}})$
is the finite counterpart to $W(0,T)$ defined in (\ref{eq:OperatorW}).
As with (\ref{eq:EvolveParabolic}), we evolve (\ref{eq:PDE_for_w})
by solving the triangular system of equations. Unlike numerically
solving Theorem \ref{thm:ExistenceUniquenessResult_Coercive}, the
initial conditions in (\ref{eq:PDE_for_w}) is not chosen arbitrarily.
Instead given $\hat{v}_{n}$ , the vector
\[
\hat{w}_{n}:=\mathcal{\hat{A}}\hat{v}_{n}-\hat{v}_{n}\in\mathbb{R}^{N_{x}^{\text{pde}}}
\]
serves as the $n$'th initial condition to (\ref{eq:PDE_for_w}).
Therefore, by (\ref{eq:Gradient_Functional_Expression}), the finite
dimensional gradient is given by
\[
\hat{\frac{\delta F}{\delta v_{n}}}=\hat{W}(1,N_{t}^{\text{pde}})\hat{w}_{n}-\hat{w}_{n}\in\mathbb{R}^{N_{x}^{\text{pde}}}.
\]
Then one can compute the gradient descent iterates by
\begin{equation}
\hat{v}_{n+1}=\hat{v}_{n}-\gamma_{n}\hat{\frac{\delta F}{\delta v_{n}}},\label{eq:gradientIterates}
\end{equation}
that is we pick the next iterate in the direction $\phi=-\hat{\frac{\delta F}{\delta v_{n}}}$
with some step size $\gamma_{n}$. By Lemma \ref{lem:GradientStepSize},
we can pick $\gamma_{n}$ optimally with the formula
\[
\gamma_{n}=\frac{\langle\hat{w}_{n},(\hat{\Phi}(1,N_{t}^{\text{pde}})-I)\hat{\frac{\delta F}{\delta v_{n}}}\rangle_{\mathbb{R}^{N_{x}^{\text{pde}}}}}{\lVert(\hat{\Phi}(1,N_{t}^{\text{pde}})-I)\hat{\frac{\delta F}{\delta v_{n}}}\rVert_{\mathbb{R}^{N_{x}^{\text{pde}}}}^{2}},
\]
as the finite dimensional counterpart of (\ref{eq:gradientStepSize})
replacing $L^{2}(D)$ inner-product with $\mathbb{R}^{N_{x}^{\text{pde}}}$
Euclidean inner-product. To compute $\gamma_{n}$, one just need to
compute $(\hat{\Phi}(1,N_{t}^{\text{pde}})\hat{\frac{\delta F}{\delta v_{n}}}$
by again solving (\ref{eq:tridiagonalEqn}) where $\hat{f}^{n}\equiv0$
and computing the inner-product. Thus to perform the gradient descent
procedure, we again choose an arbitrary initial $\hat{v}_{0}\in\mathbb{R}^{N_{x}^{\text{pde}}}$
and iterate (\ref{eq:gradientIterates}) until $\hat{F}(v_{n})\leq10^{-6}$.

Using the procedure described above, we end this section numerically
solve (\ref{eq:ReversedPDE_Periodic}) for the expected duration for
the Duffing Oscillator (\ref{eq:DuffingOsc}). Following Example \ref{exa:DuffOsc_is_WeaklyDissip},
we choose the same parameters $A=0.12$, $\omega=10^{-3}$ and $\sigma=0.285$.
The same parameters was considered in \cite{CherubiniStochRes}. As
Example \ref{exa:DuffOsc_is_WeaklyDissip} and Figure \ref{DifferentDomainGraph}
demonstrated, we reduce the unbounded domain to the bounded domain
$\tilde{D}=(-1,3)$. We then estimate the $s=0$ cross-section $\bar{\tau}_{0.285}(0,\cdot)$
by three approaches. In this demonstration, we let $\bar{\tau}_{0.285}^{\text{sde}}$,
$\bar{\tau}_{0.285}^{\text{bfp}}$ and $\bar{\tau}_{0.285}^{\text{grad}}$
to respectively represent the Monte Carlo simulation from Example
\ref{exa:DuffOsc_is_WeaklyDissip}, Banach fixed point iteration (Theorem
\ref{thm:ExistenceUniquenessResult_Coercive}) and gradient descent
iteration via convex optimisation (Theorem \ref{thm:ConvexThm}).
Figure \ref{SDE_vs_PDE_plot} shows these approximations.

\begin{figure}[h]
\centering{}\includegraphics[scale=0.75]{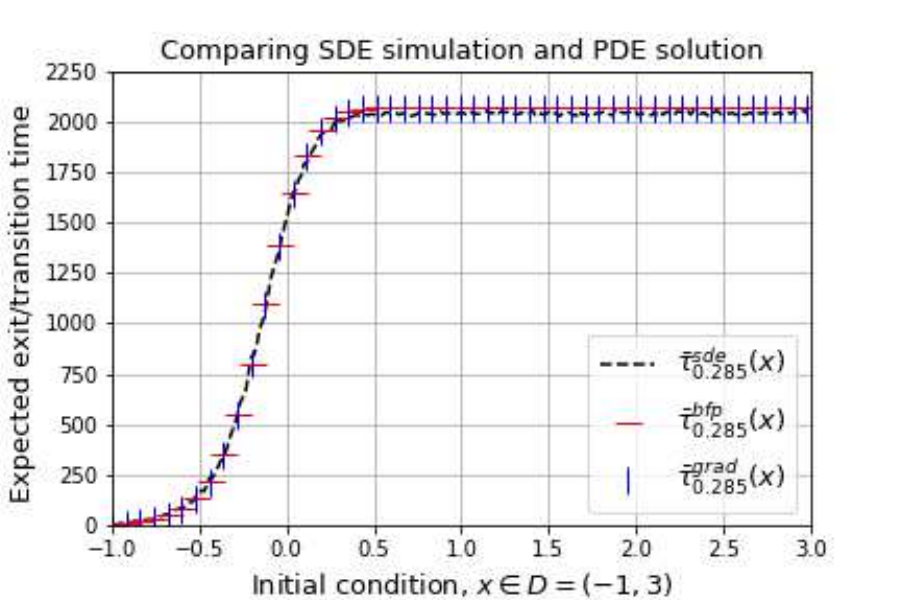}\caption{Numerical approximation of the expected transition time $\bar{\tau}_{0.285}(x)$
by $\bar{\tau}_{0.285}^{\text{sde}}(x)$, $\bar{\tau}_{0.285}^{\text{bfp}}(x)$
and $\bar{\tau}_{0.285}^{\text{grad}}(x)$ to SDE (\ref{eq:DuffingOsc})
with parameters $A=0.12,$ $\omega=0.001$, $\sigma=0.285$, $s=0$,
$D=(-1,3)$.}
\label{SDE_vs_PDE_plot}
\end{figure}
For stochastic simulation of Figure \ref{SDE_vs_PDE_plot}, $\bar{\tau}_{0.285}^{\text{sde}}$
from Example \ref{exa:DuffOsc_is_WeaklyDissip} for the domain $D=(-1,3)$
was reused. Figure \ref{SDE_vs_PDE_plot} shows that $\bar{\tau}_{0.285}^{\text{bfp}}$
and $\bar{\tau}_{0.285}^{\text{grad}}$ closely approximate each other
very well and in turn both visually approximate $\bar{\tau}_{0.285}^{\text{sde}}$
well, particularly for initial conditions starting in the right well.
In the absence of an analytic formulae of $\bar{\tau}_{0.285}$, we
take $\bar{\tau}_{0.285}^{\text{sde}}$ to be the ``true'' solution.
Hence we estimated the relative errors $\frac{\lVert\bar{\tau}_{0.285}^{\text{sde}}-\bar{\tau}_{0.285}^{\text{bfp}}\rVert_{L^{2}(\tilde{D})}}{\lVert\bar{\tau}_{0.285}^{\text{sde}}\rVert_{L^{2}(\tilde{D})}}=1.65\%$
(2 dp) and $\frac{\lVert\bar{\tau}_{0.285}^{\text{sde}}-\bar{\tau}_{0.285}^{\text{grad}}\rVert_{L^{2}(\tilde{D})}}{\lVert\bar{\tau}_{0.285}^{\text{sde}}\rVert_{L^{2}(\tilde{D})}}=1.74\%$
(2 dp). The small relative error validates approximating expected
transition time $\bar{\tau}_{0.285}$ by solving PDE (\ref{eq:ReversedPDE_Periodic})
numerically by either $\bar{\tau}_{0.285}^{\text{bfp}}$ or $\bar{\tau}_{0.285}^{\text{grad}}$
for the Duffing Oscillator. It is anticipated that the (relative)
errors can be explained by the particular numerical scheme and parameters
used in the PDE and SDE methods. It may be particularly remarkable
that the Banach fixed point iterates converges because it is not immediate
whether the associated bilinear form is coercive.

Figure \ref{SDE_vs_PDE_plot} is a cross-section of $\bar{\tau}_{\text{0.285}}(s,x)$
with $s=0$. With the knowledge that the PDE solution approximates
the SDE solution well, we produce Figure \ref{DuffingSurfacePlot},
a surface plot of $\bar{\tau}(s,x)$ to see how changing both initial
time and initial state changes the expected duration time. As expected,
the surface is indeed periodic in $s$.

\begin{figure}[h]
\centering{}\includegraphics[scale=0.55]{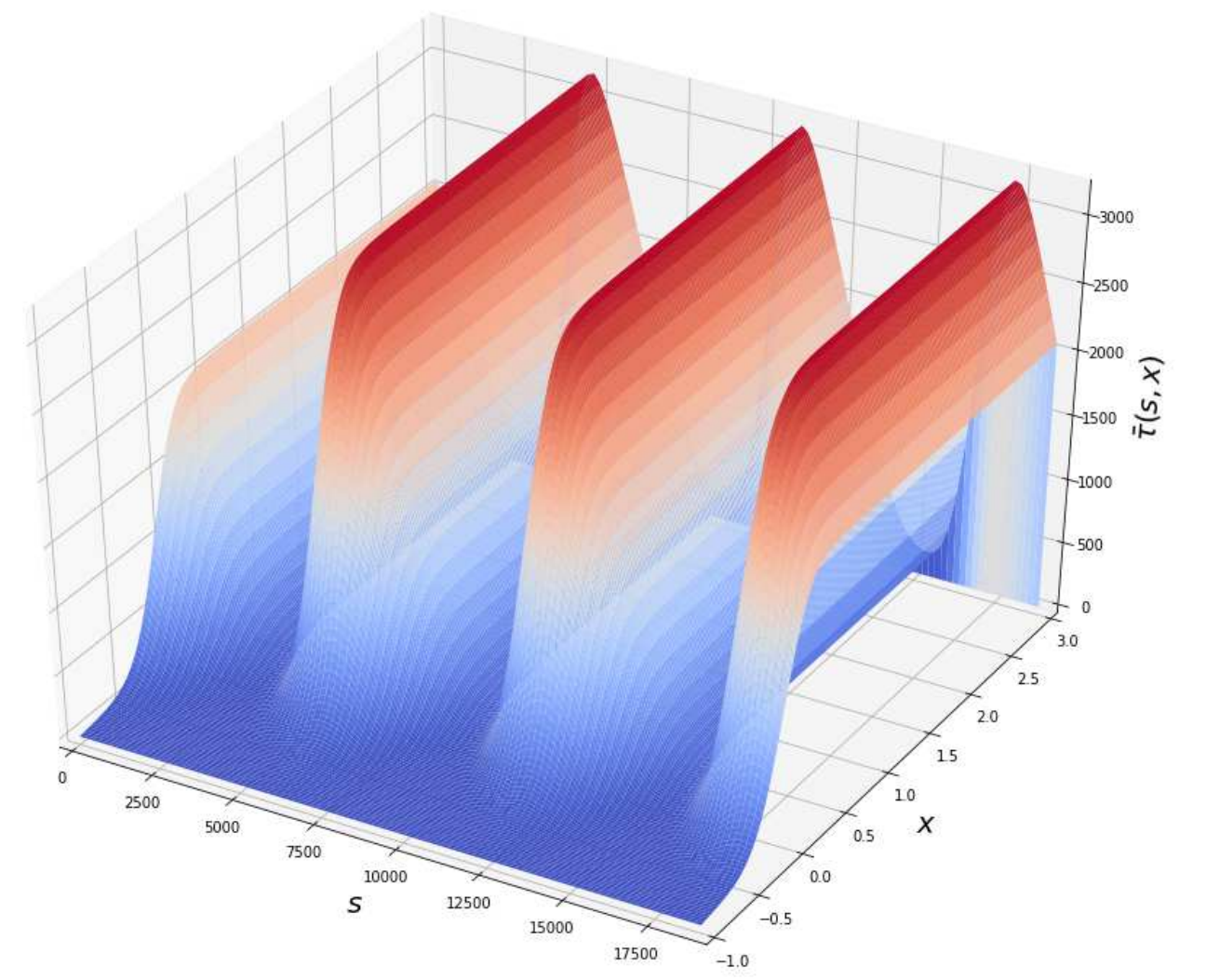}\caption{Numerical approximation of the expected duration $\bar{\tau}(s,x)$
of SDE (\ref{eq:DuffingOsc}) with parameters $A=0.12,$ $\omega=0.001$,
$\sigma=0.285$, $s=0$, $D=(-1,3)$ and $s\in[0,3T],$where $T=\frac{2\pi}{\omega}$
is the period.}
\label{DuffingSurfacePlot}
\end{figure}

Similarly, Figure \ref{SurfacePlot} shows the surface plot for Example
\ref{exa:WeaklyDissip_OU}.

\begin{figure}[h]
\centering{}\includegraphics[scale=0.55]{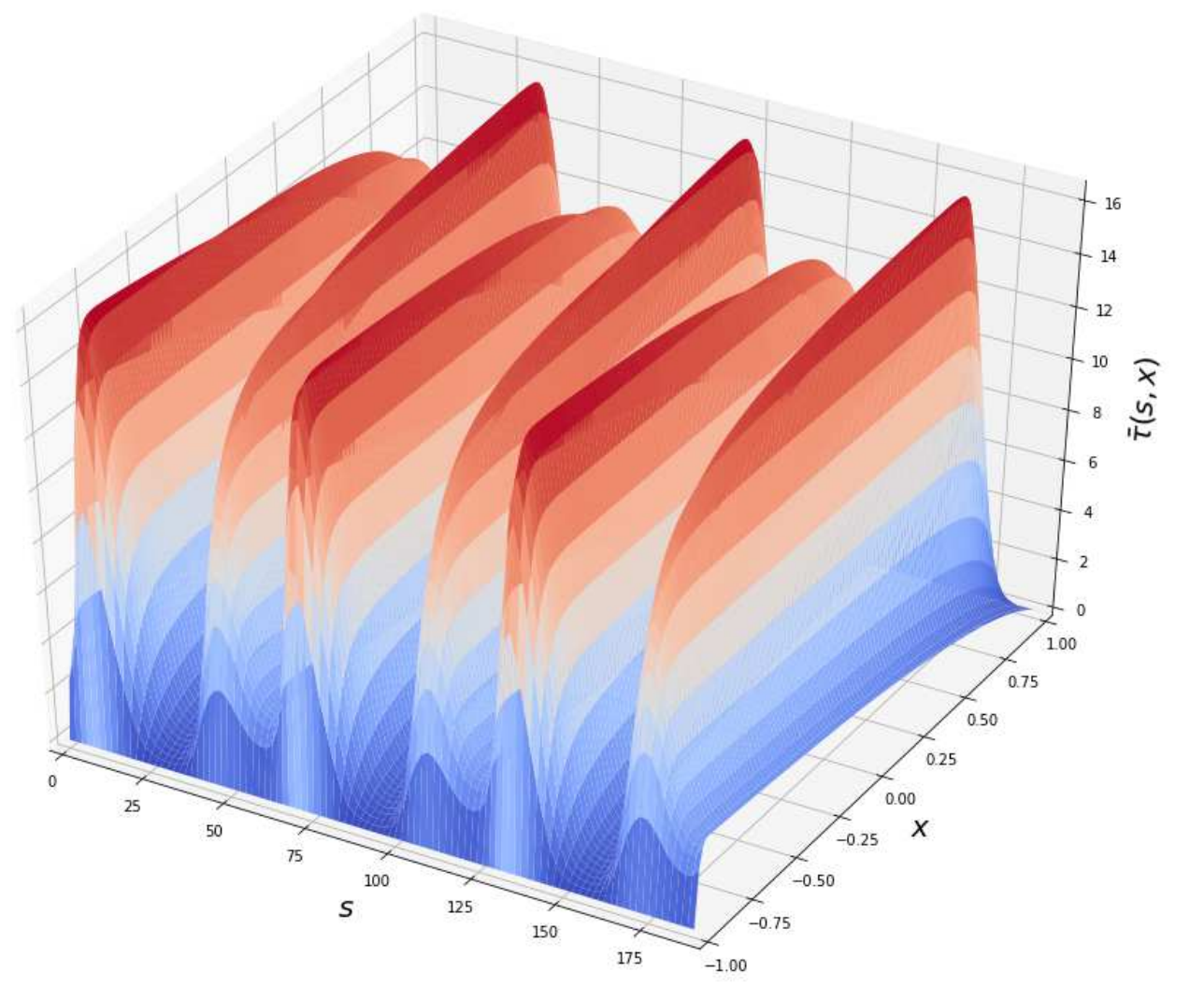}\caption{Numerical approximation of the expected duration $\bar{\tau}(s,x)$
of SDE $dX_{t}=\left(\cos(0.1t)-X_{t}\right)dt+0.285\cdot dW_{t}$
with the domain $D=(-1,1)$ and $s\in[0,3T],$where $T=\frac{2\pi}{\omega}$
is the period.}
\label{SurfacePlot}
\end{figure}

\subsection{Stochastic Resonance\label{subsec:Stochastic-Resonance}}

We now apply the results of this paper to study the physical phenomena
of stochastic resonance. In the introduction, we discussed the modelling
of stochastic resonance by a periodically-forced double-well potential
SDEs and the interest in the expected duration time between the two
wells. We refer to this time as expected transition time to align
with the physical interpretation of the problem. In \cite{FZZ19},
it was shown that time-periodic weakly dissipative SDEs, which includes
double-well potential SDEs, possesses a unique geometric periodic
measure. The existence and uniqueness of geometric periodic measure
of (\ref{eq:DuffingOsc}) implies that transitions between the metastable
states do occur as well as asymptotic periodic behaviour \cite{FZZ19}.
Note however this does not imply that the transition time between
the wells is periodic.

We consider specifically the stochastic overdamped Duffing Oscillator
(\ref{eq:DuffingOsc}) as our model of stochastic resonance, this
is a typical model in literature \cite{BenziStochRes,BenziStochRes81,BenziStochRes83,CherubiniStochRes,GHJM98,HerrmannImkeller,HerrmannImkellerPavly}.
It is easy to see that (\ref{eq:DuffingOsc}) is a gradient SDE 
\[
dX_{t}=-\partial_{x}V(t,X_{t})dt+\sigma dW_{t},
\]
derived from the time-periodic double-well potential $V\in C^{1,2}(\mathbb{R\times R})$
given by 
\[
V(t,x)=-\frac{1}{2}x^{2}+\frac{1}{4}x^{4}-Ax\cos(\omega t).
\]
In the absence of the periodic forcing ($A=0$), $V$ has two local
minimas at $x=\pm1$ which are the metastable states and has a local
maxima at $x=0$, the unstable state. We consider the left and right
well to be the intervals $(-\infty,0)$ and $(0,\infty)$ respectively.
Although the local minimas change over time, by the nature of the
problem, we shall normalise the problem to have $x=-1,+1$ as the
bottom of the left and right well respectively. It is well known (see
e.g. \cite{Gardiner_StochMethods,JungPerioidcallyDrivenSys,RiskenFP,Zwanzig})
that in these double-well systems, the process quickly goes to the
bottom of the well (if not already) and stays there for a long period
of time before transitioning to the other well. This remains true
even for $A\neq0$, see \cite{BenziStochRes,BenziStochRes81,BenziStochRes83,NicolisPeriodicForcing,JungPerioidcallyDrivenSys,JungHanggi,ZhouMossJung}.

Currently, there appears to be neither standard nor rigorous definition
of stochastic resonance \cite{HerrmannImkeller,JungHanggi}. In the
context of this paper, a working definition is that the stochastic
system is in stochastic resonance if the noise intensity is tuned
optimally such that the expected transition time between the metastable
states is (approximately) half the period \cite{CherubiniStochRes}.
We approach the stochastic resonance problem by solving the PDE for
many fixed noise intensity values.

For the stochastic resonance problem, we take the starting time $s=0$
(modulo the period), the time in which the right well is at its lowest
point. In other words, we consider the process transition from the
bottom of the right well to the left well. Formally, for $D=(-1,\infty)$
, consider 
\[
\tau_{\sigma}(x)=\inf_{t\geq0}\{X_{t}=-1|X_{0}=x\},\quad x\in D,
\]
associated to the SDE (\ref{eq:DuffingOsc}). We choose the same parameters
$A=0.12$, $\omega=10^{-3}$ and keep the explicit $\sigma$ subscript
as we shall fine-tune $\sigma$ to attain stochastic resonance. The
same parameters was considered in \cite{CherubiniStochRes}.

We first study the sharpness of transition of the particle crossing
the maxima point to reach the potential's minimum. We numerically
solve the PDE (\ref{eq:TimeReversedHomo_IBVP}) for different domains
$D_{b}=(b,3)$ for left boundary point $b=-1.0,-0.9,\cdot\cdot\cdot,0.4,0.5$.
This is shown in Figure \ref{Sharp transition}. The left graph shows
the expected transition times with respect to the different domains
specified for all initial position within that domain. Observe that
as $b$ goes towards $-1$ from $0.5$, the transition times accumulates
quickly. The right graph fixes the initial position at the larger
minima $x=1$ of the double well potential and the curve is the expected
transition time as a function of the left boundary point $b$ of the
domain $D_{b}$. It is the cross section of the left graph at $x=1$.
We see sharp increase of the expected transition time when $b$ approaches
$0$, the maxima of the double well potential, from the right. This
says that the particle would stay on the right hand side of well for
long time. Note however that when $b$ crosses $0$ and towards $-1$,
the expected transition time remains almost unchanged as the domain
expands to $(-1,3)$. This means that once the particle crosses the
maxima, it quickly reaches and settles around the local minima $x=-1$.
This is a reflection of the accumulating curves of the left graph.
Note that the system does not need to exhibit stochastic resonance
for these abrupt transitions and periods of long stability properties
to occur. Furthermore, it is worth noting that this behaviour is expected
by linearising the SDE around its the stationary points. In fact,
we anticipate multiplicative ergodic theorem of non-autonomous random
dynamical systems to rigorously explain this behaviour. Finally, we
note that this this abrupt transition behaviour has been observed
in the cyclical ice age climate change phenomena \cite{BenziStochRes,NicolisPeriodicForcing}.

\begin{figure}[h]
\centering{}\includegraphics[scale=0.5]{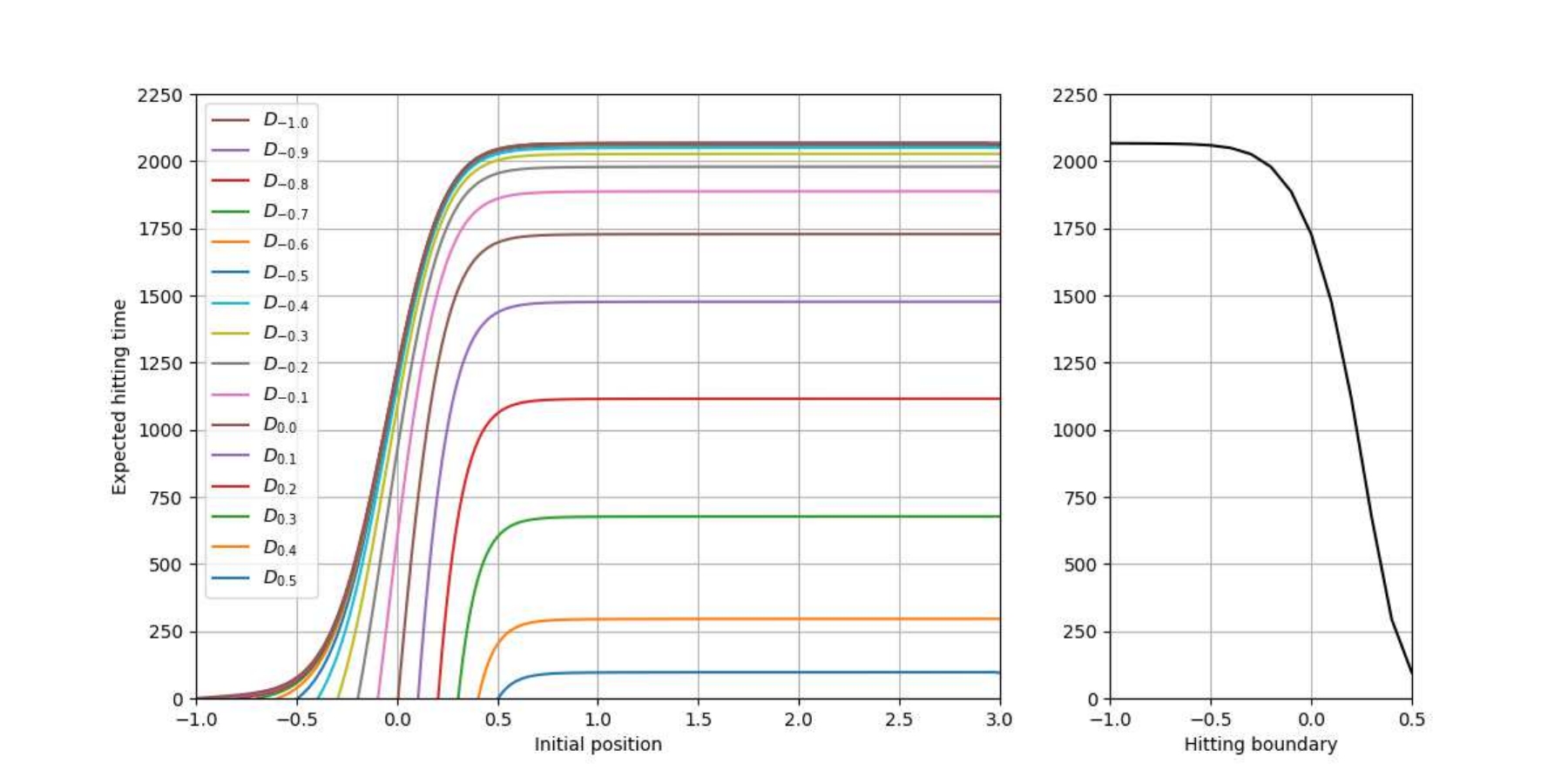} \caption{For SDE (\ref{eq:DuffingOsc}) with parameters $A=0.12,$ $\omega=0.001$,
$\sigma=0.285$, we solve and plot the associated PDE (\ref{eq:TimeReversedHomo_IBVP})
for expected transition time $\bar{\tau}_{0.285}^{\text{bfp}}(x)$
(left) and $\bar{\tau}_{0.285}^{\text{bfp}}(1)$ (right) for domains
$D_{b}=(b,3)$ for $b\in[-1,0.5]$.}
\label{Sharp transition}
\end{figure}

For the stochastic resonance problem, we consider also the transition
from the left well to the right well. Specifically, consider the SDE
\[
\begin{cases}
dY_{t}=\left[Y_{t}-Y_{t}^{3}+A\cos(\omega t)\right]dt+\sigma dW_{t}, & t\geq\frac{T}{2},\\
Y_{\frac{T}{2}}=y,
\end{cases}
\]
and define 
\[
\tau_{\sigma}^{L\rightarrow R}(y)=\inf_{t\geq\frac{T}{2}}\{Y_{t}\in\partial D|Y_{\frac{T}{2}}=y\}-\frac{T}{2},\quad y\in D_{L},
\]
where $D_{L}=(-\infty,1$) and noting that $s=\frac{T}{2}$. Clearly,
by a change of variables, $\tilde{Y}_{t}=-Y_{t+\frac{T}{2}}$ and
since $\cos(\omega(t+\frac{T}{2}))=-\cos(\omega t)$, we have that
\[
\begin{cases}
d\tilde{Y}_{t}=\left[\tilde{Y}_{t}-\tilde{Y}_{t}^{3}+A\cos(\omega t)\right]dt-\sigma d\tilde{W}_{t},\\
\tilde{Y}_{0}=-y,
\end{cases}
\]
where $\tilde{W}_{t}=W_{t+\frac{T}{2}}-W_{\frac{T}{2}}$. It follows
then that $\tau_{\sigma}^{L\rightarrow R}(y)=\inf_{t\geq0}\{\tilde{Y}_{t}\in\partial D|\tilde{Y}_{0}=-y\}$.
Note that $\tilde{W}$ and $W$ have the same distribution, hence
it follows that
\begin{equation}
\bar{\tau}_{\sigma}(x)=\bar{\tau}_{\sigma}^{L\rightarrow R}(-x),\quad x\in D.\label{eq:ExpectedDurationSymmetryOddDrift}
\end{equation}
Indeed the same computation holds provided the drift is an odd function
when $A=0$.

Specifically for SDE\textbf{ }(\ref{eq:DuffingOsc}) where $\omega=0.001$,
$T=2000\pi$ is the period. Given (\ref{eq:ExpectedDurationSymmetryOddDrift}),
it is sufficient to cast the stochastic resonance problem as finding
$\sigma_{*}\neq0$ such that
\begin{equation}
\bar{\tau}_{\sigma_{*}}(1)\simeq\frac{T}{2}=1000\pi.\label{eq:CastingStochRes}
\end{equation}
i.e. the expected transition time between the wells is half the period.

To fine tune for stochastic resonance, we repeatedly solve the same
PDE computations with the same numerical parameters and methods (as
for Figure \ref{SDE_vs_PDE_plot}), changing only $\sigma$ and considering
the expected transition time $\bar{\tau}_{\sigma}^{\text{\text{\text{grad}}}}(1)$
as a function of $\sigma$. We vary $\sigma$ in the $\sigma$-domain
$[0.2,1]$. We partition this $\sigma$-domain into two subintervals
$[0.2,0.3]$ and $[0.3,1]$ and uniformly partition them into 50 and
100 points respectively. As a function of $\sigma$, we plot the expected
transition time $\bar{\tau}_{\sigma}(1)$ in Figure \ref{StochResFineTune_plot}.

\begin{figure}[h]
\centering{}\includegraphics[scale=0.5]{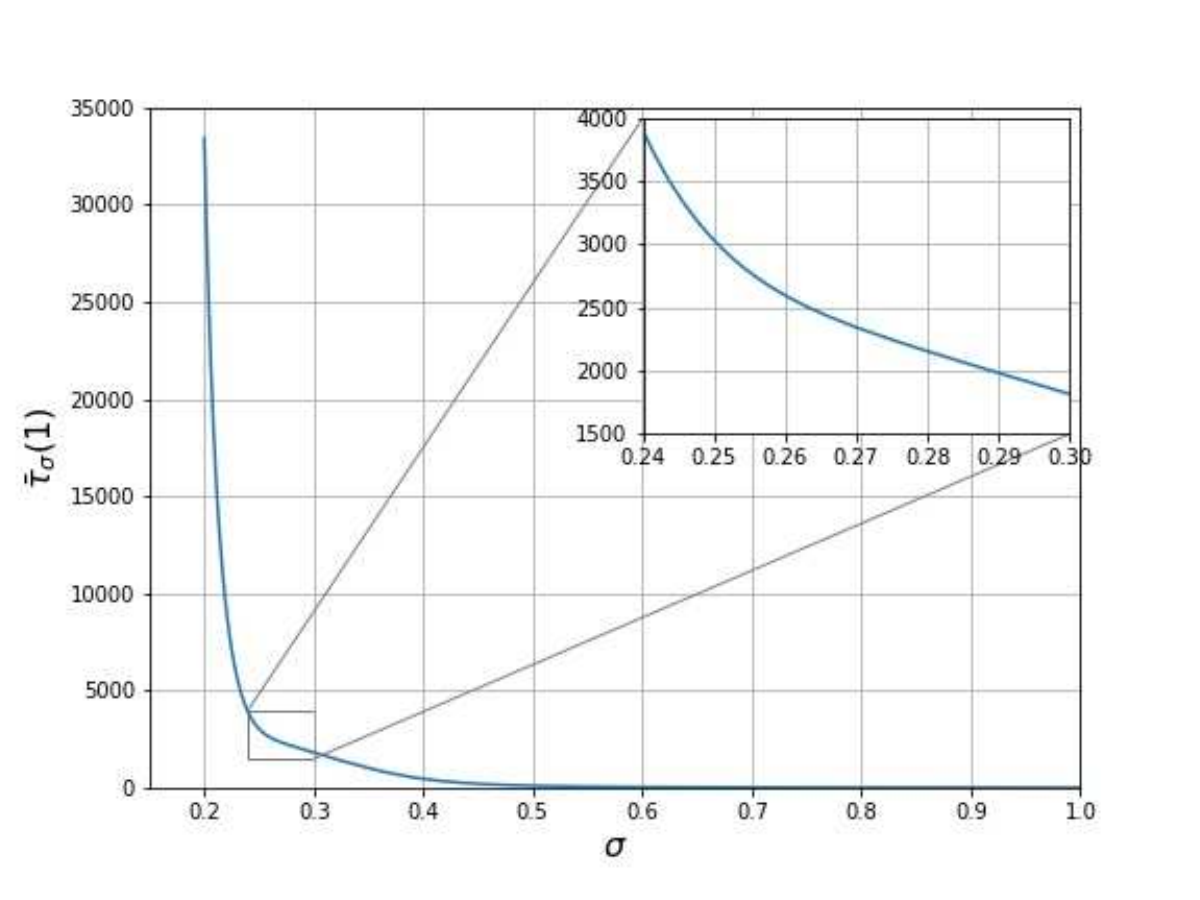}\caption{Plot of $\bar{\tau}_{\sigma}^{\text{\text{\text{grad}}}}(1)$ for
$\sigma\in[0.2,1]$ for SDE (\ref{eq:DuffingOsc}) with parameters
$A=0.12$ and $\omega=0.001$, $s=0$ and $\tilde{D}=(-1,3)$.}
\label{StochResFineTune_plot} 
\end{figure}

It can be seen from Figure \ref{StochResFineTune_plot} that (\ref{eq:CastingStochRes})
is satisfied for some $\sigma_{*}\in[0.245,0.25]$. We compute further
$\bar{\tau}_{\sigma}^{\text{\text{\text{grad}}}}(1)$ on a finer partition
of the interval $[0.245,0.25]$ further and tabulate its numerical
values in Table \ref{StochResFineTune_table}. Numerically, from Table
\ref{StochResFineTune_table}, it can be seen that $\bar{\tau}_{0.2485}^{\text{grad}}\simeq\frac{T}{2}=1000\pi$
to the nearest $5\cdot10^{-4}$.
\noindent \begin{flushleft}
\begin{table}
\begin{centering}
\begin{tabular}{|c|c|c|c|c|c|c|c|c|c|c|c|}
\hline 
$\sigma$  & 0.245  & 0.2455  & 0.246  & 0.2465  & 0.247  & 0.2475  & 0.248  & 0.2485  & 0.249  & 0.2495  & 0.25\tabularnewline
\hline 
$\bar{\tau}_{\sigma}^{\text{\text{\text{grad}}}}(1)$  & 3388  & 3346  & 3306  & 3267  & 3230  & 3194  & 3159  & \multicolumn{1}{c|}{3125} & 3093  & 3061  & 3030\tabularnewline
\hline 
\end{tabular}
\par\end{centering}
\caption{Computation of $\bar{\tau}_{\sigma}^{\text{\text{\text{grad}}}}(1)$
(4sf) on a finer partition of $\sigma\in[0.245,0.25].$}
\label{StochResFineTune_table}
\end{table}
\par\end{flushleft}

\section{Discussion}

In this paper, we rigorously derived parabolic PDE (\ref{eq:ExpectedDurationPDE_Periodic})
that the expected duration of time-periodic non-degenerate SDEs satisfies,
complete with time-periodic boundary conditions and assumptions on
the SDE coefficients. Furthermore, we proved that the PDE is well-posed
by casting the problem as a fixed point problem in Theorem \ref{thm:ExistenceUniquenessResult_Coercive}
as well as a convex optimisation problem in Theorem \ref{thm:ConvexThm}.
In Section \ref{sec:NumericalSolutionOfPde}, we provided how these
approaches can be applied numerically to solve PDE (\ref{eq:ExpectedDurationPDE_Periodic}).
In Section \ref{subsec:Stochastic-Resonance}, we apply our theory
and provide numerics to compute the expected duration time to switch
from the regime and how it varies with the change of the noise intensity.
This provides a fine-tuning method to find the noise intensity from
historical events in reality. While it works very well for stochastic
resonance problem, our theory is applicable to many real world and
scientific problems where random periodicity is ubiquitous and exit
duration is of important consideration.

Some conditions was assumed in this paper to simplify the exposition
and to focus on the main ingredients and techniques used to attain
the results given in this paper. For example, it was assumed the diffusion
coefficient of SDE (\ref{eq:GeneralSDE_NonAuton}) is non-degenerate.
We believe that this assumption can be relaxed along the lines of
(time-dependent) Hörmander's condition and possibly even the UFG (Uniformly
Finitely Generated) condition \cite{KS82}. Similarly, we anticipate
that the results in the paper can be extended to Lévy noise with a
different infinitesimal generator. Another example is that in Theorem
\ref{thm:ConvexThm} we posed the solution of the (\ref{eq:ExpectedDurationPDE_Periodic})
as convex optimisation problem in the $L^{2}(D)$ Hilbert space. We
note that Lemma \ref{lem:ConvexLemma} applies more generally to reflective
Banach spaces such as $L^{p}(D)$ for $1<p<\infty$. If Theorem \ref{thm:ConvexThm}
can be established for $L^{p}(D)$, by Sobolev embedding, this would
imply that (\ref{eq:ExpectedDurationPDE_Periodic}) is well-posed
for a wide range of systems and in higher dimensional systems.

On a broader view, there are a few different directions that this
paper opens up. For instance, in Section \ref{subsec:Stochastic-Resonance},
we implicitly assumed that the mapping $\sigma\rightarrow\bar{\tau}_{\sigma}(s,x)$
is continuous in the fine-tuning of $\sigma$ for the stochastic resonance
problem. While we expect this to be true, a rigorous proof would be
of interest. From a theory perspective, we recall that Theorem \ref{thm:ExpectedDurationPDE_Theorem}
can be regarded as an instance of time-periodic Feynman-Kac duality
in relating the expected duration of time-periodic SDEs to time-periodic
solution of a parabolic PDE. We anticipate using a similar approach
of this paper, other time-periodic Feynman-Kac dualities exist for
other quantities of time-periodic SDEs.

\bigskip{}

\textbf{Acknowledgements. }We would like to thank the referees for
their constructive comments which result in improvement of this paper.
We are grateful to David Sibley for his help with numerical schemes.
We would like to acknowledge the financial support of a Royal Society
Newton fund grant (ref. NA150344) and an EPSRC Established Career
Fellowship to HZ (ref. EP/S005293/1)

\end{document}